\newtheorem{thm}[equation]{Theorem}
\newtheorem{cor}[equation]{Corollary}
\newtheorem{lem}[equation]{Lemma}
\newtheorem{prop}[equation]{Proposition}
\theoremstyle{definition}
\newtheorem{defn}[equation]{Definition}
\theoremstyle{remark}
\newtheorem{rem}[equation]{Remark}
\newtheorem{exm}[equation]{Example}
\newcommand{\C}[1]{\mathscr{#1}}
\def\op{\mathrm{op}}
\newcommand{\operad}[1]{\mathrm{Op}(#1)}
\newcommand{\operadpc}[1]{\mathrm{Op}^{pc}(#1)}
\def\r{\rightarrow} % flecha -->
\def\l{\leftarrow} % flecha <--
\def\into{\rightarrowtail}
\def\onto{\twoheadrightarrow}
\newcommand{\id}[1]{\mathrm{id}_{#1}}
\newcommand{\Mod}[1]{\mathrm{Mod}(#1)}
\def\rmap{\operatorname{Map}}
\def\ner{\operatorname{Ner}}
\def\ho{\operatorname{Ho}}
\def\aut{\operatorname{Aut}}
\def\st{\stackrel} % abreviatura de \stackrel
\def\unit{\mathbb{I}} % abreviatura de \underline
\def\To{\longrightarrow}
\renewcommand{\ker}{\operatorname{Ker}}
\numberwithin{equation}{section}
\begin{document}

\title%[Homotopy theory of non-symmetric operads \dots]
{Homotopy units in $A$-infinity algebras}%
\author{Fernando Muro}%
\address{Universidad de Sevilla,
Facultad de Matem\'aticas,
Departamento de \'Algebra,
Avda. Reina Mercedes s/n,
41012 Sevilla, Spain}
\email{fmuro@us.es}
\urladdr{http://personal.us.es/fmuro}

%\thanks{The author was partially supported
%by the Andalusian Ministry of Economy, Innovation and Science under the grant FQM-5713, by the Spanish Ministry of Education and
%Science under the MEC-FEDER grant  MTM2010-15831, and by the Government of Catalonia under the grant SGR-119-2009.}
\subjclass{18D50, 18G55}
\keywords{Operad, $A$-infinity algebra, unit, model category, mapping space.}

\begin{abstract}
We show that the canonical map from the associative operad to the unital associative operad is a homotopy epimorphism for a wide class of symmetric monoidal model categories. As a consequence, the space of unital associative algebra structures on a given object  is up to homotopy a subset of connected components of the space of non-unital associative algebra structures.
\end{abstract}

\maketitle
%\tableofcontents

% ---------------------------------------------------------------------------------

\section{Introduction}

It is well known that monoids in a monoidal category, a.k.a.~algebras, may have at most one unit. Hence, being unital can be regarded as a property, rather than a structure. In other words, the set of unital monoid structures on a given object embeds as a subset of the set of non-unital monoid structures. This fact can be deduced from the following stronger and fancier statement. 

\begin{prop}\label{epicureo}
Given a closed symmetric monoidal category $\C V$ with an initial object, the canonical morphism $\phi^{\C V}\colon\mathtt{Ass}^{\C V}\r\mathtt{uAss}^{\C V}$ from the associative operad to the unital associative operad is an epimorphism in the category $\operad{\C V}$ of non-symmetric operads in $\C V$.
\end{prop}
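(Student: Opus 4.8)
The plan is to verify directly the defining property of an epimorphism in $\operad{\C V}$: given a non-symmetric operad $P$ in $\C V$ and two operad morphisms $f,g\colon\mathtt{uAss}^{\C V}\to P$ with $f\phi^{\C V}=g\phi^{\C V}$, I would show $f=g$. The first step is a reduction. Since $\mathtt{Ass}^{\C V}(n)=\mathtt{uAss}^{\C V}(n)=\unit$ for all $n\geq 1$ and $\phi^{\C V}$ is the identity on these components (whereas $\mathtt{Ass}^{\C V}(0)$ is the initial object of $\C V$), the hypothesis $f\phi^{\C V}=g\phi^{\C V}$ already forces $f$ and $g$ to agree in every arity $\geq 1$, so the whole problem reduces to proving the equality of the arity-$0$ components $f_0,g_0\colon\unit\to P(0)$. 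To ease notation I would write $m\colon\unit\to P(2)$ for the common arity-$2$ component, $e\colon\unit\to P(1)$ for the operad unit of $P$ (equivalently the common arity-$1$ component), $a:=f_0$, $b:=g_0$, and let $\circ_i$ stand for operadic insertion into the $i$-th slot, precomposed where necessary with the coherence isomorphisms of $\C V$ so that all expressions below are morphisms out of $\unit$.

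Next I would extract the unit relations. In $\mathtt{uAss}^{\C V}$ the binary generator, composed in either of its two inputs with the nullary generator, equals the unary generator --- this is nothing but the two unit axioms satisfied by a monoid. Applying $f$ and $g$, which preserve operadic composition and the operad unit, and unwinding the coherence isomorphisms, this yields the identities of morphisms $\unit\to P(1)$
\[
m\circ_1 a=e=m\circ_2 a,\qquad m\circ_1 b=e=m\circ_2 b.
\]
The final step is a cancellation, which is the operadic incarnation of the classical remark that a monoid has at most one unit. Using the operad axioms --- unitality of $e$ and the fact that insertions into distinct slots of $m$ commute --- I would compute the morphism obtained by plugging $a$ into the first slot and $b$ into the second slot of $m$ in the two possible orders, obtaining the chain of equalities of morphisms $\unit\to P(0)$
\[
a=e\circ_1 a=(m\circ_2 b)\circ_1 a=(m\circ_1 a)\circ_1 b=e\circ_1 b=b.
\]
Hence $f_0=g_0$, so $f=g$, and $\phi^{\C V}$ is an epimorphism.

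I expect the argument to be entirely formal: it should use nothing about $\C V$ beyond its being symmetric monoidal, with no need for closedness, (co)completeness or any model structure, so there is no deep obstacle. The only points requiring genuine care are (i) the bookkeeping in the unit-relations step, i.e.\ turning ``the two unit axioms of a monoid'' into honest equalities of morphisms out of $\unit$, which forces one to write out the structure maps of $\mathtt{uAss}^{\C V}$ together with the relevant unitor isomorphisms of $\C V$; and (ii) noticing in the cancellation step that one must \emph{not} try to cancel $m$ directly out of $m\circ_1 a=m\circ_1 b$ --- illegitimate for an arbitrary $P$ --- but instead route the argument through the two-sided expression $m(a,b)=(m\circ_2 b)\circ_1 a=(m\circ_1 a)\circ_1 b$, which is exactly what makes the cancellation valid.
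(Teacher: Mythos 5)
Your proof is correct and is essentially the paper's argument: the paper proves the statement for $\C V=\operatorname{Set}$ via exactly your cancellation chain (computing $(f(\mu)\circ_1 f(u))\circ_1 g(u)$ in two ways using the two unit relations and the interchange axiom $(a\circ_i b)\circ_j c=(a\circ_j c)\circ_{i+q-1}b$), and then observes that the general case follows either by applying the left adjoint $-\otimes\unit\colon\operatorname{Set}\to\C V$ or by translating the same computation into diagrams out of $\unit$ — which is precisely what you do. The only cosmetic difference is that you carry out the diagrammatic translation directly rather than routing through the $\operatorname{Set}$ case.
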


The canonical morphism $\phi^{\C V}$ models the forgetful functor from unital monoids to non-unital monoids.

If $\C V$ is also a model category, one is often more interested in homotopy algebra structures rather than strict algebra structures. This is because, given a monoid $M$ and a weak equivalence $\varphi\colon X\st{\sim}\r M$ in $\C V$, there need not be a monoid structure on $X$ compatible with $\varphi$, but there is always a compatible homotopy monoid structure on $X$, at least if $X$ is fibrant and cofibrant.

Homotopy  (unital) associative algebras are known as (unital) $A$-infinity algebras. They are formally defined as algebras over cofibrant resolutions of the operads $\mathtt{Ass}^{\C V}$ and $\mathtt{uAss}^{\C V}$. If $\C V=\operatorname{Top}$ is the category of topological spaces, there are nice resolutions of these operads given by associahedra \cite{hahs} and unital associahedra \cite{uass}. The cellular homology of (unital) associahedra yield resolutions for $\C V=\operatorname{Ch}(\Bbbk)$   the category of chain complexes over a commutative ring $\Bbbk$.

The strongest possible homotopical generalization of Proposition \ref{epicureo} is the following result, which is the main theorem of this paper. 

\begin{thm}\label{hepi}
Let $\C V$ be a simplicial or complicial closed symmetric monoidal model category. Assume that $\C V$ satisfies the monoid axiom and the strong unit axiom. Suppose further that $\C V$ is cofibrantly generated and has sets of generating (trivial) cofibrations with presentable sources. Then the morphism $\phi^{\C{V}}\colon \mathtt{Ass}^{\C{V}}\r \mathtt{uAss}^{\C{V}}$ is a homotopy epimorphism in $\operad{\C V}$.
\end{thm}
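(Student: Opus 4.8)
The plan is to compute the homotopy pushout of $\phi^{\C V}$ along itself and to show that its codiagonal onto $\mathtt{uAss}^{\C V}$ is a weak equivalence; I write $\mathtt{Ass},\mathtt{uAss},\phi$ for $\mathtt{Ass}^{\C V},\mathtt{uAss}^{\C V},\phi^{\C V}$. Under the stated hypotheses the forgetful functor from $\operad{\C V}$ to the category of $\mathbb N$-indexed collections creates a cofibrantly generated model structure; in particular a map of operads is a weak equivalence exactly when it is a weak equivalence of $\C V$ in every arity, and homotopy pushouts are computed by replacing a span by one whose backward leg is a cofibration between cofibrant objects. I recall that $\phi$ is a homotopy epimorphism precisely when the codiagonal $\mathtt{uAss}\amalg^{\mathbb L}_{\mathtt{Ass}}\mathtt{uAss}\to\mathtt{uAss}$ is a weak equivalence.

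First I would import explicit cofibrant resolutions. The simplicial (resp.\ complicial) enrichment supplies a symmetric monoidal left Quillen functor $F$ from $\operatorname{sSet}$ (resp.\ from chain complexes over $\ZZ$) to $\C V$ with $F$ of the point equal to $\mathbb{I}$. Applying $F$ arity-wise to Stasheff's operad of associahedra $\mathtt{Ass}_\infty$ \cite{hahs}, to the operad of unital associahedra $u\mathtt{Ass}_\infty$ \cite{uass}, to the canonical cell inclusion $j\colon\mathtt{Ass}_\infty\into u\mathtt{Ass}_\infty$, and to the projections onto $\mathtt{Ass}$ and $\mathtt{uAss}$, I obtain in $\operad{\C V}$ a commuting square whose vertical maps are cofibrant resolutions — each is, arity-wise, the image under $F$ of a weak equivalence of cofibrant simplicial sets or chain complexes, hence again a weak equivalence — whose top map $j$ is a cofibration between cofibrant operads, and whose bottom map is $\phi$. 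Here one also uses that $\mathtt{Ass}$ and $\mathtt{uAss}$ are themselves obtained from their set-level counterparts by applying $F$, as $F$ is a strong symmetric monoidal left adjoint.

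Consequently the homotopy pushout of $\phi$ along itself is modelled by the ordinary pushout $Q:=u\mathtt{Ass}_\infty\amalg_{\mathtt{Ass}_\infty}u\mathtt{Ass}_\infty$ in $\operad{\C V}$. Now observe that, besides its codiagonal onto $u\mathtt{Ass}_\infty$ induced by the pair $(\id{u\mathtt{Ass}_\infty},\id{u\mathtt{Ass}_\infty})$, the operad $Q$ carries the two structure inclusions $\iota_1,\iota_2\colon u\mathtt{Ass}_\infty\to Q$. Since the composite of $\iota_1$ with the codiagonal is the identity, and the codiagonal followed by the resolution $u\mathtt{Ass}_\infty\to\mathtt{uAss}$ is the canonical map $Q\to\mathtt{uAss}$, a double application of the two-out-of-three axiom reduces the theorem to the single assertion that $\iota_1\colon u\mathtt{Ass}_\infty\to Q$ is a weak equivalence. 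As $\iota_1$ is a pushout of the cofibration $j$ it is itself a cofibration, so the goal becomes to prove it a trivial cofibration; and by the reduction above it suffices to do so after replacing $\C V$ by $\operatorname{sSet}$ or by chain complexes over $\ZZ$, since $F$ preserves pushouts along cofibrations as well as weak equivalences between cofibrant objects.

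The remaining — and main — task is the explicit homotopical analysis of $\iota_1$, and this is where the real work lies. Unwinding the cellular presentation of $j$, the map $\iota_1$ exhibits $Q$ as the operad obtained from $u\mathtt{Ass}_\infty$ (one copy, containing a nullary unit operation $e_1$ together with the degeneracy cells of the unital associahedra that make $e_1$ a coherent homotopy unit) by freely adjoining a \emph{second} nullary operation $e_2$ and a second system of such degeneracy cells, now attached along boundaries that already involve $e_1$. I would filter $Q$ by the number of occurrences of $e_2$ and by cellular degree and show, arity by arity, that each newly attached cell is attached along a sphere already null-homotopic inside $u\mathtt{Ass}_\infty$ — intuitively, the coherent unit relations satisfied by $e_1$ allow one to contract every appearance of $e_2$, so that $e_2$ collapses onto $e_1$ up to coherent homotopy. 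Packaging this as an operadic, filtration-preserving deformation retraction of $Q$ onto $u\mathtt{Ass}_\infty$ — a genuine deformation retraction in the simplicial/topological case, and an explicit contracting chain homotopy built by induction over the filtration, for instance by acyclic models, in the complicial case — establishes that $\iota_1$ is a weak equivalence, and hence the theorem. Controlling the combinatorics of this normalization, i.e.\ reducing an arbitrary bracketed word in letters and two kinds of units to a standard form coherently in all arities, is the genuine obstacle; everything preceding it is formal.
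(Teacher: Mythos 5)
Your overall architecture --- characterize homotopy epimorphisms via the codiagonal of the homotopy pushout, compute that pushout with explicit (unital) associahedra resolutions in $\operatorname{Set}^{\Delta^{\op}}$ or $\operatorname{Ch}(\Bbbk)$, and transfer along the structure adjunction --- matches the paper's in outline, but the proof is incomplete exactly where you admit it is, and the model you chose makes that missing step harder than it needs to be. You take $Q=\mathtt{uA}_\infty\cup_{\mathtt{A}_\infty}\mathtt{uA}_\infty$, i.e.~an operad with \emph{two} homotopy-unit systems and no strict unit, and assert that a filtration-preserving deformation retraction onto one copy exists. That retraction is the entire content of the theorem, and no formula or inductive verification is given. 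The paper avoids this by using a one-sided model: it pushes out the cofibration $\mathtt{Ass}^{\C V}\into\mathtt{u}_\infty\mathtt{A}^{\C V}$ along $\phi^{\C V}$ itself (legitimate because the operads involved are pseudo-cofibrant and the gluing lemma holds in that cofibration category, Lemma \ref{basico}), so the resulting operad $\mathtt{u}_\infty\mathtt{uA}^{\C V}$ contains one \emph{strict} unit $u$ and one homotopy-unit system $\nu_n^S$. The strict unit is the engine of the explicit contracting homotopy $h(\nu_n^S)=(-1)^{\min S}\nu_{n+1}^{S+1}\circ_{\min S}u$ of Lemma \ref{gordo}; with two merely homotopy units, as in your $Q$, there is nothing to ``plug in'' and the coherence problem you would have to solve is genuinely more severe. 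Until you produce the retraction (or an equivalent acyclicity argument) your proof has no core.

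Two further gaps. First, for the simplicial case you appeal to ``a genuine deformation retraction in the simplicial/topological case''; the paper does \emph{not} construct one, precisely because it is delicate, and instead proves contractibility of $\mathtt{u}_\infty\mathtt{uA}^{\operatorname{Set}^{\Delta^{\op}}}(n)$ by combining simple connectivity (transported from the groupoid computation of Section \ref{grd} via $\Pi_1$) with acyclicity (transported from simplicial $\mathbb Z$-modules, hence ultimately from the DG computation via Dold--Kan). So your plan silently requires a second hard construction that the paper deliberately circumvents, and the groupoid case --- which you omit entirely --- is what makes the circumvention work. Second, your transfer step assumes the structure functor $F$ is strong monoidal, that applying it arity-wise sends cofibrant operads to cofibrant operads, and that $F$ of the resolution $\mathtt{uA}_\infty\st{\sim}\r\mathtt{uAss}$ is still a weak equivalence. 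The last point is false in the generality of the theorem: $\mathtt{uAss}^{\C V}(n)=\unit$ need not be cofibrant, the adjunction need only be weak monoidal, and $F$ only preserves weak equivalences between cofibrant objects. This is exactly what the strong unit axiom and the pseudo-cofibrant/$\unit$-cofibrant axioms of \cite{htnso2} (Proposition \ref{trans2}) are there to repair; your argument as written does not use the strong unit axiom anywhere, which is a sign that the transfer has not actually been justified under the stated hypotheses.
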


This means that taking derived mapping spaces in the model category $\operad{\C V}$ \cite{htnso} out of $\phi^{\C{V}}$,
\begin{equation*}%\label{mapa}
(\phi^{\C V})^{*}\colon \rmap_{\operad{\C V}}( \mathtt{uAss}^{\C{V}}, \mathcal O)
\To
\rmap_{\operad{\C V}}( \mathtt{Ass}^{\C{V}}, \mathcal O),
\end{equation*}
is essentially an inclusion of connected components for any operad $\mathcal O$, i.e.~
an injection on $\pi_0$ and an isomorphism in all  homotopy groups $\pi_n$, $n>0$, with all possible base points.
Putting $\mathcal O=\mathtt{End}_{\C V}(X)$, the endomorphism operad of an object $X$ in $\C V$, we deduce that the homotopical moduli space \cite{rezkphd} of unital $A$-infinity algebra structures on $X$ embeds as a subset of connected components of the homotopical moduli space of all $A$-infinity algebra structures on $X$. In \cite{manso} we go beyond, showing that if $X$ is perfect then $\phi^{\C{V}}$ induces  an affine Zariski open immersion of geometric moduli spaces in many homotopical algebraic geometry contexts, including derived, complicial, and brave new algebraic geometry.

A friendly characterization of the image of the injective map
$\pi_0(\phi^{\C V})^{*}$ when $\mathcal O=\mathtt{End}_{\C V}(X)$ is an endomorphism operad is possible for many $\C{V}$'s thanks to results of Lyubashenko--Manzyuk and Lurie, see Remarks \ref{laim1} and \ref{laim2}. 

Let us comment on the hypotheses of Theorem \ref{hepi}. A symmetric monoidal model category $\C V$ \cite{ammmc} is \emph{simplicial} if it is equipped with a symmetric monoidal Quillen adjunction from the category of simplicial sets,
$$\xymatrix{{\operatorname{Set}^{\Delta^{\op}}}\ar@<.5ex>[r]^-{F}&{\C{W}}.\ar@<.5ex>[l]^-{G}}$$
The upper arrow will always be the left adjoint in this kind of diagram. Similarly, 
$\C V$ is \emph{complicial} if it is equipped with a symmetric monoidal Quillen adjunction
$$\xymatrix{\operatorname{Ch}(\Bbbk)\ar@<.5ex>[r]^-{F}&{\C{W}}.\ar@<.5ex>[l]^-{G}}$$
The strong unit axiom, introduced in \cite[Definition A.9]{htnso2}, says that tensoring with a cofibrant replacement $\tilde\unit$ of the tensor unit $\unit$ preserves all weak equivalences. This obviously holds if $\unit$ is cofibrant, but it is also true in many other cases of interest, such as diagram spectra with the positive stable model structure \cite{mcds}. The rest of the hypotheses are needed to have a model structure on $\operad{\C V}$ with fibrations and weak equivalences defined as in $\C V$, see \cite[Theorem 1.1]{htnso}.

The paper is structured as follows. Section \ref{hee} studies homotopy epimorphisms in arbitrary model categories. In Section \ref{operads} we recall what we need about operads and their homotopy theory. Sections \ref{grd} and \ref{DG} contain the proof of Theorem \ref{hepi} for two special categories $\C V$: groupoids and unbounded complexes over a commutative ring. In the last section, Section \ref{transavia}, we deduce the main theorem from these two specific cases. %Section \ref{cylinder} deals with homotopies of morphisms between DG-operads in a very explicit way. This material is crucially used in Section \ref{DG}. It will be of further application in future work on DG-operads.

We assume the reader familiarity with category theory and abstract homotopy theory. Some standard references are \cite{cwm2,hmc, hirschhorn}. For monoidal categories, functors, and adjunctions, we refer to \cite[Chapter 3]{mfsha}.

\subsection*{Acknowledgements}

A previous version of this paper only contained Theorem~\ref{hepiDG}, with a substantially more complicated proof. Lecturing about this result at the Homotopical Algebra
Summer Day in Barcelona 2012, I realised of the possibility of simplifying the proof, as it is given in Section \ref{DG}. The simplification needs the results in \cite{htnso2}, which are of independent interest. I'm grateful to the organizers of that Summer Day, Imma G\'alvez and Javier Guti\'errez, for providing such an inspiring environment. The results in \cite{htnso2} also alowed me to extend Theorem \ref{hepiDG} to a wide class of model categories, see Theorem \ref{hepi}. I wished to do this since I obtained the first proof of Theorem \ref{hepiDG}, and I'm grateful to Joe Hirsh for encouraging me to do so during the Summer Day.

I was partially supported
by the Andalusian Ministry of Economy, Innovation and Science under the grant FQM-5713, by the Spanish Ministry of Education and
Science under the MEC-FEDER grant  MTM2010-15831, and by the Government of Catalonia under the grant SGR-119-2009.

\section{Homotopy epimorphisms}\label{hee}

Recall that a morphism $f\colon X\r Y$ in a category $\C{C}$ is an epimorphism if $\C{C}(f,Z)\colon \C{C}(Y,Z)\r \C{C}(X,Z)$ is an injective map for any object $Z$ in $\C{C}$. The following characterization of epimorphisms is well known and easy to check.  

\begin{prop}\label{epi}
Let $f\colon X\r Y$ be a morphism in a category $\C{C}$. Assume the push-out
$$\xymatrix{X\ar[r]^f\ar[d]_f\ar@{}[rd]|{\text{push}}&Y\ar[d]^{i_2}\\Y\ar[r]_-{i_1}&Y\cup_XY}$$
exists. The following statements are equivalent:
\begin{enumerate}
 \item $f$ is an epimorphism.
\item $i_1$ is an isomorphism.
\item $i_2$ is an isomorphism.
\item The \emph{codiagonal} $\nabla=(1_Y,1_Y)\colon Y\cup_XY\r Y$ is an isomorphism.
\end{enumerate}
If they hold, then $i_1=i_2=\nabla^{-1}$.
\end{prop}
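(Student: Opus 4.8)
The plan is to route all four conditions through the auxiliary equality $i_1=i_2$, which will turn out to be equivalent to each of them. So I would first prove that $f$ is an epimorphism if and only if $i_1=i_2$. The forward direction is immediate: the commutativity of the push-out square gives $i_1f=i_2f$, so if $f$ is epi then $i_1=i_2$. For the converse, given $g,g'\colon Y\r Z$ with $gf=g'f$, the pair $(g,g')$ satisfies exactly the compatibility condition required by the universal property of $Y\cup_XY$, hence there is $h\colon Y\cup_XY\r Z$ with $hi_1=g$ and $hi_2=g'$; if $i_1=i_2$ then $g=g'$, so $f$ is epi.

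Next I would show that $i_1=i_2$ forces $i_1$ (hence $i_2$) to be an isomorphism with inverse the codiagonal $\nabla$. One relation is free: $\nabla i_1=1_Y$. For the other, observe that both $i_1\nabla$ and $1_{Y\cup_XY}$, when composed with $i_1$ and with $i_2$, yield $i_1$ and $i_2$ respectively (using $i_1=i_2$ together with $\nabla i_1=\nabla i_2=1_Y$); the uniqueness clause of the universal property of the push-out then gives $i_1\nabla=1_{Y\cup_XY}$. Thus $i_1=i_2$ implies (2), (3), (4) and the final equalities $i_1=i_2=\nabla^{-1}$.

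Finally I would close the loop by checking that each of (2), (3), (4) implies $i_1=i_2$. If $\nabla$ is an isomorphism, cancelling $\nabla$ in $\nabla i_1=1_Y=\nabla i_2$ gives $i_1=i_2=\nabla^{-1}$. If $i_1$ is an isomorphism, put $u=i_1^{-1}i_2\colon Y\r Y$; then $u=\nabla i_1u=\nabla i_2=1_Y$, so $i_2=i_1$, and symmetrically if $i_2$ is an isomorphism. Together with the first step this yields all the stated equivalences. The argument is entirely formal; the only point needing a little care is that the step $i_1=i_2\Rightarrow i_1\nabla=1$ genuinely uses the uniqueness, not merely the existence, part of the push-out universal property, and that the cancellations above are performed only against bona fide isomorphisms — so no step presents a real obstacle.
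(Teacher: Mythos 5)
Your argument is correct and complete. The paper states this proposition without proof, calling it ``well known and easy to check,'' so there is no in-text argument to compare against; what you give is the standard verification, cleanly organized by showing each of (1)--(4) equivalent to the single condition $i_1=i_2$. All the individual steps check out: the use of the uniqueness clause of the push-out to get $i_1\nabla=1_{Y\cup_XY}$ from $i_1=i_2$, and the cancellations against genuine isomorphisms in the reverse implications, are exactly right.
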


The strongest homotopy invariant property which generalizes the notion of injective map is the following one.

\begin{defn}\label{hess}
A map $g\colon K\r L$ between simplicial sets is a \emph{homotopy monomorphism} if it gives rise to an injection on connected components,
$$\pi_0(g)\colon\pi_0(K)\hookrightarrow\pi_0 (L),$$
and isomorphisms on homotopy groups for all possible base points $x\in K_{0}$,
$$\pi_n(g)\colon\pi_n(K,x)\st{\cong}\To\pi_n (L,g(x)),\qquad n\geq1.$$
\end{defn}

There are other obvious characterizations of homotopy monomorphisms of simplicial sets.

\begin{lem}
Given a map $g\colon K\r L$ between simplicial sets, the following statements are equivalent:
\begin{enumerate}
\item $g$ is a homotopy monomorphism.
\item $g$ corestricts to a weak equivalence between $K$ and a subset of connected components of $L$.
\item For any $x\in K_{0}$, the homotopy fiber of $g$ at $g(x)$ is contractible.
\item The homotopy fibers of $g$ are empty or contractible.
\end{enumerate}
\end{lem}

When we say that a simplicial set is \emph{contractible} we mean that it is weakly equivalent to a point. The usual terminology is `weakly contractible' but we prefer to shorten it.

There are also less obvious characterizations along the lines of the dual of Proposition \ref{epi}.

\begin{prop}\label{toenspace}
Let $g\colon K\onto L$ be a Kan fibration between Kan complexes. Consider the pull-back square
$$\xymatrix{K\times_{L}K\ar@{->>}[r]^-{p_{2}}\ar@{->>}[d]_-{p_{1}}\ar@{}[rd]|{\text{pull}}&K\ar@{->>}[d]^-{g}\\K\ar@{->>}[r]_-{g}&L}$$
The following statements are equivalent:
\begin{enumerate}
 \item $g$ is a homotopy monomorphism.
\item $p_1$ is a weak equivalence.
\item $p_2$ is a  weak equivalence.
\item The diagonal $\Delta=\binom{1_{K}}{1_{K}}\colon K\r K\times_{L}K$ is a weak equivalence.
\end{enumerate}
If they hold, then $p_1=p_2=\Delta^{-1}$  in the homotopy category of simplicial sets.
\end{prop}

\begin{proof}
Since $p_{j}\Delta=1_{K}$, $j=1,2$, the equivalences $(2)\Leftrightarrow(3)\Leftrightarrow(4)$ and the final statement are clear.

In order to show $(1)\Leftrightarrow(2)$, notice that parallel arrows in the square of the statement have essentially the same fibers. More precisely, if $F_x$ denotes the fiber of $p_1$ over a base point $x\in K_{0}$, then $F_x$ is isomorphic to the fiber of $g$ over $g(x)$.  The map $\pi_{0}(p_{1})$ is surjective since $p_{1}\Delta=1_{K}$. Therefore, by the long exact sequence in homotopy groups, $F_x$ is contractible for any $x\in K_{0}$ if and only if $p_1$ is a weak equivalence.
\end{proof}

This proposition is actually useful to characterize when an arbitrary morphism $g$ of simplicial sets is a homotopy monomorphism, since this property is homotopy invariant, so we can replace $g$ by a weakly equivalent morphism which is a Kan fibration between Kan complexes.

We now define homotopy epimorphisms in model categories via mapping spaces and homotopy monomorphisms of simplicial sets.  This definition is dual to the notion of homotopy monomorphism in \cite{htdgcat}.

\begin{defn}\label{he}
A morphism $f\colon X\rightarrow Y$ in a model category $\C{M}$ is said to be a \emph{homotopy epimorphism} if for any object $Z$ in $\C{M}$, the induced morphism on derived mapping spaces,
$$f^*=\rmap_{\C{M}}(f,Z)\colon\rmap_{\C{M}}(Y,Z)\To \rmap_{\C{M}}(X,Z),$$
is a homotopy monomorphism of simplicial sets.
\end{defn}

\begin{rem}
This definition is compatible with Definition \ref{hess}, i.e.~a morphism $g\colon K\r L$ of simplicial sets is a homotopy monomorphism in the sense of Definition \ref{hess} if and only if it is a homotopy epimorphism in the opposite of the model category of simplicial sets in the sense of Definition \ref{he}. This follows from Proposition \ref{toenspace}.% and \ref{toen} below.
\end{rem}

The construction of derived mapping spaces we have in mind is the simplicial set $$\rmap_{\C{M}}(X,Z)=\C{M}(\tilde X, Z_\bullet),$$ where $\tilde X$ is a cofibrant resolution of $X$ and $Z_\bullet$ is a simplicial resolution of $Z$. In particular, $f^*=\rmap_{\C{M}}(f,Z)={\C{M}}(\tilde f,Z_\bullet)$, where $\tilde f\colon \tilde X\r\tilde Y$ is a lifting of $f$ to cofibrant resolutions of $X$ and $Y$, 
$$\xymatrix{\tilde{X}\ar[r]^{\tilde{f}}\ar[d]_\sim&\tilde{Y}\ar[d]^\sim\\X\ar[r]_f&Y}$$
It is usual to require cofibrant resolutions $\tilde X\st{\sim}\r X$ to be trivial fibrations from a cofibrant object. However, for us it is enough to have a weak equivalence with cofibrant source.

Notice that $f\colon X\r Y$ being a homotopy epimorphism only depends on the image of $f$ in the homotopy category $\ho\C M$. Actually, it only depends on the isomorphism class of $f$ in $\ho\C M$.

The following result characterizes homotopy epimorphisms along the lines of Proposition \ref{epi}. The dual caracterization of homotopy monomorphisms was noticed in \cite{htdgcat}. 

\begin{prop}\label{toen}
Let $f\colon X\into Y$ be a cofibration between cofibrant objects in a model category $\C{M}$. Consider the push-out square
$$\xymatrix{X\ar@{ >->}[r]^f\ar@{ >->}[d]_f\ar@{}[rd]|{\text{push}}&Y\ar@{ >->}[d]^{i_2}\\Y\ar@{ >->}[r]_-{i_1}&Y\cup_XY}$$
The following statements are equivalent:
\begin{enumerate}
 \item $f$ is a homotopy epimorphism.
\item $i_1$ is a weak equivalence.
\item $i_2$ is a  weak equivalence.
\item The codiagonal $\nabla$ is a weak equivalence.
\end{enumerate}
If they hold, then $i_1=i_2=\nabla^{-1}$  in $\ho\C{M}$.
\end{prop}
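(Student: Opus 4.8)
The plan is to treat the equivalences (2) $\Leftrightarrow$ (3) $\Leftrightarrow$ (4) and the final clause as formalities, and to reduce the essential equivalence (1) $\Leftrightarrow$ (4) to a statement about maps of simplicial sets. For the formalities: the pushout square is symmetric under interchanging the two copies of $Y$, so $i_1$ is a weak equivalence if and only if $i_2$ is; and since $\nabla i_1 = 1_Y = \nabla i_2$, the two-out-of-three axiom shows that $i_1$ (equivalently $i_2$) is a weak equivalence if and only if $\nabla$ is. When this holds, $\nabla$, $i_1$, $i_2$ all become isomorphisms in $\ho\C M$, and $i_1 = i_2 = \nabla^{-1}$ there is read off from $\nabla i_j = 1_Y$.

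For (1) $\Leftrightarrow$ (4) I would first observe that, since $f$ is a cofibration between cofibrant objects, $i_1$ and $i_2$ are cofibrations and $X$, $Y$, $Y\cup_X Y$ are all cofibrant. Hence, for any test object $Z$ and any simplicial resolution $Z_\bullet$ of $Z$, the construction $\rmap_{\C M}(-, Z) = \C M(-, Z_\bullet)$ applies directly to $X$, $Y$, $Y\cup_X Y$ without cofibrant replacement, and applying the contravariant functor $\C M(-, Z_\bullet)$ to the defining pushout square yields the strict pullback of simplicial sets
\[
\rmap_{\C M}(Y\cup_X Y, Z)\;\cong\;\rmap_{\C M}(Y, Z)\times_{\rmap_{\C M}(X, Z)}\rmap_{\C M}(Y, Z),
\]
in which $\C M(f, Z_\bullet)$ and $\C M(i_j, Z_\bullet)$ are Kan fibrations between Kan complexes (a cofibration tested against a simplicial resolution). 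Therefore this is a homotopy pullback, with $i_1^{*}$ and $i_2^{*}$ its two projections and $\nabla^{*}$ the diagonal of $\rmap_{\C M}(Y, Z)$ over $\rmap_{\C M}(X, Z)$.

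The core is then an elementary lemma about a map $g\colon A\r B$ between Kan complexes: the diagonal $A\r A\times_B^{h}A$ is a weak equivalence $\iff$ a projection $A\times_B^{h}A\r A$ is a weak equivalence $\iff$ $\pi_0 g$ is injective and $\pi_n g$ is an isomorphism at every basepoint of $A$ for every $n\geq 1$ --- which is exactly the condition in Definition \ref{he}. The first equivalence is again two-out-of-three, each projection retracting the diagonal. For the second, I would write $A\times_B^{h}A = (A\times A)\times_{B\times B}^{h}B$ and compare the diagonal $A\r A\times_B^{h}A$ with $A\r A\times A$ as a map of fibrations over $A\times A$: over a pair $(a_0, a_1)$ it induces on homotopy fibres the comparison map from the space of paths in $A$ from $a_0$ to $a_1$ to the space of paths in $B$ from $g(a_0)$ to $g(a_1)$. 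This is a weak equivalence for all $(a_0, a_1)$ precisely under the stated conditions --- if $a_0$ and $a_1$ lie in the same component of $A$ the source is a loop space and matching higher homotopy groups is what is needed, and if they lie in different components the source is empty, forcing $g(a_0)$ and $g(a_1)$ into different components, i.e.\ $\pi_0 g$ injective. This homotopy-fibre bookkeeping is the one place where genuine work is required, and I expect it to be the main obstacle, though it is routine.

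It remains to apply the lemma to $g = f^{*}\colon\rmap_{\C M}(Y, Z)\r\rmap_{\C M}(X, Z)$, for which $A\times_B^{h}A = \rmap_{\C M}(Y\cup_X Y, Z)$ by the previous paragraph and the diagonal is $\nabla^{*}$. If (4) holds, then $\nabla$, being a map between cofibrant objects, induces a weak equivalence $\nabla^{*} = \rmap_{\C M}(\nabla, Z)$ for every $Z$; the lemma then yields that $f^{*}$ is injective on $\pi_0$ and an isomorphism on all higher homotopy groups for every $Z$, i.e.\ (1) holds. Conversely, if (1) holds, the lemma gives that $\nabla^{*} = \rmap_{\C M}(\nabla, Z)$ is a weak equivalence for every $Z$; taking $\pi_0$ shows $\ho\C M(\nabla, Z) = \pi_0\rmap_{\C M}(\nabla, Z)$ is a bijection for every $Z$, so $\nabla$ is an isomorphism in $\ho\C M$ by the Yoneda lemma, hence a weak equivalence in $\C M$, which is (4).
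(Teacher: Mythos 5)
Your proposal is correct and takes essentially the same route as the paper: both apply $\C{M}(-,Z_\bullet)$ to the pushout to obtain a pullback of Kan fibrations between Kan complexes and then reduce the homotopy-epimorphism condition to contractibility of the relevant homotopy fibres via the long exact sequence. The only (cosmetic) difference is that the paper targets condition (2), identifying the fibres of $i_1^*$ with those of $f^*$ and using surjectivity of $\pi_0 i_1^*$, whereas you target (4) via the diagonal into the homotopy pullback and close the converse with a Yoneda argument on $\pi_0$.
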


\begin{proof}
Since $\nabla i_j=1_Y$, $j=1,2$, the equivalences $(2)\Leftrightarrow(3)\Leftrightarrow(4)$ and the final statement are clear.

% show that $f$ is a homotopy epimorphism if and only if $i_1$ is a weak equivalence.
 %We have a homotopy push-out square in $\C{M}$
%$$\xymatrix{X\ar[r]^f\ar[d]_f\ar@{}[rd]|{\text{htpy. push}}&Y\ar[d]&\\Y\ar[r]&Y\cup_X^hY}$$ which is given by the cofibrant replacement of $f$ and the push-out
If we apply $\rmap_{\C{M}}(-,Z)=\C{M}(-, Z_\bullet)$ to the  push-out in the statement, we obtain a pull-back of simplicial sets consisting of Kan fibrations between Kan complexes,
$$\xymatrix{{\C{M}}( X,Z_{\bullet})\ar@{<<-}[r]^-{f^*}\ar@{<<-}[d]_-{f^*}\ar@{}[rd]|{\text{pull}}&{\C{M}}( Y,Z_{\bullet})\ar@{<<-}[d]^-{i_2^*}&\\{\C{M}}( Y,Z_{\bullet})\ar@{<<-}[r]_-{i_1^*}&{\C{M}}(Y\cup_XY,Z_{\bullet})}$$
Hence  $(1)\Leftrightarrow(2)$ follows from Proposition \ref{toenspace} and the fact that $i_1^*$ is a weak equivalence of simplicial sets  for all objects $Z$ in $\C M$ if and only if $i_1$ is a weak equivalence in~$\C{M}$.
\end{proof}

\begin{rem}\label{posttoen}
Proposition \ref{toen} is actually useful to check whether any morphism in $\C M$ is a homotopy epimorphism. A morphism $f\colon X\r Y$ in $\C M$ is a homotopy epimorphism if and only if a cofibrant resolution $\tilde f\colon\tilde  X\into\tilde  Y$ of $f$ is. Such a cofibrant resolution is a cofibration between cofibrant objects fitting into a commutative diagram
$$\xymatrix{\tilde{X}\ar@{ >->}[r]^{\tilde{f}}\ar[d]_\sim&\tilde{Y}\ar[d]^\sim\\X\ar[r]_f&Y}$$
%One often requires the vertical maps to be not only weak equivalences but trivial fibrations. This is not necessary for the purposes of this paper.

The statements (2), (3) and (4) in Proposition \ref{toen} only depend on
the isomorphism class of the commutative square
$$\xymatrix{\tilde X\ar@{ >->}[r]^{\tilde f}\ar@{ >->}[d]_{\tilde f}\ar@{}[rd]|{\text{push}}&\tilde Y\ar@{ >->}[d]^{i_2}\\\tilde Y\ar@{ >->}[r]_-{i_1}&\tilde Y\cup_{\tilde X}\tilde Y}$$
in the homotopy category $\ho(\C M^{\square})$ of commutative squares in $\C M$. The isomorphism class of this square 
only depends on the isomorphism class of $f\colon X\r Y$ in $\ho\C M$. Actually, it can be constructed using  the derivator $\mathbb{D}\C M$ of $\C M$, which consists of all homotopy categories of diagrams in $\C M$ with the shape of a finite direct category, such as $\square$, $\bullet\r \bullet$, or $\bullet\l\bullet\r \bullet$, see \cite{ciscd}. A category is finite and direct if its nerve has finitely-many non-degenerate simplices. Let $\operatorname{Cat}$ be the category of categories and functors and  $\operatorname{Dirf}\subset \operatorname{Cat}$ the full  subcategory of finite direct categories. The derivator $\mathbb{D}\C M$ is the $2$-functor,
\begin{align*}
\mathbb{D}\C M\colon\operatorname{Dirf}^{\op}&\To\operatorname{Cat},\\
I&\;\mapsto\;\ho(\C M^{I^{\op}}).
\end{align*}
Apparently, there is a problem here with the size of $\operatorname{Cat}$. Morphism `sets' in $\operatorname{Cat}$ may be proper classes. Nevertheless, there is really no trouble, since morphism sets in $\operatorname{Dirf}$ are honest sets, so the derivator $\mathbb{D}\C M$ is insensitive to the problems of $\operatorname{Cat}$.

Let $\mathbb D\colon \operatorname{Dirf}^{\op}\r\operatorname{Cat}$ be an abstract derivator, more precisely, a right derivator  satisfying \cite[Der 5]{ciscd}, i.e.~\cite[Der 5]{ktdt}. If $e$ denotes the category with only one object and one morphism (the identity), one can give a definition of homotopy epimorphism in $\mathbb D(e)$ along the lines of (2), (3) and (4) above, extending the notion of homotopy epimorphism in $\mathbb D\C M(e)=\ho\C M$. Homotopy epimorphisms are preserved by cocontinuous morphisms of right derivators, in particular by equivalences of derivators. This observation yields a quick justification for the following corollary. The first part also follows easily from the elementary properties of mapping spaces. 
\end{rem}

\begin{cor}\label{trans1}
Let $F\colon \C M\rightleftarrows\C N\colon G$ be a Quillen adjunction between model categories and let $\mathbb L F\colon\ho \C M\rightleftarrows\ho\C N\colon \mathbb R G$ be the derived adjoint pair between homotopy categories. The functor $\mathbb L F$ preserves homotopy epimorphisms. Moreover, if $F\dashv G$ is a Quillen equivalence then $\mathbb R G$ also preserves homotopy epimorphisms. Furthermore, if $\mathbb L F$ reflects isomorphisms then it also reflects homotopy epimorphisms.
\end{cor}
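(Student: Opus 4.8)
The plan is to reduce everything to the characterization of homotopy epimorphisms given in Proposition \ref{toen}, namely that $f$ is a homotopy epimorphism if and only if the codiagonal of its homotopy push-out with itself is a weak equivalence. First I would recall that, by Remark \ref{posttoen}, whether a morphism is a homotopy epimorphism depends only on its isomorphism class in $\ho\C M$, and is detected by applying the derivator $\mathbb D\C M$: one forms the homotopy push-out square of $f$ along itself inside $\ho(\C M^\square)$ and checks that the codiagonal $\nabla$ is an isomorphism there. So the statement to prove is that $\mathbb L F$ sends such a homotopy push-out square with invertible codiagonal to another one of the same kind.

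The first key step is that $\mathbb L F$, being a left derived functor of a left Quillen functor, preserves homotopy push-outs: concretely, if $\tilde f\colon\tilde X\into\tilde Y$ is a cofibrant resolution of $f$ as in Remark \ref{posttoen}, then applying $F$ to the honest push-out square of $\tilde f$ along itself yields, since $F$ is a left adjoint and preserves cofibrations, a push-out square of cofibrations between cofibrant objects in $\C N$; this square computes the homotopy push-out of $\mathbb L F(f)=[F\tilde f]$ along itself. The second key step is simply that $F$, being a functor, takes the codiagonal $\nabla\colon \tilde Y\cup_{\tilde X}\tilde Y\r\tilde Y$ to the codiagonal of the image square, because $\nabla$ is characterized by $\nabla i_1=\nabla i_2=1_{\tilde Y}$, an identity preserved by any functor. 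Combining these, if $\nabla$ is a weak equivalence in $\C M$ then $F\nabla$ is the codiagonal of the push-out of $F\tilde f$ along itself and is a weak equivalence (a left Quillen functor preserves weak equivalences between cofibrant objects, by Ken Brown's lemma), so by Proposition \ref{toen} applied in $\C N$, $F\tilde f$, and hence $\mathbb L F(f)$, is a homotopy epimorphism.

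For the second assertion, when $F\dashv G$ is a Quillen equivalence the derived functors $\mathbb L F$ and $\mathbb R G$ are mutually inverse equivalences of categories $\ho\C M\simeq\ho\C N$. Since being a homotopy epimorphism depends only on the isomorphism class in the homotopy category and is manifestly invariant under any equivalence of categories that is compatible with homotopy push-outs (an equivalence of model categories induces an equivalence of derivators, which is in particular cocontinuous, as noted in Remark \ref{posttoen}), $\mathbb R G$ carries homotopy epimorphisms in $\C N$ to homotopy epimorphisms in $\C M$. Alternatively, one can argue directly: given a homotopy epimorphism $g$ in $\C N$, the morphism $\mathbb L F\,\mathbb R G(g)\cong g$ is a homotopy epimorphism, and since $\mathbb L F$ reflects homotopy epimorphisms along a Quillen equivalence (being an equivalence of categories), $\mathbb R G(g)$ is one too.

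I do not expect a genuine obstacle here; the only point requiring a little care is the compatibility of $\mathbb L F$ with homotopy push-outs, i.e.\ justifying that the naive push-out of a cofibrant resolution computes the homotopy push-out after applying $F$ — but this is exactly the content of the fact that a left Quillen functor preserves cofibrations and cofibrant objects, so the square of cofibrant objects one writes down is automatically ``homotopy meaningful''. Everything else is formal manipulation at the level of $\ho\C M$ and the derivator, exactly as sketched in Remark \ref{posttoen}.
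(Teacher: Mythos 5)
Your argument is correct and follows essentially the same route the paper uses to justify this corollary, namely the characterization of homotopy epimorphisms via the (homotopy) push-out square of Proposition \ref{toen} together with the observation in Remark \ref{posttoen} that this characterization lives in the derivator and is preserved by cocontinuous morphisms, in particular by the equivalence of derivators induced by a Quillen equivalence. You merely make explicit, via preservation of push-outs and cofibrations and Ken Brown's lemma, why $F$ applied to the push-out of a cofibrant resolution computes the homotopy push-out of $\mathbb{L}F(f)$; the paper also notes (but you do not need) that the first assertion follows directly from the adjunction $\rmap_{\C N}(\mathbb{L}FX,Z)\simeq\rmap_{\C M}(X,\mathbb{R}GZ)$ on derived mapping spaces.
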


\begin{rem}\label{casilp}
This is a continuation of the previous remark. Let $X\st{\bar f}\into\tilde Y\st{\sim}\r Y$ be a factorization of $f$ into a cofibration followed by a weak equivalence. If $\C M$ is left proper the gluing lemma holds, see \cite[Proposition 13.5.4]{hirschhorn}. Therefore,  the previous push-out square is isomorphic to
$$\xymatrix{X\ar@{ >->}[r]^{\bar f}\ar[d]_{f}\ar@{}[rd]|{\text{push}}&\tilde Y\ar[d]^{i_2}\\Y\ar@{ >->}[r]_-{i_1}&Y\cup_{X}\tilde Y}$$
in $\ho(\C M^{\square})$. In particular, $f$ is a homotopy epimorphism if and only if this $i_1$ is a weak equivalence. 

The gluing lemma also holds in cofibration categories \cite[II.1.2 (b)]{ah}. Hence, the same is true if $X$, $Y$ and $\tilde Y$ belong to a full subcategory of $\C M$ which is a cofibration category with cofibrations and weak equvialences defined as in $\C M$. 
\end{rem}

\section{Operads}\label{operads}

All operads considered in this paper are non-symmetric.

\begin{defn}\label{defop}
Let $\C{V}$ be a  symmetric monoidal category with tensor product $\otimes$ and tensor unit $\unit$. An \emph{operad} $\mathcal O$ in $\C V$ is a sequence $\mathcal O=\{\mathcal O(n)\}_{n\geq 0}$
of objects in $\C V$ equipped with an \emph{identity},
$$\id{\mathcal{O}}\colon\unit\r \mathcal{O}(1),$$ 
and  \emph{composition laws}, $1\leq i\leq p$, $q\geq 0$,
$$\circ_i\colon \mathcal{O}(p)\otimes \mathcal{O}(q)\To \mathcal{O}(p+q-1),$$
satisfying certain associativity and unit equations, see \cite[Remark 2.6]{htnso}. We refer to $\mathcal O(n)$ as the \emph{arity} $n$ component of $\mathcal O$.

A \emph{morphism of operads} $f\colon \mathcal O\r\mathcal P$ is a sequence of morphisms $f(n)\colon \mathcal O(n)\r\mathcal P(n)$ in $\C V$, $n\geq 0$, compatible with the identities and composition laws in the obvious way. We usually drop the arity from the notation $f(n)$ in order to simplify. We denote by $\operad{\C V}$ the category of operads in $\C V$. 
\end{defn}

\begin{rem}\label{examples}
If $\C{V}=\operatorname{Set}$ is the category of sets, the identity is simply an element $\id{\mathcal{O}}\in\mathcal{O}(1)$ and the associativity and unit equations are:
\begin{enumerate}
\item $(a\circ_i b)\circ_j c=(a\circ_j c)\circ_{i+q-1}b$ if $1\leq j<i$ and $c\in\mathcal{O}(q)$.
\item $(a\circ_i b)\circ_j c=a\circ_i( b\circ_{j-i+1}c)$ if $b\in\mathcal{O}(p)$ and $i\leq j <p+i$.
\item $\id{\mathcal{O}}\circ_1 a=a$.
\item $a\circ_i \id{\mathcal{O}}=a$.
\end{enumerate}
The same happens if $\C{V}=\operatorname{Top}$ is the category of topological spaces or the category $\Mod{\Bbbk}$ of modules over a commutative ring $\Bbbk$.

If $\C{V}=\Mod{\Bbbk}^{\mathbb Z}$ is the category of $\mathbb{Z}$-graded $\Bbbk$-modules then $\id{\mathcal{O}}$ must be in degree $0$, $\id{\mathcal{O}}\in\mathcal{O}(1)_{0}$, and  (1) must be replaced with
\begin{itemize}
\item[$(1')$] $(a\circ_i b)\circ_j c=(-1)^{|b||c|}(a\circ_j c)\circ_{i+q-1}b$ if $1\leq j<i$ and $c\in\mathcal{O}(q)$.
\end{itemize}
This reflects the use of the Koszul sign rule in the definition of the symmetry constraint for the tensor product in $\Mod{\Bbbk}^{\mathbb Z}$.

Furthermore, if $\C{V}=\operatorname{Ch}(\Bbbk)$ is the category of differential graded $\Bbbk$-modules the identity must be a cycle, $d(\id{\mathcal{O}})=0$, and the differential must behave as a derivation with respect to all composition laws,
$$d(a\circ_{i}b)=d(a)\circ_{i} b+(-1)^{|a|}a\circ_{i}d(b).$$
In this paper differentials have degree $|d|=-1$, i.e.~we consider chain complexes. %The same holds for the category $\C{V}=\operatorname{Ch}(\Bbbk)_{\geq 0}$ of chain complexes concentrated in non-negative degrees.

The category $\C V=\operatorname{Grd}$ of groupoids with the cartesian symmetric monoidal structure behaves essentially as $\operatorname{Set}$. The identity $\id{\mathcal O}$ is an object of the groupoid $\mathcal O(1)$. 
\end{rem}

\begin{rem}\label{alternative}
Operads can be alternatively (and are usually) described in terms of \emph{multiplication morphisms}, $n\geq 1$, $p_{1},\dots p_{n}\geq 0$,
\begin{align*}
\mathcal O(n)\otimes \mathcal O(p_{1})\otimes\cdots\otimes \mathcal O(p_{n})&\To \mathcal O(p_{1}+\cdots+p_{n}),
\end{align*}
defined by iterating composition laws, e.g.~if $\C V$ is any of the categories in the previous remark, this morphism is given by
\begin{align*}
(a,b_{1},\dots,b_{n})\mapsto{}& a(b_{1},\dots,b_{n})\\&=(\cdots((a\circ_{1}b_{1})\circ_{p_{1}+1}b_{2})\circ_{p_{1}+p_{2}+1}\cdots)\circ_{p_{1}+\cdots+p_{n-1}+1}b_{n}. 
\end{align*}
This iterated composition can be expressed in many different ways, for instance, if $\C{V}=\operatorname{Set}$ or $\operatorname{Mod}(\Bbbk)$,
$$a(b_{1},\dots,b_{n})=(\cdots((a\circ_{n}b_{n})\circ_{n-1}b_{n-1})\circ_{n-2}\cdots)\circ_{1}b_{1}.$$
If $\C{V}=\operatorname{Mod}(\Bbbk)^{\mathbb Z}$ or $\operatorname{Ch}(\Bbbk)$ this formula would be true up to a sign determined by the Koszul rule.

The multiplication morphisms together with the identity and certain associativity and unit equations yield an equivalent definition of operad, see \cite[Remark 2.5]{htnso}. If $\C V$ is the category of sets or $\Bbbk$-modules these equations are:
\begin{align*}
&\hspace{-28pt}a(b_{1}(c_{11},\dots, c_{1p_{1}}),\dots\dots,b_{n}(c_{n1},\dots, c_{np_{n}}))\\
&=a(b_{1},\dots,b_{n})(c_{11},\dots, c_{1p_{1}},\dots\dots,c_{n1},\dots, c_{np_{n}}),\\
\id{\mathcal O}(a)&=a,\\
a(\id{\mathcal O},\dots, \id{\mathcal O})&=a.
\end{align*}
If $\C V$ is the category of graded modules we must alter the first equation with a sign, according to the Koszul rule. In the differential graded case, in addition, the differential  must behave  like a derivation with respect to the multiplication morphisms, i.e.
\begin{align*}
d(a(b_{1},\dots,b_{n}))=d(a)(b_{1},\dots,b_{n})+\sum_{i=1}^{n}(-1)^{|a|+\sum\limits_{j=1}^{i-1}|b_{j}|}a(b_{1},\dots,d(b_{i}),\dots,b_{n}).
\end{align*}
\end{rem}

\begin{exm}
The \emph{unital associative operad} $\mathtt{uAss}^{\C{V}}$ in $\C{V}$, whose algebras are unital mon\-oids, is given by $\mathtt{uAss}^{\C{V}}(n)=\unit{}$ for all $n\geq 0$. The identity of this operad $\id{\mathtt{uAss}^{\C{V}}}\colon\unit\r\mathtt{uAss}^{\C{V}}(1)$ is simply the identity morphism in $\unit{}$, and all composition laws are given by the unit isomorphism $\unit{}\otimes \unit{}\cong \unit{}$, wich is part of the symmetric monoidal structure of $\C{V}$.
\end{exm}

\begin{exm}\label{comparison}
Suppose $\C{V}$ is closed and has an initial object $\varnothing$. The \emph{associative operad} $\mathtt{Ass}^{\C{V}}$ in $\C{V}$, whose algebras are non-unital mon\-oids, is given by $\mathtt{Ass}^{\C{V}}(n)=\unit{}$ for all $n\geq 1$ and $\mathtt{Ass}^{\C{V}}(0)=\varnothing$. The operad structure is determined by the fact that the sequence of morphisms $$\phi^{\C V}\colon \mathtt{Ass}^{\C{V}}\To \mathtt{uAss}^{\C{V}}$$
given by the identity  in $\unit{}$ in all positive arities, $\phi^{\C V}(n)=\id{\unit{}}$, $n\geq 1$, is a morphism of operads.
\end{exm}

\begin{rem}\label{final}
Suppose $\C{V}$ is cartesian closed and has an initial object $\varnothing$. If $\otimes=\times$ is the cartesian product then $\unit$ is the final object in $\C V$ and $\mathtt{uAss}^{\C{V}}$ is the final operad, i.e.~the final object in $\operad{\C V}$. Hence, $\phi^{\C V}$ is the only possible map. Moreover, $\mathtt{Ass}^{\C{V}}$ is the final object of the full subcategory of operads which are $\varnothing$ in arity $0$. This happens when $\C V$ is $\operatorname{Set}$, $\operatorname{Top}$, or $\operatorname{Grd}$.
\end{rem}

\begin{rem}\label{free}
If $\C V$ is cocomplete, then so is $\operad{\C V}$. In this case, the forgetful functor from  $\operad{\C V}$ to the category $\C V^{\mathbb N}$ of sequences  $V=\{V(n)\}_{n\geq 0}$ of objects in $\C V$ has a left adjoint, the \emph{free operad} functor,
$$\xymatrix{\C V^{\mathbb N}\ar@<.5ex>[r]^-{\mathcal F}&\operad{\C V}.\ar@<.5ex>[l]^-{\text{forget}}}$$
We sometimes write $\mathcal F=\mathcal F_{\C V}$. 
This adjunction is monadic, i.e.~$\operad{\C V}$ is the category of algebras over the free operad monad. These facts allow the construction of operads by presentations. A presentatation of an operad $\mathcal O$ consists of describing $\mathcal O$ as the coequalizer of two parallel arrows between free operads,
$$\xymatrix{\mathcal F(U)\ar@<.5ex>[r]\ar@<-.5ex>[r]&\mathcal F(V)\ar[r]&\mathcal{O}.}$$

The free operad functor $\mathcal F$ can be explicitly described in terms of \emph{planted planar trees with leaves}, see \cite[\S3 and \S5]{htnso}. \emph{Planting} a tree consists of choosing a degree $1$ vertex, called \emph{root}. The \emph{degree} of a vertex is the number of adjacent edges. The \emph{planar} structure is given by an order in the set of vertices which indicates how to draw them from left to right. The \emph{leaves} are specified degree $1$ vertices different from the root. They can be distinguished in pictures since we do not draw them. We do not draw the root either, but there is no confusion since the root is placed at the bottom. Vertices are distrubuted in ascending layers according to the distance to the root. We call \emph{inner vertices} those which are drawn, i.e.~the vertices which are neither leaves nor the root. A \emph{cork} is an inner vertex of degree $1$. An \emph{inner edge} is an edge which is not adjacent to the root or to a leaf. These notions are better  
illustrated with 
a 
picture,
% Generated with LaTeXDraw 2.0.8
% Tue Aug 28 00:17:56 AZOST 2012
% \usepackage[usenames,dvipsnames]{pstricks}
% \usepackage{epsfig}
% \usepackage{pst-grad} % For gradients
% \usepackage{pst-plot} % For axes
$$\scalebox{.7} % Change this value to rescale the drawing.
{
\begin{pspicture}(0,-1.67)(2.42,1.67)
\psdots[dotsize=0.18](0.8,-0.85)
\psdots[dotsize=0.18](1.6,-0.05)
\psdots[dotsize=0.18](1.6,0.75)
\psdots[dotsize=0.18](1.4,1.55)
\psline[linewidth=0.02cm](0.8,-0.85)(0.8,-1.65)
\psline[linewidth=0.02cm](0.8,-0.85)(1.6,-0.05)
\psline[linewidth=0.02cm](1.6,-0.05)(1.6,0.75)
\psline[linewidth=0.02cm](1.6,-0.05)(2.4,0.75)
\psline[linewidth=0.02cm](1.6,-0.05)(0.8,0.75)
\psline[linewidth=0.02cm](0.8,0.75)(1.4,1.55)
\psline[linewidth=0.02cm](0.8,1.55)(0.8,0.75)
\psline[linewidth=0.02cm](0.8,0.75)(0.2,1.55)
\psline[linewidth=0.02cm](0.8,-0.85)(0.0,-0.05)
\psdots[dotsize=0.18](0.8,0.75)
\end{pspicture} 
}$$
This is a planted planar tree with four leaves and five inner vertices, including two corks. There are four inner edges. Sometimes, abusing language, we also call leaf or root to the adjacent edge, which is what we really depict. From now on, in the whole paper,  whenever we talk about trees we mean planted planar trees with leaves. 

Let us explicitly describe the free operad construction in the category $\C V=\operatorname{Set}$ of sets. Given a sequence of sets $V=\{V(n)\}_{n\geq 0}$ the free operad $\mathcal F(V)$ is formed in arity $n$ by labelled trees with $n$ leaves. A labelling consists of assigning an element of $V$ to each inner vertex. More precisely, the \emph{arity} of a vertex is the degree minus one, 
$$\text{arity of }v=(\text{degree of }v)-1,$$
and an inner vertex of arity $n$ is labelled with an element of $V(n)$, $n\geq 0$. For instance, the previous tree can be labelled as follows,
$$\scalebox{.7} % Change this value to rescale the drawing.
{
\begin{pspicture}(0,-1.67)(2.42,1.67)
\psdots[dotsize=0.18](0.8,-0.85)
\psdots[dotsize=0.18](1.6,-0.05)
\psdots[dotsize=0.18](1.6,0.75)
\psdots[dotsize=0.18](1.4,1.55)
\psline[linewidth=0.02cm](0.8,-0.85)(0.8,-1.65)
\psline[linewidth=0.02cm](0.8,-0.85)(1.6,-0.05)
\psline[linewidth=0.02cm](1.6,-0.05)(1.6,0.75)
\psline[linewidth=0.02cm](1.6,-0.05)(2.4,0.75)
\psline[linewidth=0.02cm](1.6,-0.05)(0.8,0.75)
\psline[linewidth=0.02cm](0.8,0.75)(1.4,1.55)
\psline[linewidth=0.02cm](0.8,1.55)(0.8,0.75)
\psline[linewidth=0.02cm](0.8,0.75)(0.2,1.55)
\psline[linewidth=0.02cm](0.8,-0.85)(0.0,-0.05)
\psdots[dotsize=0.18](0.8,0.75)
\rput(1.2,-1){$x_{2}$}
\rput(.6,.5){$y_{3}$}
\rput(2,-.2){$x_{3}$}
\rput(2,.8){$x_{0}$}
\rput(1.8,1.6){$y_{0}$}
\end{pspicture} 
}$$
Here, $x_{n},y_{n}\in V(n)$. Two labelled trees are identified if there is a simplicial isomorphism between them preserving the root, the leaves, the planar structure, and the labels. The composition law $\circ_{i}$ is defined by \emph{grafting}, i.e.~$T\circ_{i}T'$ is the labelled tree obtained by grafting the root of $T'$ onto the $i^{\text{th}}$ leaf of $T$, e.g.
$$
\begin{array}{c}
\scalebox{.7}{
\begin{pspicture}(0,-1.67)(2.42,0.8)
\psdots[dotsize=0.18](0.8,-0.85)
\psdots[dotsize=0.18](1.6,-0.05)
\psdots[dotsize=0.18](1.6,0.75)
%\psdots[dotsize=0.18](1.4,1.55)
\psline[linewidth=0.02cm](0.8,-0.85)(0.8,-1.65)
\psline[linewidth=0.02cm](0.8,-0.85)(1.6,-0.05)
\psline[linewidth=0.02cm](1.6,-0.05)(1.6,0.75)
\psline[linewidth=0.02cm](1.6,-0.05)(2.4,0.75)
\psline[linewidth=0.02cm](1.6,-0.05)(0.8,0.75)
%\psline[linewidth=0.02cm](0.8,0.75)(1.4,1.55)
%\psline[linewidth=0.02cm](0.8,1.55)(0.8,0.75)
%\psline[linewidth=0.02cm](0.8,0.75)(0.2,1.55)
\psline[linewidth=0.02cm](0.8,-0.85)(0.0,-0.05)
%\psdots[dotsize=0.18](0.8,0.75)
\rput(1.2,-1){$x_{2}$}
%\rput(.6,.5){$x_{3}$}
\rput(2,-.2){$x_{3}$}
\rput(2,.8){$x_{0}$}
%\rput(1.8,1.6){$x_{0}$}
\end{pspicture}}
\end{array}\quad
\circ_{2}\qquad
\begin{array}{c}
\scalebox{.7}{
\begin{pspicture}(.6,.2)(1.42,1.67)
%\psdots[dotsize=0.18](0.8,-0.85)
%\psdots[dotsize=0.18](1.6,-0.05)
%\psdots[dotsize=0.18](1.6,0.75)
\psdots[dotsize=0.18](1.4,1.55)
%\psline[linewidth=0.02cm](0.8,-0.85)(0.8,-1.65)
%\psline[linewidth=0.02cm](0.8,-0.85)(1.6,-0.05)
%\psline[linewidth=0.02cm](1.6,-0.05)(1.6,0.75)
%\psline[linewidth=0.02cm](1.6,-0.05)(2.4,0.75)
\psline[linewidth=0.02cm](.8,-0.05)(0.8,0.75)
\psline[linewidth=0.02cm](0.8,0.75)(1.4,1.55)
\psline[linewidth=0.02cm](0.8,1.55)(0.8,0.75)
\psline[linewidth=0.02cm](0.8,0.75)(0.2,1.55)
%\psline[linewidth=0.02cm](0.8,-0.85)(0.0,-0.05)
\psdots[dotsize=0.18](0.8,0.75)
%\rput(1.2,-1){$x_{2}$}
\rput(.5,.5){$y_{3}$}
%\rput(2,-.2){$y_{3}$}
%\rput(2,.8){$y_{0}$}
\rput(1.8,1.6){$y_{0}$}
\end{pspicture}} 
\end{array}
\qquad=\qquad
\begin{array}{c}
\scalebox{.7}{
\begin{pspicture}(0,-1.67)(2.42,1.67)
\psdots[dotsize=0.18](0.8,-0.85)
\psdots[dotsize=0.18](1.6,-0.05)
\psdots[dotsize=0.18](1.6,0.75)
\psdots[dotsize=0.18](1.4,1.55)
\psline[linewidth=0.02cm](0.8,-0.85)(0.8,-1.65)
\psline[linewidth=0.02cm](0.8,-0.85)(1.6,-0.05)
\psline[linewidth=0.02cm](1.6,-0.05)(1.6,0.75)
\psline[linewidth=0.02cm](1.6,-0.05)(2.4,0.75)
\psline[linewidth=0.02cm](1.6,-0.05)(0.8,0.75)
\psline[linewidth=0.02cm](0.8,0.75)(1.4,1.55)
\psline[linewidth=0.02cm](0.8,1.55)(0.8,0.75)
\psline[linewidth=0.02cm](0.8,0.75)(0.2,1.55)
\psline[linewidth=0.02cm](0.8,-0.85)(0.0,-0.05)
\psdots[dotsize=0.18](0.8,0.75)
\rput(1.2,-1){$x_{2}$}
\rput(.6,.5){$y_{3}$}
\rput(2,-.2){$x_{3}$}
\rput(2,.8){$x_{0}$}
\rput(1.8,1.6){$y_{0}$}
\end{pspicture}}
\end{array}.
$$
The identity is the smallest possible tree, the only tree with no inner vertices, where the root edge is a leaf,
$$\id{}=\begin{array}{c}\begin{pspicture}(0.6,-0.6)(.6,0)
\psline[linewidth=0.02cm](0.6,0)(0.6,-0.61)
\end{pspicture}\end{array}.$$
The unit of the adjunction $V\rightarrow\mathcal F(V)$ is the sequence of maps sending $x\in V(n)$, $n>0$, to the \emph{corolla} with $n$ leaves and no corks whose only inner vertex, of arity $n$, is labelled with $x$,
$$\begin{array}{c}
\overbrace{\scalebox{.7} % Change this value to rescale the drawing.
{
\begin{pspicture}(0,-0.63)(1.22,0.7)
\psdots[dotsize=0.18](0.6,-0.01)
\psline[linewidth=0.02cm](0.6,0)(0.6,-0.61)
\psline[linewidth=0.02cm](0.6,0)(0.0,0.59)
\psline[linewidth=0.02cm](0.6,0)(1.2,0.59)
\psdots[dotsize=0.04](0.4,0.59)
\psdots[dotsize=0.04](0.6,0.59)
\psdots[dotsize=0.04](0.8,0.59)
\rput(0.9,-0.01){$x$}
\end{pspicture}
}}^{n}\end{array}.$$
For $n=0$, $x\in V(0)$ is sent to the corolla with no leaves and one cork, the \emph{lollipop}, labelled with $x$, 
$$\begin{array}{c}
\scalebox{.7} % Change this value to rescale the drawing.
{
\begin{pspicture}(0,-0.43)(1.22,0.2)
\psdots[dotsize=0.18](0.6,-0.01)
\psline[linewidth=0.02cm](0.6,0)(0.6,-0.61)
\rput(0.9,-0.01){$x$}
\end{pspicture}
}\end{array}.$$

Over an arbitrary category $\C V$, the free operad $\mathcal F(V)$ generated by a sequence $V$ is given by 
$$\mathcal F(V)(n)=\coprod_TV(T).$$
Here $T$ runs over the (isomorphism classes of) trees with $n$ leaves, and 
$V(T)$ is a tensor product whose factors are objects of the sequence $V$, one for each inner vertex of $T$. More precisely, the tensor factor associated to an inner vertex $v\in T$ of arity $n$ is $V(n)$, e.g.
$$T\quad=\quad\begin{array}{c}
\scalebox{.7} % Change this value to rescale the drawing.
{
\begin{pspicture}(0,-1.67)(2.42,1.67)
\psdots[dotsize=0.18](0.8,-0.85)
\psdots[dotsize=0.18](1.6,-0.05)
\psdots[dotsize=0.18](1.6,0.75)
\psdots[dotsize=0.18](1.4,1.55)
\psline[linewidth=0.02cm](0.8,-0.85)(0.8,-1.65)
\psline[linewidth=0.02cm](0.8,-0.85)(1.6,-0.05)
\psline[linewidth=0.02cm](1.6,-0.05)(1.6,0.75)
\psline[linewidth=0.02cm](1.6,-0.05)(2.4,0.75)
\psline[linewidth=0.02cm](1.6,-0.05)(0.8,0.75)
\psline[linewidth=0.02cm](0.8,0.75)(1.4,1.55)
\psline[linewidth=0.02cm](0.8,1.55)(0.8,0.75)
\psline[linewidth=0.02cm](0.8,0.75)(0.2,1.55)
\psline[linewidth=0.02cm](0.8,-0.85)(0.0,-0.05)
%\psline[linewidth=0.02cm](2.4,0.75)(2.4,1.55)
\psdots[dotsize=0.18](0.8,0.75)
%\psdots[dotsize=0.18](2.4,0.75)
\end{pspicture} 
}\end{array}
\leadsto
\begin{array}{c}
\scalebox{1} % Change this value to rescale the drawing.
{
\begin{pspicture}(0,-1.67)(2.42,1.67)
\rput(0.8,-0.85){$V(2)$}
\rput(1.2,-0.45){$\otimes$}
\rput(1.6,-0.05){$V(3)$}
\rput(1.6,0.35){$\otimes$}
%\rput(2.2,0.35){$\otimes$}
\rput(1.,0.35){$\otimes$}
\rput(1.6,0.75){$V(0)$}
\rput(1.4,1.55){$V(0)$}
\rput(1.,1.15){$\otimes$}
\rput(0.5,0.75){$V(3)$}
%\rput(2.7,0.75){$V(1)$}
\end{pspicture} 
}\end{array}=\quad  V(T).$$
The composition laws can be described as formal graftings, as above.  The identity is the inclusion of the factor $V(|)=\unit$ of the coproduct $\mathcal F(V)(1)$, which is the tensor unit since $|$ has no inner vertices. The unit $V\rightarrow\mathcal F(V)$ is the sequence of morphisms $V(n)\rightarrow\mathcal F(V)(n)$  given by the inclusion of the factor of the coproduct corresponding to corolla with $n$ leaves and  no corks for $n>0$, and to the lollipop for $n=0$.

If $\C V=\operatorname{Mod}(\Bbbk)$ is the category of $\Bbbk$-modules, labelled trees as above denote tensors, e.g.
$$\begin{array}{c}
\scalebox{.7} % Change this value to rescale the drawing.
{
\begin{pspicture}(0,-1.67)(2.42,1.67)
\psdots[dotsize=0.18](0.8,-0.85)
\psdots[dotsize=0.18](1.6,-0.05)
\psdots[dotsize=0.18](1.6,0.75)
\psdots[dotsize=0.18](1.4,1.55)
\psline[linewidth=0.02cm](0.8,-0.85)(0.8,-1.65)
\psline[linewidth=0.02cm](0.8,-0.85)(1.6,-0.05)
\psline[linewidth=0.02cm](1.6,-0.05)(1.6,0.75)
\psline[linewidth=0.02cm](1.6,-0.05)(2.4,0.75)
\psline[linewidth=0.02cm](1.6,-0.05)(0.8,0.75)
\psline[linewidth=0.02cm](0.8,0.75)(1.4,1.55)
\psline[linewidth=0.02cm](0.8,1.55)(0.8,0.75)
\psline[linewidth=0.02cm](0.8,0.75)(0.2,1.55)
\psline[linewidth=0.02cm](0.8,-0.85)(0.0,-0.05)
\psdots[dotsize=0.18](0.8,0.75)
\rput(1.2,-1){$x_{2}$}
\rput(.6,.5){$y_{3}$}
\rput(2,-.2){$x_{3}$}
\rput(2,.8){$x_{0}$}
\rput(1.8,1.6){$y_{0}$}
\end{pspicture} 
}   
  \end{array}=x_2\otimes x_3\otimes y_3\otimes y_0\otimes x_0.
$$
Similarly if $\C V=\operatorname{Mod}(\Bbbk)^{\mathbb Z},\operatorname{Ch}(\Bbbk),$ etc.
\end{rem}

\begin{rem}\label{treecomp}
Labelled trees can also be used to represent an iterated composition in an operad $\mathcal O$. Labels, as above, are placed in inner vertices, and the arity of the label must coincide with the arity of the vertex. The way of composing elements is determied by the geometry of the tree, e.g.
$$
\begin{array}{c}
\scalebox{.7} % Change this value to rescale the drawing.
{
\begin{pspicture}(0,-1.67)(2.42,1.67)
\psdots[dotsize=0.18](0.8,-0.85)
\psdots[dotsize=0.18](1.6,-0.05)
\psdots[dotsize=0.18](1.6,0.75)
\psdots[dotsize=0.18](1.4,1.55)
\psline[linewidth=0.02cm](0.8,-0.85)(0.8,-1.65)
\psline[linewidth=0.02cm](0.8,-0.85)(1.6,-0.05)
\psline[linewidth=0.02cm](1.6,-0.05)(1.6,0.75)
\psline[linewidth=0.02cm](1.6,-0.05)(2.4,0.75)
\psline[linewidth=0.02cm](1.6,-0.05)(0.8,0.75)
\psline[linewidth=0.02cm](0.8,0.75)(1.4,1.55)
\psline[linewidth=0.02cm](0.8,1.55)(0.8,0.75)
\psline[linewidth=0.02cm](0.8,0.75)(0.2,1.55)
\psline[linewidth=0.02cm](0.8,-0.85)(0.0,-0.05)
%\psline[linewidth=0.02cm](2.4,0.75)(2.4,1.55)
\psdots[dotsize=0.18](0.8,0.75)
%\psdots[dotsize=0.18](2.4,0.75)
\rput(1.2,-1){$x_{2}$}
\rput(.6,.5){$y_{3}$}
\rput(2,-.2){$x_{3}$}
\rput(1.9,.9){$x_{0}$}
\rput(1.8,1.6){$y_{0}$}
%\rput(2.8,.8){$x_{1}$}
\end{pspicture} 
}
\end{array}\quad=
\quad 
x_2(-,x_3(y_3(-,-,y_0),x_0,-)).$$
The labelled trees of a free operad are also iterated compositions in this sense. 
Over an arbitrary category $\C V$, whose objects may not be sets with structure, composition of labelled trees is really a morphism $\mathcal O(T)\r\mathcal O(n)$ built from composition laws, where $n$ is the number of leaves of $T$.
One can actually give yet another characterization of  the category of operads in terms of the objects $\mathcal O(T)$ and morphisms between them induced by maps of trees, see \cite[\S3]{htnso}.
\end{rem}

\begin{rem}\label{binary}
Let $\C V$ be a any (co)complete closed symmetric monoidal category. 
In \cite[\S 5]{htnso}, an explicit construction of the push-out in $\operad{\C V}$ of a diagram of the form $$\mathcal O\longleftarrow\mathcal F(U)\st{\mathcal F(f)}\To \mathcal F(V)$$
is given.  In the following sections we make an extensive use of coproducts of the form $\mathcal O\amalg\mathcal F(V)$ in $\operad{\C V}$. Such a coproduct is a push-out as above with $U$ the initial sequence, which consists of the initial object $U(n)=\varnothing$ in $\C V$ in each arity $n\geq 0$. In this remark we explain in a detailed way how the general construction in \cite{htnso} looks like in this specific case. 

In general, a push-out in $\operad{\C V}$ as above can be decomposed in $\C V^{\mathbb N}$ as sequential colimit 
$$\mathcal O=P_0\st{\varphi_1}\To P_1\r\cdots\r P_{t-1}\st{\varphi_t}\To P_t\r\cdots.$$
Here, each $\varphi_t(n)\colon P_{t-1}(n)\r P_{t}(n)$ is a push-out of a coproduct  of maps in $\C V$. 
The source of each of these maps is a tensor product in $\C V$ containing at least one component of $U$, in particular it is $\varnothing$ if $U$ is the initial sequence. Moreover, if $U$ is the initial sequence $P_t(n)$ is obtained from $P_{t-1}(n)$ by adding new coproduct factors, $t\geq 1$. This coproduct is indexed by the set of trees with $t$ inner vertices and $n$ leaves and such that all leaves are vertices of even level. 
The \emph{level} of a vertex $v\in T$  is the distance to the root, i.e.~the number of edges in the shortest path from $v$ to the root. Therefore, the operad $\mathcal O\amalg\mathcal F(V)$  in arity $n$ can be decomposed as a coproduct in $\C V$ indexed by the set of all trees with exactly $n$ leaves all of which have even level. We now proceed with this explicit description. % of $\mathcal O\amalg\mathcal F(V)$, first  as a sequence and then as an operad.

For any tree $T$, we define an object $(\mathcal O, V)(T)$ in $\C V$ which is a tensor product of components of $\mathcal O$ and $V$ indexed by the inner vertices of $T$. Suppose $v\in T$ is an inner vertex of arity $m$. The corresponding tensor factor is $V(m)$ if $v$ has even level and
$\mathcal O(m)$ if $v$ has odd level, e.g.~
$$T\quad=\quad\begin{array}{c}
\scalebox{.7} % Change this value to rescale the drawing.
{
\begin{pspicture}(0,-1.67)(2.42,1.67)
\psdots[dotsize=0.18](0.8,-0.85)
\psdots[dotsize=0.18](1.6,-0.05)
\psdots[dotsize=0.18](1.6,0.75)
\psdots[dotsize=0.18](1.4,1.55)
\psline[linewidth=0.02cm](0.8,-0.85)(0.8,-1.65)
\psline[linewidth=0.02cm](0.8,-0.85)(1.6,-0.05)
\psline[linewidth=0.02cm](1.6,-0.05)(1.6,0.75)
\psline[linewidth=0.02cm](1.6,-0.05)(2.4,0.75)
\psline[linewidth=0.02cm](1.6,-0.05)(0.8,0.75)
\psline[linewidth=0.02cm](0.8,0.75)(1.4,1.55)
\psline[linewidth=0.02cm](0.8,1.55)(0.8,0.75)
\psline[linewidth=0.02cm](0.8,0.75)(0.2,1.55)
\psline[linewidth=0.02cm](0.8,-0.85)(0.0,-0.05)
\psline[linewidth=0.02cm](2.4,0.75)(2.4,1.55)
\psdots[dotsize=0.18](0.8,0.75)
\psdots[dotsize=0.18](2.4,0.75)
\end{pspicture} 
}\end{array}
\leadsto
\begin{array}{c}
\scalebox{1} % Change this value to rescale the drawing.
{
\begin{pspicture}(0,-1.67)(2.42,1.67)
\rput(0.8,-0.85){$\mathcal O(2)$}
\rput(1.2,-0.45){$\otimes$}
\rput(1.6,-0.05){$V(3)$}
\rput(1.6,0.35){$\otimes$}
\rput(2.2,0.35){$\otimes$}
\rput(1.,0.35){$\otimes$}
\rput(1.6,0.75){$\mathcal O(0)$}
\rput(1.4,1.55){$V(0)$}
\rput(1.,1.15){$\otimes$}
\rput(0.5,0.75){$\mathcal O(3)$}
\rput(2.7,0.75){$\mathcal O(1)$}
\end{pspicture} 
}\end{array}\quad=\quad(\mathcal O, V)(T).$$

The arity $n$ component of the coproduct $\mathcal O\amalg\mathcal F(V)$ is
$$\left(\mathcal O\amalg\mathcal F(V)\right)(n)=\coprod_{T}(\mathcal O, V)(T),$$
where $T$ runs over the trees with $n$ leaves  of even level and no leaves of odd level. 

In order to explain how the composition laws are defined, we look at the case $\C V=\operatorname{Set}$ so as to work with labelled trees. An element in $(\mathcal O, V)(T)$ can be seen as a labelling of $T$,
$$
\begin{array}{c}
\scalebox{.7} % Change this value to rescale the drawing.
{
\begin{pspicture}(0,-1.67)(2.42,1.67)
\psdots[dotsize=0.18](0.8,-0.85)
\psdots[dotsize=0.18](1.6,-0.05)
\psdots[dotsize=0.18](1.6,0.75)
\psdots[dotsize=0.18](1.4,1.55)
\psline[linewidth=0.02cm](0.8,-0.85)(0.8,-1.65)
\psline[linewidth=0.02cm](0.8,-0.85)(1.6,-0.05)
\psline[linewidth=0.02cm](1.6,-0.05)(1.6,0.75)
\psline[linewidth=0.02cm](1.6,-0.05)(2.4,0.75)
\psline[linewidth=0.02cm](1.6,-0.05)(0.8,0.75)
\psline[linewidth=0.02cm](0.8,0.75)(1.4,1.55)
\psline[linewidth=0.02cm](0.8,1.55)(0.8,0.75)
\psline[linewidth=0.02cm](0.8,0.75)(0.2,1.55)
\psline[linewidth=0.02cm](0.8,-0.85)(0.0,-0.05)
\psline[linewidth=0.02cm](2.4,0.75)(2.4,1.55)
\psdots[dotsize=0.18](0.8,0.75)
\psdots[dotsize=0.18](2.4,0.75)
\rput(1.2,-1){$x_{2}$}
\rput(.6,.5){$x_{3}$}
\rput(2,-.2){$y_{3}$}
\rput(1.9,.9){$x_{0}$}
\rput(1.8,1.6){$y_{0}$}
\rput(2.8,.8){$x_{1}$}
\end{pspicture} 
}
\end{array}%\quad=\quad x_{2}\otimes y_{3}\otimes x_{3}\otimes y_{0}\otimes x_{0}\otimes x_{1}
,\qquad
x_{i}\in\mathcal O(i),\qquad y_{j}\in V(j).
$$
The $i^{\text{th}}$ composition law in $\mathcal O\amalg\mathcal F(V)$ is defined as follows. We take two such labelled trees, graft the root of the second one into the $i^{\text{th}}$ leaf of the first one, and contract the newly created inner edge. All vertices keep their label except for the vertex resulting from the contraction. This vertex is formed by merging two vertices, $v$ and $w$, labelled with elements in $\mathcal O$, $x_{v}$ and $x_{w}$, respectively.  Assume the $i^{\text{th}}$ leaf of the first tree is the $j^{\text{th}}$ incomming edge of $v$, i.e.~the $j^{\text{th}}$ edge adjacent to $v$ situated above. Then the label of the shrinked edge is $x_{v}\circ_{j}x_{w}$. Let us see an example,
$$\begin{array}{c}
\scalebox{.7} % Change this value to rescale the drawing.
{
\begin{pspicture}(0,-1.67)(2.42,1.67)
\psdots[dotsize=0.18](0.8,-0.85)
\psdots[dotsize=0.18](1.6,-0.05)
\psdots[dotsize=0.18](1.6,0.75)
\psline[linewidth=0.02cm](0.8,-0.85)(0.8,-1.65)
\psline[linewidth=0.02cm](0.8,-0.85)(1.6,-0.05)
\psline[linewidth=0.02cm](1.6,-0.05)(1.6,0.75)
\psline[linewidth=0.02cm](1.6,-0.05)(2.4,0.75)
\psline[linewidth=0.02cm](1.6,-0.05)(0.8,0.75)
\psline[linewidth=0.02cm](0.8,0.75)(1.4,1.55)
\psline[linewidth=0.02cm](0.8,0.75)(0.2,1.55)
\psline[linewidth=0.02cm](0.8,-0.85)(0.0,-0.05)
\psline[linewidth=0.02cm](2.4,0.75)(2.4,1.55)
\psdots[dotsize=0.18](0.8,0.75)
\psdots[dotsize=0.18](2.4,0.75)
\rput(1.2,-1){$x_{2}$}
\rput(.6,.5){$x_{2}$}
\rput(2,-.2){$y_{3}$}
\rput(1.9,.9){$x_{0}$}
\rput(2.8,.8){$x_{1}$}
\end{pspicture} 
}
\end{array}
\circ_{3}
\begin{array}{c}
\scalebox{.7} % Change this value to rescale the drawing.
{
\begin{pspicture}(0,-1.67)(1.6,0)
\psdots[dotsize=0.18](0.8,-0.85)
\psline[linewidth=0.02cm](0.8,-0.85)(0.8,-1.65)
\psline[linewidth=0.02cm](0.8,-0.85)(1.6,-0.05)
\psline[linewidth=0.02cm](0.8,-0.85)(0,-0.05)
\psdots[dotsize=0.18](1.6,-0.05)
\rput(1.2,-1){$x_{2}'$}
\rput(1.95,-0.05){$y_{0}'$}
\end{pspicture} 
}
\end{array}
\leadsto
\begin{array}{c}
\scalebox{.7} % Change this value to rescale the drawing.
{
\begin{pspicture}(0,-1.67)(2.42,2.35)
\psdots[dotsize=0.18](0.8,-0.85)
\psdots[dotsize=0.18](1.6,-0.05)
\psdots[dotsize=0.18](1.6,0.75)
\psline[linewidth=0.02cm](0.8,-0.85)(0.8,-1.65)
\psline[linewidth=0.02cm](0.8,-0.85)(1.6,-0.05)
\psline[linewidth=0.02cm](1.6,-0.05)(1.6,0.75)
\psline[linewidth=0.02cm](1.6,-0.05)(2.4,0.75)
\psline[linewidth=0.02cm](1.6,-0.05)(0.8,0.75)
\psline[linewidth=0.02cm](1.4,1.55)(1.8,2.35)
\psline[linewidth=0.02cm](1,2.35)(1.4,1.55)
\psline[linewidth=0.02cm](0.8,.75)(0.2,1.55)
\psline[linewidth=0.02cm](0.8,-0.85)(0.0,-0.05)
\psline[linewidth=0.02cm](2.4,0.75)(2.4,1.55)
\psdots[dotsize=0.18](0.8,0.75)
\psdots[dotsize=0.18](2.4,0.75)
\rput(1.2,-1){$x_{2}$}
\rput(.6,.5){$x_{2}$}
\rput(2,-.2){$y_{3}$}
\rput(1.9,.9){$x_{0}$}
\rput(1.8,1.6){$x_{2}'$}
\rput(2.8,.8){$x_{1}$}
\psline[linewidth=0.02cm](0.8,0.75)(1.4,1.55)
\psdots[dotsize=0.18](1.4,1.55)
\psdots[dotsize=0.18](1.8,2.35)
\rput(2.15,2.35){$y_{0}'$}
\end{pspicture} 
}
\end{array}\quad\leadsto\quad
\begin{array}{c}
\scalebox{.7} % Change this value to rescale the drawing.
{
\begin{pspicture}(0,-1.67)(2.42,1.67)
\psdots[dotsize=0.18](0.8,-0.85)
\psdots[dotsize=0.18](1.6,-0.05)
\psdots[dotsize=0.18](1.6,0.75)
\psdots[dotsize=0.18](1.4,1.55)
\psline[linewidth=0.02cm](0.8,-0.85)(0.8,-1.65)
\psline[linewidth=0.02cm](0.8,-0.85)(1.6,-0.05)
\psline[linewidth=0.02cm](1.6,-0.05)(1.6,0.75)
\psline[linewidth=0.02cm](1.6,-0.05)(2.4,0.75)
\psline[linewidth=0.02cm](1.6,-0.05)(0.8,0.75)
\psline[linewidth=0.02cm](0.8,0.75)(1.4,1.55)
\psline[linewidth=0.02cm](0.8,1.55)(0.8,0.75)
\psline[linewidth=0.02cm](0.8,0.75)(0.2,1.55)
\psline[linewidth=0.02cm](0.8,-0.85)(0.0,-0.05)
\psline[linewidth=0.02cm](2.4,0.75)(2.4,1.55)
\psdots[dotsize=0.18](0.8,0.75)
\psdots[dotsize=0.18](2.4,0.75)
\rput(1.2,-1){$x_{2}$}
\rput(.1,.5){$x_{2}\circ_{2}x_{2}'$}
\rput(2,-.2){$y_{3}$}
\rput(1.9,.9){$x_{0}$}
\rput(1.8,1.6){$y_{0}'$}
\rput(2.8,.8){$x_{1}$}
\end{pspicture} 
}
\end{array}.$$
The inclusion of the first factor $\mathcal O\rightarrow\mathcal O\amalg\mathcal F(V)$ sends $x\in \mathcal O(n)$ to
$$\begin{array}{c}
\overbrace{\scalebox{.7} % Change this value to rescale the drawing.
{
\begin{pspicture}(0,-0.63)(1.22,0.7)
\psdots[dotsize=0.18](0.6,-0.01)
\psline[linewidth=0.02cm](0.6,0)(0.6,-0.61)
\psline[linewidth=0.02cm](0.6,0)(0.0,0.59)
\psline[linewidth=0.02cm](0.6,0)(1.2,0.59)
\psdots[dotsize=0.04](0.4,0.59)
\psdots[dotsize=0.04](0.6,0.59)
\psdots[dotsize=0.04](0.8,0.59)
\rput(0.9,-0.01){$x$}
\end{pspicture}
}}^{n}\end{array},\quad n>0,\qquad
\begin{array}{c}
\scalebox{.7} % Change this value to rescale the drawing.
{
\begin{pspicture}(0,-0.43)(1.22,0.2)
\psdots[dotsize=0.18](0.6,-0.01)
\psline[linewidth=0.02cm](0.6,0)(0.6,-0.61)
\rput(0.9,-0.01){$x$}
\end{pspicture}
}\end{array},\quad n=0.
$$
The inclusion of the second factor $\mathcal F(V)\rightarrow\mathcal O\amalg\mathcal F(V)$ sends $y\in V(n)$ to
$$\begin{array}{c}
\overbrace{\scalebox{.7} % Change this value to rescale the drawing.
{
\begin{pspicture}(0,-1.21)(1.2,1.2)
\psdots[dotsize=0.18](0.6,-0.01)
\psline[linewidth=0.02cm](0.6,0)(0.6,-0.61)
\psline[linewidth=0.02cm](0.6,-1.21)(0.6,-0.61)
\psline[linewidth=0.02cm](0.6,0)(0.0,0.59)
\psline[linewidth=0.02cm](0.6,0)(1.2,0.59)
\psdots[dotsize=0.04](0.4,0.9)
\psdots[dotsize=0.04](0.6,0.9)
\psdots[dotsize=0.04](0.8,0.9)
\rput(.9,-0.01){$y$}
\psdots[dotsize=0.18](0.0,0.59)
\psdots[dotsize=0.18](1.2,0.59)
\psdots[dotsize=0.18](0.6,-0.61)
\rput(1.1,-0.61){$\id{\mathcal O}$}
\psline[linewidth=0.02cm](0.0,0.59)(0.0,1.19)
\psline[linewidth=0.02cm](1.2,0.59)(1.2,1.19)
\rput(-0.4,0.59){$\id{\mathcal O}$}
\rput(1.65,0.59){$\id{\mathcal O}$}
\end{pspicture}
}}^{n}\end{array}\quad,\quad n>0,\qquad
\begin{array}{c}
\scalebox{.7} % Change this value to rescale the drawing.
{
\begin{pspicture}(0,-1.21)(1.2,0)
\psdots[dotsize=0.18](0.6,-0.01)
\psline[linewidth=0.02cm](0.6,0)(0.6,-0.61)
\psline[linewidth=0.02cm](0.6,-1.21)(0.6,-0.61)
\rput(.9,-0.01){$y$}
\psdots[dotsize=0.18](0.6,-0.61)
\rput(1.1,-0.61){$\id{\mathcal O}$}
\end{pspicture}
}\end{array},\quad n=0.$$
%The identity is
%$$
%\id{\mathcal O\amalg\mathcal F(V)}\quad=\quad\begin{array}{c}
%\scalebox{.7} % Change this value to rescale the drawing.
%{
%\begin{pspicture}(0.8,-1.05)(1.4,1.05)
%\psline[linewidth=0.02cm](0.8,-0.03)(0.8,-1.03)
%\psline[linewidth=0.02cm](0.8,0.97)(0.8,-0.03)
%\psdots[dotsize=0.18](0.8,-0.03)
%\rput(1.3,-0.03){$\id{\mathcal O}$}
%\end{pspicture} 
%}
%\end{array}.$$
The labelled trees above represent  iterated compositions in $\mathcal O\amalg\mathcal F(V)$ in the sense of Remark \ref{treecomp}. 

The cases $\C V=\operatorname{Grd}$, $\operatorname{Mod}(\Bbbk)$, $\operatorname{Mod}(\Bbbk)^{\mathbb Z}$ and $\operatorname{Ch}(\Bbbk)$ are analogous. We leave the reader to formulate an `element free' description of composition laws as in \cite[\S5]{htnso}.
\end{rem}

\begin{exm}\label{sets}
The operad $\mathtt{Ass}^{\C V}$ admits a presentation with one arity $2$ generator and one arity $3$ relation, i.e.~it fits into a coequalizer
$$\xymatrix{\mathcal F(\unit[3])\ar@<.5ex>[r]^-{r_{1}}\ar@<-.5ex>[r]_-{r_{2}}&\mathcal F(\unit[2])\ar[r]^{g}&\mathtt{Ass}^{\C V}.}$$
Here, given an object $X$ in $\C V$ and $n\geq 0$, we denote by $X[n]$ the sequence consisting of $X$ concentrated in arity $n$ and the initial object $\varnothing$ elsewhere.
The morphism $g$ is induced by the identity $\unit=\mathtt{Ass}^{\C V}(2)$. Moreover, $\mathcal F(\unit[2])(2)=\unit$ 
and $r_{i}$ is induced by the morphism
$$\unit\cong\mathcal F(\unit[2])(2)\otimes\mathcal F(\unit[2])(2)\st{\circ_{i%1
}}\To\mathcal F(\unit[2])(3)%\st{\circ_{2}}\longleftarrow\mathcal F(\unit[2])(2)\otimes\mathcal F(\unit[2])(2)\cong\unit
,\quad i=1,2.$$

For $\C V=\operatorname{Set}$ and $\operatorname{Mod}(\Bbbk)$, this translates into a 
generator $$\mu\in \mathtt{Ass}^{\C V}(2)$$ and a relation
$$\mu\circ_{1}\mu=\mu\circ_{2}\mu \in \mathtt{Ass}^{\C V}(3).$$
If  $\C V=\operatorname{Mod}(\Bbbk)^{\mathbb Z}$ we must specify that $\mu$ is in degree $0$, $\mu\in \mathtt{Ass}^{\C V}(2)_{0}$. For  $\C V=\operatorname{Ch}(\Bbbk)$, $\mu$ is in addition a cycle, $d(\mu)=0$. Moreover, if  $\C V=\operatorname{Grd}$, $\mu$ is an object.

We denote by $\mu^{n-1}$ the arity $n$ element  obtained by composing $n-1$ copies of $\mu$,  $n\geq 1$, e.g.
$$\mu^{n-1}=(\cdots((\mu\circ_{1}\mu)\circ_{1}\mu)\circ_{1}\cdots)\circ_{1}\mu.$$ 
Here the brackets and the subscripts do not really matter, because of the defining relation.

Although $\mathtt{Ass}^{\C V}$ is not a free operad, it is customary to depict $\mu^{n-1}$ as a corolla with $n$ leaves and no corks, $n\geq 2$,
$$\mu^{n-1}\quad=\begin{array}{c}
\overbrace{\scalebox{.7} % Change this value to rescale the drawing.
{
\begin{pspicture}(0,-0.63)(1.22,0.63)
\psdots[dotsize=0.18](0.6,-0.01)
\psline[linewidth=0.02cm](0.6,0)(0.6,-0.61)
\psline[linewidth=0.02cm](0.6,0)(0.0,0.59)
\psline[linewidth=0.02cm](0.6,0)(1.2,0.59)
\psdots[dotsize=0.04](0.4,0.59)
\psdots[dotsize=0.04](0.6,0.59)
\psdots[dotsize=0.04](0.8,0.59)
\end{pspicture}
}}^n\\[10pt]\end{array}.$$
Hence, the composition laws in $\mathtt{Ass}^{\C V}$ are given by grafting and then contracting the newly created inner edge,
$$\mu^{p-1}\circ_{i}\mu^{q-1}\quad\leadsto\begin{array}{c}
\scalebox{.7} % Change this value to rescale the drawing.
{
\begin{pspicture}(0,-2.0567188)(2.7190626,2.0767188)
\psline[linewidth=0.02cm](1.2,-0.83671874)(1.2,-2.0367188)
\psline[linewidth=0.02cm](1.2,-0.83671874)(0.0,0.36328125)
\psline[linewidth=0.02cm](1.2,-0.83671874)(2.6,0.36328125)
\psdots[dotsize=0.04](0.4,0.36328125)
\psdots[dotsize=0.04](0.6,0.36328125)
\psdots[dotsize=0.04](0.8,0.36328125)
\psdots[dotsize=0.04](2.2,0.36328125)
\psdots[dotsize=0.04](2.0,0.36328125)
\psdots[dotsize=0.04](1.8,0.36328125)
\psline[linewidth=0.02cm](1.4,0.36328125)(1.2,-0.83671874)
\psline[linewidth=0.02cm](1.4,0.36328125)(0.6,1.5632813)
\psline[linewidth=0.02cm](1.4,0.36328125)(2.2,1.5632813)
\psdots[dotsize=0.04](1.2,1.5632813)
\psdots[dotsize=0.04](1.4,1.5632813)
\psdots[dotsize=0.04](1.6,1.5632813)
\psdots[dotsize=0.18](1.2,-0.83671874)
\psdots[dotsize=0.18](1.4,0.36328125)
\usefont{T1}{ptm}{m}{n}
\rput(1.4,1.9){$\overbrace{\,\quad\qquad\qquad}^{\displaystyle q}$}
\usefont{T1}{ptm}{m}{n}
\rput(.45,.7){$\overbrace{\qquad\quad}^{\displaystyle i-1}$}
\usefont{T1}{ptm}{m}{n}
\rput(2.2,0.7){$\overbrace{\qquad\quad}^{\;\;\displaystyle p-i}$}
\end{pspicture} 
}\\[10pt]\end{array}
\leadsto
\begin{array}{c}
\overbrace{\scalebox{.7} % Change this value to rescale the drawing.
{
\begin{pspicture}(0,-0.63)(1.22,0.63)
\psdots[dotsize=0.18](0.6,-0.01)
\psline[linewidth=0.02cm](0.6,0)(0.6,-0.61)
\psline[linewidth=0.02cm](0.6,0)(0.0,0.59)
\psline[linewidth=0.02cm](0.6,0)(1.2,0.59)
\psdots[dotsize=0.04](0.4,0.59)
\psdots[dotsize=0.04](0.6,0.59)
\psdots[dotsize=0.04](0.8,0.59)
\end{pspicture}
}}^{p+q-1}\\[10pt]\end{array}\;=\quad\mu^{p+q-2}.
$$

The operad $\mathtt{uAss}^{\C V}$ admits a presentation extending the presentation of $\mathtt{Ass}^{\C V}$ with one more generator in arity $0$ and two more relations in arity $1$,
$$\xymatrix@C=30pt{\mathcal F((\unit\amalg \unit)[1])\ar@<.5ex>[r]^-{r'_{1}}\ar@<-.5ex>[r]_-{r'_{2}}&\mathcal F(\unit[0])\amalg \mathtt{Ass}^{\C V}\ar[r]^-{(g',\phi^{\C V})}&\mathtt{uAss}^{\C V}.}$$
The morphism  $g'$ is induced by the identity $\unit=\mathtt{uAss}^{\C V}(0)$. We leave the reader to give an abstract description of the morphisms $r_{1}'$ and $r_{2}'$ defining the relations.

If $\C V$ is one of the examples considered above, the new generator is denoted by
$$u\in \mathtt{uAss}^{\C V}(0).$$ 
We must specify that $u$ is in degree $0$, a cycle, or an object, according to which $\C V$ we are working with. In all cases, the two relations are
$$\mu\circ_{1}u=\id{}=\mu\circ_{2}u  \in \mathtt{uAss}^{\C V}(1).$$
In terms of trees, $u$ us represented by the trivial corolla,
$$u\quad=\begin{array}{c}
\scalebox{.7} % Change this value to rescale the drawing.
{
\begin{pspicture}(0.6,-0.5)(.6,0)
\psline[linewidth=0.02cm](0.6,0)(0.6,-0.61)
\psdots[dotsize=0.18,fillstyle=solid,dotstyle=o](0.6,-0.01)
\end{pspicture}
}\end{array}.$$
Here, the cork is depicted in white for reasons that will be clear in the proof of Lemma \ref{uuu} below.
The composition laws are given as above, including also the following case, $n>2$,
$$\mu^{n-1}\circ_{i}u\quad\leadsto\begin{array}{c}
\scalebox{.7} % Change this value to rescale the drawing.
{
\begin{pspicture}(0,-2.0567188)(2.7190626,1.2)
\psline[linewidth=0.02cm](1.2,-0.83671874)(1.2,-2.0367188)
\psline[linewidth=0.02cm](1.2,-0.83671874)(0.0,0.36328125)
\psline[linewidth=0.02cm](1.2,-0.83671874)(2.6,0.36328125)
\psdots[dotsize=0.04](0.4,0.36328125)
\psdots[dotsize=0.04](0.6,0.36328125)
\psdots[dotsize=0.04](0.8,0.36328125)
\psdots[dotsize=0.04](2.2,0.36328125)
\psdots[dotsize=0.04](2.0,0.36328125)
\psdots[dotsize=0.04](1.8,0.36328125)
\psline[linewidth=0.02cm](1.4,0.36328125)(1.2,-0.83671874)
\psdots[dotsize=0.18](1.2,-0.83671874)
\psdots[dotsize=0.18,fillstyle=solid,dotstyle=o](1.4,0.36328125)
\usefont{T1}{ptm}{m}{n}
\rput(.45,.7){$\overbrace{\qquad\quad}^{\displaystyle i-1}$}
\usefont{T1}{ptm}{m}{n}
\rput(2.2,0.7){$\overbrace{\qquad\quad}^{\displaystyle n-i}$}
\end{pspicture} 
}\\[10pt]\end{array}
\leadsto
\begin{array}{c}
\overbrace{\scalebox{.7} % Change this value to rescale the drawing.
{
\begin{pspicture}(0,-0.63)(1.22,0.63)
\psdots[dotsize=0.18](0.6,-0.01)
\psline[linewidth=0.02cm](0.6,0)(0.6,-0.61)
\psline[linewidth=0.02cm](0.6,0)(0.0,0.59)
\psline[linewidth=0.02cm](0.6,0)(1.2,0.59)
\psdots[dotsize=0.04](0.4,0.59)
\psdots[dotsize=0.04](0.6,0.59)
\psdots[dotsize=0.04](0.8,0.59)
\end{pspicture}
}}^{n-1}\\[10pt]\end{array}\;=\quad\mu^{n-2}.
$$
There are, however, two exceptions,
$$\mu\circ_{1}u\quad\leadsto\begin{array}{c}
\scalebox{.7} % Change this value to rescale the drawing.
{
\begin{pspicture}(0,-0.63)(1.22,0.7)
\psdots[dotsize=0.18](0.6,-0.01)
\psline[linewidth=0.02cm](0.6,0)(0.6,-0.61)
\psline[linewidth=0.02cm](0.6,0)(0.0,0.59)
\psline[linewidth=0.02cm](0.6,0)(1.2,0.59)
\psdots[dotsize=0.18,fillstyle=solid,dotstyle=o](0,0.59)
\end{pspicture}
}\end{array}\leadsto\begin{array}{c}
\scalebox{.7} % Change this value to rescale the drawing.
{
\begin{pspicture}(0.6,-0.5)(.6,0)
\psline[linewidth=0.02cm](0.6,0)(0.6,-0.61)
\end{pspicture}
}\end{array}\;=\quad\id{},\qquad\quad
\mu\circ_{2}u\quad\leadsto
\begin{array}{c}
\scalebox{.7} % Change this value to rescale the drawing.
{
\begin{pspicture}(0,-0.63)(1.22,0.7)
\psdots[dotsize=0.18](0.6,-0.01)
\psline[linewidth=0.02cm](0.6,0)(0.6,-0.61)
\psline[linewidth=0.02cm](0.6,0)(0.0,0.59)
\psline[linewidth=0.02cm](0.6,0)(1.2,0.59)
\psdots[dotsize=0.18,fillstyle=solid,dotstyle=o](1.2,0.59)
\end{pspicture}
}\end{array}\leadsto\begin{array}{c}
\scalebox{.7} % Change this value to rescale the drawing.
{
\begin{pspicture}(0.6,-0.5)(.6,0)
\psline[linewidth=0.02cm](0.6,0)(0.6,-0.61)
\end{pspicture}
}\end{array}\;=\quad\id{}.$$
The morphism $\phi^{\C V}$ is defined in terms of trees by the obvious inclusion. 
\end{exm}

We now prove that  this morphism is an epimorphism.

\begin{prop}\label{episet}
The morphism $\phi^{\operatorname{Set}}\colon \mathtt{Ass}^{\operatorname{Set}}\r \mathtt{uAss}^{\operatorname{Set}}$ in Example \ref{comparison} is an epimorphism in $\operad{\operatorname{Set}}$. 
\end{prop}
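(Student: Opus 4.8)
The plan is to verify directly that $\phi^{\operatorname{Set}}$ is an epimorphism, i.e.\ that for every operad $\mathcal{O}$ in $\operatorname{Set}$ the map $\operad{\operatorname{Set}}(\phi^{\operatorname{Set}},\mathcal{O})$ is injective; equivalently, by Proposition~\ref{epi}(4), that the codiagonal $\nabla\colon \mathtt{uAss}^{\operatorname{Set}}\cup_{\mathtt{Ass}^{\operatorname{Set}}}\mathtt{uAss}^{\operatorname{Set}}\r\mathtt{uAss}^{\operatorname{Set}}$ is an isomorphism. I will use the first formulation. The starting observation is that, by the presentation of $\mathtt{uAss}^{\operatorname{Set}}$ in Example~\ref{sets}, each set $\mathtt{uAss}^{\operatorname{Set}}(n)$ is a point, spanned by the iterated product $\mu^{n-1}$ for $n\geq 1$ and by $u$ for $n=0$; hence a morphism of operads out of $\mathtt{uAss}^{\operatorname{Set}}$ is completely determined by the images of $\mu\in\mathtt{uAss}^{\operatorname{Set}}(2)$ and $u\in\mathtt{uAss}^{\operatorname{Set}}(0)$, since every element of $\mathtt{uAss}^{\operatorname{Set}}$ is an iterated composite of copies of $\mu$, $u$ and the operad identity.

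So let $f,g\colon \mathtt{uAss}^{\operatorname{Set}}\r\mathcal{O}$ be operad morphisms with $f\circ\phi^{\operatorname{Set}}=g\circ\phi^{\operatorname{Set}}$. Since $\phi^{\operatorname{Set}}$ is the identity in arity $2$ (Example~\ref{comparison}) and $\mathtt{Ass}^{\operatorname{Set}}(2)=\{\mu\}$, we get $f(\mu)=g(\mu)=:m\in\mathcal{O}(2)$, so it only remains to prove $f(u)=g(u)$. Put $a=f(u)$ and $b=g(u)$. Applying $f$ and $g$ to the defining relations $\mu\circ_1 u=\id{}=\mu\circ_2 u$ of Example~\ref{sets} shows that $a$ and $b$ are both two-sided units for $m$, namely $m\circ_1 a=m\circ_2 a=\id{\mathcal{O}}=m\circ_1 b=m\circ_2 b$ in $\mathcal{O}(1)$. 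Now I run the classical uniqueness-of-units computation inside $\mathcal{O}$: the associativity axiom~(1) of Remark~\ref{examples}, applied with outer composite $(m\circ_2 b)\circ_1 a$ (here $c=a\in\mathcal{O}(0)$, $q=0$, $i=2$, $j=1$, and $1\leq j<i$), gives $(m\circ_2 b)\circ_1 a=(m\circ_1 a)\circ_1 b$ in $\mathcal{O}(0)$; rewriting the right-hand side using $m\circ_1 a=\id{\mathcal{O}}$ and the unit axiom~(3) yields $b$, while rewriting the left-hand side using $m\circ_2 b=\id{\mathcal{O}}$ yields $a$. Hence $a=b$, so $f(u)=g(u)$, and therefore $f=g$, which is what we wanted.

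The argument is short, and the only point that needs genuine care is the index bookkeeping: one must check that the two composites $(m\circ_1 a)\circ_1 b$ and $(m\circ_2 b)\circ_1 a$ really live in arity $0$ and are identified by axiom~(1), and that the reduction "a morphism out of $\mathtt{uAss}^{\operatorname{Set}}$ is determined by the images of $\mu$ and $u$" is legitimate — this is exactly the content of the presentation in Example~\ref{sets}. An alternative, more structural route would be to construct the pushout $\mathtt{uAss}^{\operatorname{Set}}\cup_{\mathtt{Ass}^{\operatorname{Set}}}\mathtt{uAss}^{\operatorname{Set}}$ explicitly from the coproduct description of Remark~\ref{binary}, observe that the two arity-$0$ generators coming from the two copies of $\mathtt{uAss}^{\operatorname{Set}}$ are forced to agree by the same unit computation, deduce that the pushout is again $\mathtt{uAss}^{\operatorname{Set}}$, and invoke Proposition~\ref{epi}; but the direct verification above is cleaner and I would present that one.
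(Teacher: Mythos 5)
Your proof is correct and is essentially the paper's own argument: both reduce to checking agreement on the generators $\mu$ and $u$ via the presentation of $\mathtt{uAss}^{\operatorname{Set}}$, and both then run the classical uniqueness-of-two-sided-units computation, using axiom (1) of Remark \ref{examples} to interchange the two arity-$0$ insertions and the unit axiom to collapse each side to $f(u)$ and $g(u)$ respectively. The only difference is presentational: the paper evaluates the single element $(f(\mu)\circ_1 f(u))\circ_1 g(u)$ in two ways, whereas you start from the interchange identity and simplify both sides.
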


\begin{proof}
%The sets $\mathtt{Ass}^{\operatorname{Set}}(n)=\mathtt{uAss}^{\operatorname{Set}}(n)$, $n\geq 1$, and $\mathtt{uAss}^{\operatorname{Set}}(0)$ are singletons, and $\mathtt{Ass}^{\operatorname{Set}}(0)=\varnothing$ is empty. 

Consider two parallel morphisms in $\operad{\operatorname{Set}}$,
$$\xymatrix{\mathtt{uAss}^{\operatorname{Set}}
\ar@<.5ex>[r]^-{f}\ar@<-.5ex>[r]_-{g}
&\mathcal O,}$$
such that $f\phi^{\operatorname{Set}}=g\phi^{\operatorname{Set}}$. We must show that $f=g$. It is enough to prove that they coincide on the generators $\mu$ and $u$. They coincide on $\mu$ since it comes from $\phi^{\operatorname{Set}}$.
We now show that $f(u)=g(u)$ through a series of equations that hold by Remark \ref{examples},
\begin{align*}
(f(\mu)\circ_{1}f(u))\circ_{1}g(u)&=f(\mu\circ_{1}u)\circ_{1}g(u)\\
&=f(\id{\mathtt{uAss}^{\operatorname{Set}}})\circ_{1}g(u)\\
&=\id{\mathcal O}\circ_{1}g(u)\\
&=g(u),\\
(f(\mu)\circ_{1}f(u))\circ_{1}g(u)&=(f(\mu)\circ_{2}g(u))\circ_{1}f(u)\\
&=(g(\mu)\circ_{2}g(u))\circ_{1}f(u)\\
&=g(\mu\circ_{2}u)\circ_{1}f(u)\\
&=g(\id{\mathtt{uAss}^{\operatorname{Set}}})\circ_{1}f(u)\\
&=\id{\mathcal O}\circ_{1}f(u)\\
&=f(u).
\end{align*}
\end{proof}

We deduce that $\phi^{\C V}$ is an epimorphism in the following general situation.

\begin{prop}\label{epiall}
Let $\C{V}$ be a closed symmetric monoidal category with coproducts. 
The morphism $\phi^{\C V}\colon \mathtt{Ass}^{\C{V}}\r \mathtt{uAss}^{\C{V}}$ in Example \ref{comparison} is an epimorphism in $\operad{\C V}$. 
\end{prop}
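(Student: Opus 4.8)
The plan is to transport the already-established case of sets, Proposition~\ref{episet}, along the copower functor. Since $\C V$ has coproducts there is a functor $F\colon\operatorname{Set}\to\C V$, $S\mapsto\coprod_{S}\unit$, left adjoint to $\C V(\unit,-)\colon\C V\to\operatorname{Set}$. Because $\C V$ is closed, $X\otimes-$ preserves coproducts (in particular the initial object), so the canonical maps $F(S)\otimes F(T)\to F(S\times T)$ are isomorphisms, $F(\ast)=\unit$ and $F(\varnothing)=\varnothing$; hence $F\dashv\C V(\unit,-)$ is a symmetric monoidal adjunction.

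Next I would promote this to an adjunction on operad categories $F_{*}\colon\operad{\operatorname{Set}}\rightleftarrows\operad{\C V}\colon\C V(\unit,-)_{*}$, with $(F_{*}\mathcal P)(n)=F(\mathcal P(n))$, operad structure transported along the structure isomorphisms of $F$, and $(\C V(\unit,-)_{*}\mathcal O)(n)=\C V(\unit,\mathcal O(n))$, operad structure coming from the lax monoidal structure of $\C V(\unit,-)$. This works because $F$ commutes with the free-operad monads on $\C V^{\mathbb N}$ and $\operatorname{Set}^{\mathbb N}$: the free operad $\mathcal F(V)(n)=\coprod_{T}V(T)$ is assembled from coproducts and tensor products, both preserved by the strong monoidal left adjoint $F$, so $F_{*}$ is the functor induced between the monadic categories of operads, with right adjoint $\C V(\unit,-)_{*}$. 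Unwinding Examples~\ref{comparison} and~\ref{sets} and using $F(\ast)=\unit$, $F(\varnothing)=\varnothing$, one identifies $F_{*}\mathtt{Ass}^{\operatorname{Set}}=\mathtt{Ass}^{\C V}$, $F_{*}\mathtt{uAss}^{\operatorname{Set}}=\mathtt{uAss}^{\C V}$ and $F_{*}\phi^{\operatorname{Set}}=\phi^{\C V}$.

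To conclude, a left adjoint preserves epimorphisms: for any operad $\mathcal O$ in $\C V$ the adjunction identifies $\operad{\C V}(\phi^{\C V},\mathcal O)$ with $\operad{\operatorname{Set}}(\phi^{\operatorname{Set}},\C V(\unit,-)_{*}\mathcal O)$, which is injective because $\phi^{\operatorname{Set}}$ is an epimorphism by Proposition~\ref{episet}; hence $\phi^{\C V}=F_{*}\phi^{\operatorname{Set}}$ is an epimorphism. (This route uses only the adjunction, so no push-out in $\operad{\C V}$ is needed and Proposition~\ref{epi} is not invoked.) The only genuinely non-formal step, and hence the main point to be careful about, is the lift of the symmetric monoidal adjunction to operads in the second step: everything rests on $F$ being strong monoidal — to transport operad structure in both directions — and on its preserving the colimits and tensor products out of which free operads are built.

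An alternative that sidesteps this is to imitate the proof of Proposition~\ref{episet} directly, with morphisms $\unit\to\mathcal O(n)$ in place of elements of $\mathcal O(n)$: given $f,g\colon\mathtt{uAss}^{\C V}\to\mathcal O$ with $f\phi^{\C V}=g\phi^{\C V}$, they agree on $\mathtt{uAss}^{\C V}(2)=\unit$ (common value $m$), and writing $u_{f}=f(0)$, $u_{g}=g(0)\colon\unit\to\mathcal O(0)$ the relations $\mu\circ_{1}u=\id{}=\mu\circ_{2}u$ of Example~\ref{sets} give $m\circ_{1}u_{f}=\id{\mathcal O}=m\circ_{2}u_{g}$; the operad axioms then force $u_{g}=(m\circ_{1}u_{f})\circ_{1}u_{g}=(m\circ_{2}u_{g})\circ_{1}u_{f}=u_{f}$, the middle equality being the interchange axiom $(a\circ_{2}b)\circ_{1}c=(a\circ_{1}c)\circ_{2+q-1}b$ applied to $m\otimes u_{f}\otimes u_{g}$ (the one bit of bookkeeping being the symmetry of $\C V$ used to form $m\circ_{2}u_{g}$), and since $\mathtt{uAss}^{\C V}$ is a quotient of the free operad on one arity-$2$ and one arity-$0$ generator, agreement on $m$ and on $u_{f}=u_{g}$ forces $f=g$.
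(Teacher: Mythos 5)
Your primary argument is exactly the paper's proof: transport Proposition~\ref{episet} along the copower adjunction $S\mapsto\coprod_{S}\unit\dashv\C V(\unit,-)$, lift it to operad categories, identify $\phi^{\C V}$ with the image of $\phi^{\operatorname{Set}}$ under the left adjoint, and conclude since left adjoints preserve epimorphisms. Your alternative element-free argument is also sound and is precisely what the paper alludes to immediately after its proof as the way to drop the coproduct hypothesis and obtain Proposition~\ref{epicureo} in full generality.
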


\begin{proof}
The `underlying set' functor $\C{V}(\unit,-)\colon\C V\r\operatorname{Set}$ is part of a lax-lax symmetric monoidal adjoint pair
$$\xymatrix@C=40pt{
\operatorname{Set}\ar@<.5ex>[r]^-{-\otimes\unit}&\C V.\ar@<.5ex>[l]^-{\C{V}(\unit,-)}
}$$
The left adjoint sends a set $S$ to the coproduct of copies of the tensor unit indexed by this set $S\otimes\unit=\amalg_{s\in S}\unit$. This  adjoint pair induces an adjoint pair between categories of operads
$$\xymatrix@C=40pt{
\operad{\operatorname{Set}}\ar@<.5ex>[r]^-{-\otimes\unit}&\operad{\C V}.\ar@<.5ex>[l]^-{\C{V}(\unit,-)}
}$$
Notice that $\mathtt{Ass}^{\operatorname{Set}}\otimes \unit=\mathtt{Ass}^{\C{V}}$, $\mathtt{uAss}^{\operatorname{Set}}\otimes \unit=\mathtt{uAss}^{\C{V}}$, and $\phi^{\operatorname{Set}}\otimes\unit=\phi^{\C V}$. Hence, this proposition follows from the previous one, since left adjoints preserve epimorphisms.
\end{proof}

Proposition \ref{epiall} is  true even if $\C V$ does not have coproducts. The proof of Proposition \ref{episet} can be translated into diagrams in order to check this general case, Propositon \ref{epicureo}.

If $\C V$ has a suitable model structure, compatible with the monoidal structure, then the category of operads $\operad{\C V}$ carries an induced model structure. 

\begin{thm}[{\cite[Theorem 1.1]{htnso}}]\label{modelo}
Let $\C V$ be a cofibrantly generated closed symmetric monoidal model
category. Assume that $\C V$ satisfies the monoid axiom. Moreover, suppose that there are
sets of generating cofibrations $I$ and generating trivial cofibrations $J$ in $\C V$ with presentable
sources. Then the category $\operad{\C V}$ of  operads in $\C V$ is a cofibrantly
generated model category such that a morphism $f\colon\mathcal O \r\mathcal P$ in $\operad{\C V}$ is a weak
equivalence (resp. fibration) if and only if $f(n)\colon\mathcal O(n) \r\mathcal P(n)$ is a weak equivalence
(resp. fibration) in $\C V$ for all $n\geq0$.
\end{thm}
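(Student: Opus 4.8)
The plan is to obtain the model structure on $\operad{\C V}$ by transferring the product model structure from $\C V^{\mathbb N}$ along the free--forgetful adjunction $\mathcal F\colon\C V^{\mathbb N}\rightleftarrows\operad{\C V}$ of Remark \ref{free}. The product model structure on $\C V^{\mathbb N}$ is cofibrantly generated, with generating (trivial) cofibrations the maps $i[n]\colon A[n]\r B[n]$ for $i\colon A\r B$ in $I$ (resp.\ $j[n]$ for $j$ in $J$) and $n\geq 0$, in the notation of Example \ref{sets}; its weak equivalences and fibrations are the arity-wise ones. One must then verify the hypotheses of the standard transfer theorem for cofibrantly generated model categories: (a) $\operad{\C V}$ is complete and cocomplete; (b) the domains of the sets $\mathcal F(I[n])$ and $\mathcal F(J[n])$ permit the small object argument in $\operad{\C V}$; and (c) the forgetful functor sends every relative $\mathcal F(J)$-cell complex to a weak equivalence in $\C V^{\mathbb N}$, i.e.\ to an arity-wise weak equivalence. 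Once (a)--(c) hold, the transferred structure has as fibrations and weak equivalences exactly the maps the forgetful functor sends to fibrations and weak equivalences, which is the asserted characterization.

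Condition (a) is easy: limits in $\operad{\C V}$ are created arity-wise in $\C V$ (the forgetful functor to $\C V^{\mathbb N}$ is monadic, see Remark \ref{free}), and cocompleteness is Remark \ref{free}. For (b), the key observation is that the forgetful functor $\operad{\C V}\r\C V^{\mathbb N}$ preserves sufficiently filtered colimits; this follows from the explicit tree description of $\mathcal F$ and of colimits of operads, since a tree has only finitely many vertices. Consequently a presentable object $A$ of $\C V$ yields a presentable, hence small, source $\mathcal F(A[n])$ in $\operad{\C V}$, and this is precisely the point at which the hypothesis that $I$ and $J$ have presentable sources is needed --- recall $\C V$ itself is not assumed locally presentable.

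The heart of the proof is condition (c). It rests on the explicit analysis of a single pushout along a free map
$$\xymatrix@C=32pt{\mathcal F(V)\ar[r]\ar[d]_{\mathcal F(j)}&\mathcal O\ar[d]\\\mathcal F(W)\ar[r]&\mathcal P}$$
with $j=j'[n]$, $j'\colon C\r D$ in $J$, using the construction of pushouts along free maps from \cite[\S5]{htnso} --- the same machinery underlying the explicit description of $\mathcal O\coprod\mathcal F(V)$ in Remark \ref{binary}. One filters $\mathcal P$, arity by arity, by a transfinite tower $\mathcal O=\mathcal P_{0}\r\mathcal P_{1}\r\cdots$ indexed by the number of ``new'' vertices carried by the relevant planar trees, in which each map $\mathcal P_{k-1}\r\mathcal P_{k}$ is, in each arity, a pushout in $\C V$ of a coproduct of maps of the form $(j')^{\square k}\otimes(\text{tensor product of components of }\mathcal O)$, where $(j')^{\square k}$ denotes the $k$-fold pushout--product of $j'$ with itself. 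Since $\C V$ is a monoidal model category and $j'$ is a trivial cofibration, $(j')^{\square k}$ is again a trivial cofibration; tensoring it with an arbitrary object of $\C V$ and then taking pushouts and transfinite composites stays within the weak equivalences by the monoid axiom. Hence each $\mathcal P_{k-1}\r\mathcal P_{k}$, and so the transfinite composite $\mathcal O\r\mathcal P$, is an arity-wise weak equivalence; a further transfinite composition together with the retract closure then yields (c) for all relative $\mathcal F(J)$-cell complexes.

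The main obstacle is exactly this last step: setting up the ``tree filtration'' of $\mathcal P$ and identifying its attaching maps explicitly enough that the monoid axiom applies. The subtlety is that a single generator may appear several times, nested, inside one tree, so the layers are governed not by plain pushouts of $\mathcal F(j)$ but by iterated pushout--products of $j'$; organizing this bookkeeping --- which is precisely why the combinatorics of planted planar trees and the even/odd level decomposition of Remark \ref{binary} were introduced --- is the technical core. Everything else is formal: bicompleteness of $\operad{\C V}$, the two-out-of-three and retract axioms (inherited arity-wise from $\C V$), and the lifting axioms (one holds by the definition of the classes, the other by the small object argument combined with (c)).
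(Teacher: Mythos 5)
Your proposal is correct and follows essentially the same route as the actual argument: the paper states this result as a citation of \cite[Theorem 1.1]{htnso}, and the proof there is exactly the transfer along the free--forgetful adjunction you describe, with the smallness condition supplied by the presentable sources and the key step being the tree filtration of a pushout along a free map from \cite[\S 5]{htnso}, whose layers are pushouts of iterated pushout--products tensored with pieces of $\mathcal O$, handled by the monoid axiom. No gaps.
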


We refer the reader to \cite[\S4]{hmc} and \cite{ammmc} for the theory of symmetric monoidal model categories. 
All categories $\C V$ in this paper will satisfy the assumptions in this theorem, and this will be the only model structure considered on $\operad{\C V}$. 

\begin{rem}\label{generatriz}
Let us describe sets of generating (trivial) cofibrations in $\operad{\C V}$. The model structure in the previous theorem is transferred along the free operad adjunction in Remark \ref{free}. 
The category of sequences $\C V^{\mathbb N}$ is endowed with the product model structure.

Recall that given an object $X$ in $\C V$ and $n\geq 0$, we denote by $X[n]$ the sequence consisting of $X$ concentrated in arity $n$ and the initial object $\varnothing$ elsewhere. Given a morphism $f\colon X\r Y$ in $\C V$ we denote by $f[n]\colon X[n]\r Y[n]$ the morphism of sequences defined by $f$ in arity $n$ and the identity in $\varnothing$ elsewhere. For any set $S$ of morphisms in $\C V$, we write
$$S_{\mathbb N}=\bigcup_{n\geq 0}\{f[n]\,;\,f\in S\}.$$
The sets $I_{\mathbb N}$ and $J_{\mathbb N}$ are sets of generating cofibrations and generating trivial cofibrations in $\C V^{\mathbb N}$, respectively. Hence, $\mathcal F(I_{\mathbb N})$ and $\mathcal F(J_{\mathbb N})$ are sets of generating cofibrations and generating trivial cofibrations in $\operad{\C V}$.
\end{rem}
%
%
%\begin{defn}
%Let $\C V$ be a symmetric monoidal  model category satisfying the hypotheses of Theorem \ref{modelo} and $\C C$ a $\C V$-algebra as in Example \ref{valg}. If $\mathcal O$ is an operad in $\C V$ and $Y$  is an object in $\C C$, the \emph{space of $\mathcal O$-algebra structures} on $Y$ is the derived mapping space
%$$\rmap_{\operad{\C V}}(\mathcal O, \mathtt{End}_{\C C}(Y)).$$
%\end{defn}
%
%\begin{rem}
%Despite this definition makes complete sense as it is, it becomes really interesting when $\C C$ carries a model structure compatible with the $\C V$-algebra structure and such that $\mathcal O$-algebras inherit a model structure from $\C C$, see \cite[Theorem 1.2]{htnso} for precise hypotheses. Moreover, in this case we must ensure that $\mathtt{End}_{\C C}(Y)$ has the right homotopy type, i.e.~not strictly dependent on $Y$ but on the weak equivalence class of $Y$. For this, we must replace $Y$ with a fibrant-cofibrant resolution, see \cite[Proposition 2.5]{manso}.
%\end{rem}

\begin{defn}\label{uiu}
Let $\C V$ be a symmetric monoidal model category satisfying the hypotheses of Theorem \ref{modelo}. 
An \emph{$A$-infinity operad} $\mathtt{A}^{\C V}_{\infty}$ in $\operad{\C V}$ is a cofibrant resolution of $\mathtt{Ass}^{\C V}$, $$\mathtt{A}^{\C V}_{\infty}\st{\sim}\To\mathtt{Ass}^{\C V}.$$

Similarly, a \emph{unital $A$-infinity operad} $\mathtt{uA}^{\C V}_{\infty}$ is a cofibrant resolution of $\mathtt{uAss}^{\C V}$  in $\operad{\C V}$, $$\mathtt{uA}^{\C V}_{\infty}\st{\sim}\To\mathtt{uAss}^{\C V}.$$

A \emph{$u$-infinity associative operad} $\mathtt{u}_{\infty}\mathtt{A}^{\C V}$ is the middle term of a factorization of $\phi^{\C V}$ as a cofibration $\bar\phi^{\C V}_{\infty}$ followed by a weak equivalence,
$$\mathtt{Ass}^{\C V}\st{\bar\phi^{\C V}_{\infty}}\into \mathtt{u}_{\infty}\mathtt{A}^{\C V}\st{\sim}\To \mathtt{uAss}^{\C V}.$$
\end{defn}

\begin{rem}\label{construcciones}
For each specific $\C V$, we may choose a cofibrant resolution of $\phi^{\C V}$,
$$\phi^{\C V}_{\infty}\colon \mathtt{A}^{\C V}_{\infty}\into \mathtt{uA}^{\C V}_{\infty}.$$
If $\C V$ satisfies the strong unit axiom \cite[Definition A.9]{htnso2}, e.g.~if the tensor unit is cofibrant, then 
we can define a $u$-infinity associative operad as the following push-out,
$$\xymatrix{\mathtt{A}^{\C V}_{\infty}\ar@{ >->}[r]^{\phi^{\C V}_{\infty}}\ar[d]_{\sim}\ar@{}[rd]|{\text{push}}& \mathtt{uA}^{\C V}_{\infty}\ar[d]^{\sim}\\
\mathtt{Ass}^{\C V}\ar@{ >->}[r]^{\bar\phi^{\C V}_{\infty}}&\mathtt{u}_{\infty}\mathtt{A}^{\C V}}$$
Here, the right vertical map is a weak equivalence by \cite[Corollary C.3 and Theorem C.7]{htnso2}. Hence, the map $\mathtt{u}_{\infty}\mathtt{A}^{\C V}\r \mathtt{uAss}^{\C V}$ induced by the universal property of the push-out is a weak equivalence by the 2-out-of-3 axiom.
\end{rem}

\begin{defn}\label{lol}
A \emph{$u$-infinity unital associative operad} $\mathtt{u}_{\infty}\mathtt{uA}^{\C V}$ is an operad fitting into a push-out square as follows,
$$\xymatrix{\mathtt{Ass}^{\C V}\ar@{ >->}[r]^{\bar\phi^{\C V}_{\infty}}\ar[d]_-{\phi^{\C V}}\ar@{}[rd]|{\text{push}}& \mathtt{u}_{\infty}\mathtt{A}^{\C V}\ar[d]^-{\psi^{\C V}}\\
\mathtt{uAss}^{\C V}\ar@{ >->}[r]_-{\varphi^{\C V}}&\mathtt{u}_{\infty}\mathtt{uA}^{\C V}}$$
\end{defn}

We will prove Theorem \ref{hepi} using the following lemma.

\begin{lem}\label{basico}
Let $\C V$ be a symmetric monoidal model category as in Theorem \ref{modelo}. Assume further that $\C V$ satisfies the strong unit axiom. The morphism $\phi^{\C V}\colon\mathtt{Ass}^{\C V}\r\mathtt{uAss}^{\C V}$ is a homotopy epimorphism in $\operad{\C V}$ if and only if $\varphi^{\C V}$ is a weak equivalence.
\end{lem}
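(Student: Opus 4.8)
The plan is to apply Proposition \ref{toen} and Remark \ref{casilp} to the cofibration $\bar\phi^{\C V}_{\infty}\colon\mathtt{Ass}^{\C V}\into\mathtt{u}_{\infty}\mathtt{A}^{\C V}$, which is a factorization of $\phi^{\C V}$ as a cofibration followed by a weak equivalence. The issue is that $\mathtt{Ass}^{\C V}$ need not be cofibrant, so Proposition \ref{toen} does not apply verbatim; this is exactly the situation addressed in Remark \ref{casilp}. First I would observe that, under the strong unit axiom, the results of \cite{htnso2} guarantee that the relevant full subcategory of $\operad{\C V}$ (operads built from $\mathtt{Ass}^{\C V}$ by cofibrations) is a cofibration category with cofibrations and weak equivalences inherited from $\operad{\C V}$, so the gluing lemma holds there and the alternative form of Remark \ref{casilp} is available. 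Concretely, $\mathtt{Ass}^{\C V}$, $\mathtt{uAss}^{\C V}$, and $\mathtt{u}_{\infty}\mathtt{A}^{\C V}$ all lie in this subcategory, so $\phi^{\C V}$ is a homotopy epimorphism if and only if the map $i_{1}$ in the push-out
$$\xymatrix{\mathtt{Ass}^{\C V}\ar@{ >->}[r]^-{\bar\phi^{\C V}_{\infty}}\ar[d]_-{\phi^{\C V}}\ar@{}[rd]|{\text{push}}&\mathtt{u}_{\infty}\mathtt{A}^{\C V}\ar[d]^-{\psi^{\C V}}\\\mathtt{uAss}^{\C V}\ar@{ >->}[r]_-{\varphi^{\C V}}&\mathtt{u}_{\infty}\mathtt{uA}^{\C V}}$$
is a weak equivalence — but this $i_{1}$ is precisely $\varphi^{\C V}$, by Definition \ref{lol}.

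Next I would need to make sure the push-out defining $\mathtt{u}_{\infty}\mathtt{uA}^{\C V}$ computes the homotopy push-out, i.e.~that it is invariant (up to weak equivalence) under replacing the given square by one between cofibrant objects. The cofibration $\bar\phi^{\C V}_{\infty}$ has cofibrant target $\mathtt{u}_{\infty}\mathtt{A}^{\C V}$ once we note $\mathtt{Ass}^{\C V}$ may be non-cofibrant; the point is that $\bar\phi^{\C V}_{\infty}$ is itself a cofibration in $\operad{\C V}$, so pushing out along it is homotopically meaningful within the cofibration category. Alternatively, and perhaps more cleanly, I would compare the given square with the square obtained from a cofibrant resolution $\tilde\phi\colon\tilde{\mathtt{Ass}}\into\widetilde{\mathtt{uAss}}$ of $\phi^{\C V}$: by Remark \ref{posttoen} the statements (2)--(4) of Proposition \ref{toen} depend only on the isomorphism class in $\ho(\operad{\C V}^{\square})$ of the associated push-out square, which in turn depends only on the class of $\phi^{\C V}$ in $\ho\operad{\C V}$. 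Since the square in Definition \ref{lol} is built from a cofibration and the weak equivalence $\mathtt{u}_{\infty}\mathtt{A}^{\C V}\st{\sim}\To\mathtt{uAss}^{\C V}$, the gluing lemma in the cofibration category shows it represents the same class, hence its $i_{1}=\varphi^{\C V}$ is a weak equivalence iff the corresponding map for the cofibrant resolution is, iff $\phi^{\C V}$ is a homotopy epimorphism.

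I expect the main obstacle to be bookkeeping about which objects are cofibrant and verifying that the relevant subcategory of $\operad{\C V}$ really is a cofibration category to which the gluing lemma of \cite{ah} applies — this is where the strong unit axiom and the results of \cite{htnso2} (in particular something like \cite[Theorem C.7]{htnso2}, already invoked in Remark \ref{construcciones}) do the work, ensuring that push-outs along $\bar\phi^{\C V}_{\infty}$ are homotopy invariant even though the tensor unit, and hence $\mathtt{Ass}^{\C V}$ and $\mathtt{uAss}^{\C V}$, need not be cofibrant. Once that foundational point is in place, the logical equivalence is a formal consequence of Proposition \ref{toen}, Remark \ref{casilp}, and the defining push-out square of $\mathtt{u}_{\infty}\mathtt{uA}^{\C V}$, with essentially no computation.
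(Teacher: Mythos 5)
Your proposal is correct and takes essentially the same route as the paper: factor $\phi^{\C V}$ through the cofibration $\bar\phi^{\C V}_{\infty}$, invoke the cofibration-category structure on a full subcategory of $\operad{\C V}$ containing $\mathtt{Ass}^{\C V}$, $\mathtt{uAss}^{\C V}$, $\mathtt{u}_{\infty}\mathtt{A}^{\C V}$ and $\mathtt{u}_{\infty}\mathtt{uA}^{\C V}$ so that Remark \ref{casilp} applies, and identify the resulting $i_{1}$ with $\varphi^{\C V}$ via Definition \ref{lol}. The only difference is one of precision: the paper names the subcategory exactly as $\operadpc{\C V}$, the operads with pseudo-cofibrant components, citing \cite{htnso2} for the cofibration-category structure and for membership of the four operads, whereas your description of the subcategory is looser and your second paragraph re-derives homotopy invariance that Remark \ref{casilp} already supplies.
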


\begin{proof}
The operads $\mathtt{Ass}^{\C V}$ and $\mathtt{uAss}^{\C V}$ belong to the full subcategory $\operadpc{\C V}\subset\operad{\C V}$ spanned by the operads $\mathcal O$ whose components $\mathcal O(n)$ are pseudo-cofibrant for all $n\geq 0$. Recall from \cite[Definition A.1]{htnso2} that an object $X$ in $\C V$ is \emph{pseudo-cofibrant} if the functor $X\otimes-$ preserves cofibrations. The tensor unit $\unit$ and the initial object $\varnothing$ are obviously  pseudo-cofibrant. The category $\operadpc{\C V}$ inherits from $\operad{\C V}$ the structure of a cofibration category, see \cite[Proposition C.8]{htnso2}. The operads $\mathtt{u}_{\infty}\mathtt{A}^{\C V}$ and $\mathtt{u}_{\infty}\mathtt{uA}^{\C V}$ are also in $\operadpc{\C V}$, see \cite[Corollary C.2]{htnso2}. Hence, this lemma follows from Remark \ref{casilp}. 
\end{proof}

The following lemma is useful to check that some symmetric monoidal categories carry a compatible model structure.

\begin{lem}\label{ppa}
Let $F\colon \C V\rightleftarrows\C W\colon G$ be a lax-lax symmetric monoidal adjunction between symmetric monoidal categories. Suppose that $\C V$ is a cofibrantly generated model category satisfying the push-out product axiom in \cite[Definition 3.1]{ammmc}. Assume further that $\C W$ possesses a transferred model structure along this adjunction, in the sense of \cite[Theorem 11.3.2]{hirschhorn}. Then $\C W$  also satisfies the push-out product axiom.
\end{lem}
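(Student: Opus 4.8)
The plan is to reduce the push-out product axiom for $\C W$ to a statement about the generating (trivial) cofibrations, and then transport that statement back along $F$ from $\C V$, where the axiom holds by hypothesis. By \cite[Theorem 11.3.2]{hirschhorn}, the transferred model structure on $\C W$ is cofibrantly generated with $F(I)=\{F(i):i\in I\}$ a set of generating cofibrations and $F(J)=\{F(j):j\in J\}$ a set of generating trivial cofibrations; in particular $F$ is left Quillen, hence preserves cofibrations and trivial cofibrations, and being a left adjoint it is cocontinuous. Now I would invoke the standard reduction for the push-out product axiom in a cofibrantly generated monoidal model category: since the push-out product $-\,\square\,-$ is cocontinuous in each variable and the cofibrations and trivial cofibrations of $\C W$ are closed under push-outs, transfinite compositions and retracts, it suffices to check that $f\,\square\,g$ is a cofibration for all $f,g\in F(I)$ and a trivial cofibration for all $f\in F(I)$, $g\in F(J)$, the case $f\in F(J)$, $g\in F(I)$ being handled by the symmetry of $\square$.

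The core step is to identify these push-out products of generators. Let $i\colon A\to B$ and $i'\colon C\to D$ lie in $I$, or $i\in I$ and $i'\in J$. Since $F$ is the left adjoint of a \emph{lax-lax} symmetric monoidal adjunction, it is strong symmetric monoidal, so the comparison maps $F(X)\otimes F(Y)\to F(X\otimes Y)$ are isomorphisms; together with cocontinuity of $F$, this shows that $F$ sends the push-out square computing the domain of $i\,\square\,i'$ to the push-out square computing the domain of $F(i)\,\square\,F(i')$, yielding a natural isomorphism
\[
F(i)\,\square\,F(i')\;\cong\;F\bigl(i\,\square\,i'\bigr),
\]
the left-hand push-out product being formed in $\C W$ and the right-hand one in $\C V$. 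By the push-out product axiom in $\C V$, $i\,\square\,i'$ is a cofibration, and a trivial cofibration if $i$ or $i'$ is one; applying the left Quillen functor $F$ shows $F(i)\,\square\,F(i')$ is a cofibration in $\C W$, trivial when $i'\in J$. With the reduction above, this gives the push-out product axiom for $\C W$.

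The only genuine obstacle is the natural isomorphism $F(i)\,\square\,F(i')\cong F\bigl(i\,\square\,i'\bigr)$. It rests on two points: that the binary monoidal constraints of $F$ are invertible --- so that the corner objects $F(A)\otimes F(C)$, $F(A)\otimes F(D)$, $F(B)\otimes F(C)$, $F(B)\otimes F(D)$ of the source and target of $F(i)\,\square\,F(i')$ coincide with $F(A\otimes C)$, $F(A\otimes D)$, $F(B\otimes C)$, $F(B\otimes D)$ --- and that $F$ preserves the push-out defining the domain, which is automatic since $F$ is a left adjoint. The invertibility of the constraints is exactly what the \emph{lax-lax} hypothesis provides, forcing the left adjoint to be strong monoidal, so I would take care to spell that implication out. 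The reduction to generators is routine, given the closure properties of (trivial) cofibrations in a cofibrantly generated model category and the cocontinuity of $\square$ in each variable, so I would cite it from the literature on monoidal model categories rather than reprove it here.
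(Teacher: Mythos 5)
Your proposal is correct and follows essentially the same route as the paper's proof: reduce the push-out product axiom to the generating sets $F(I)$ and $F(J)$, observe that the left adjoint of a lax-lax symmetric monoidal adjunction is strong monoidal (the paper cites \cite[Proposition 3.96]{mfsha}) and preserves push-outs, hence preserves push-out products, and then apply the axiom in $\C V$ together with the fact that $F$ preserves (trivial) cofibrations. The only cosmetic difference is that you spell out the identification $F(i)\odot F(i')\cong F(i\odot i')$ in more detail than the paper does.
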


\begin{proof}
Let $I$ and $J$ be sets of generating (trivial) cofibrations of $\C V$. Then $F(I)$ and $F(J)$ are sets of generating (trivial) cofibrations of $\C W$. Denote by $$f\odot g\colon U\otimes Y\bigcup_{U\otimes X}V\otimes X\To V\otimes Y$$ the push-out product of two morphisms $f\colon U\r V$ and $g\colon X\r Y$. In order to check the push-out product axiom for $\C W$, it is enough to prove that the sets $F(I)\odot F(I)$ and $F(I)\odot F(J)$ consist of cofibrations and trivial cofibrations in $\C W$, respectively, compare \cite[Corollary 4.2.5]{hmc}. The monoidal functor $F$ is strong, see \cite[Proposition 3.96]{mfsha}. It also preserves push-outs, since it is a left adjoint. Hence, $F$ preserves push-out products. In particular,
\begin{align*}
F(I)\odot F(I)&=F(I\odot I),&
F(I)\odot F(J)&=F(I\odot J).
\end{align*}
These sets consist of cofibrations and trivial cofibrations, respectively, since $\C V$ satisfies the push-out product axiom and $F$ preserves (trivial) cofibrations.
\end{proof}

\section{Main theorem for operads of groupoids}\label{grd}

Let $\operatorname{Gpd}$ be the model category of small groupoids. Morphisms are functors and weak equivalences are equivalences of categories.  A cofibration is a functor which is injective on objects. Fibrations are functors satisfying the isomorphism lifting property. Recall that $\varphi\colon G\r H$ has the \emph{isomorphism lifting property} if for any object $x$ in $G$ and any isomorphism $f\colon\varphi(x)\r y$ in $H$ there exists an isomorphism $f'\colon x\r x'$ in $G$ with $\varphi(f')=f$, in particular $\varphi(x')=y$. 

Trivial fibrations have a simple characterization.

\begin{lem}\label{tf}
A trivial fibration in $\operatorname{Gpd}$ is a fully-faithful functor surjective on objects.
\end{lem}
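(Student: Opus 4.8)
The plan is to read the lemma as a characterization and prove both implications: a functor $\varphi\colon G\r H$ between groupoids is a trivial fibration if and only if it is fully faithful and surjective on objects. Recall that a trivial fibration is by definition a morphism that is simultaneously a fibration, i.e.~has the isomorphism lifting property, and a weak equivalence, i.e.~an equivalence of categories.

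For the forward implication, I would start from the fact that an equivalence of categories is in particular fully faithful and essentially surjective, so it only remains to upgrade essential surjectivity to honest surjectivity on objects. Given an object $y$ of $H$, essential surjectivity provides an object $x$ of $G$ and an isomorphism $f\colon\varphi(x)\r y$ in $H$; applying the isomorphism lifting property of $\varphi$ to the object $x$ and the isomorphism $f$ yields an isomorphism $f'\colon x\r x'$ in $G$ with $\varphi(f')=f$, hence $\varphi(x')=y$. Thus $\varphi$ is surjective on objects.

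For the converse, assume $\varphi$ is fully faithful and surjective on objects. Surjectivity on objects is stronger than essential surjectivity, so $\varphi$ is fully faithful and essentially surjective, hence an equivalence of categories, i.e.~a weak equivalence. To check that $\varphi$ is a fibration, take an object $x$ of $G$ and an isomorphism $f\colon\varphi(x)\r y$ in $H$; by surjectivity on objects choose $x'$ in $G$ with $\varphi(x')=y$, and by fullness choose $g\colon x\r x'$ in $G$ with $\varphi(g)=f$. Since $G$ is a groupoid, $g$ is automatically an isomorphism, so it is the desired lift; alternatively, one lifts $f^{-1}$ to some $h\colon x'\r x$ and uses faithfulness of $\varphi$ together with $\varphi(\id{x})=\id{\varphi(x)}$ to conclude $hg=\id{x}$ and $gh=\id{x'}$. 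Therefore $\varphi$ has the isomorphism lifting property, and together with the previous paragraph this finishes the proof.

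I do not expect a genuine obstacle here: the argument is purely formal, bookkeeping the three classes of maps in the folk model structure on $\operatorname{Gpd}$. The only point deserving a moment's attention is verifying that the morphism produced by fullness in the converse is invertible, which is immediate because all morphisms in a groupoid are isomorphisms.
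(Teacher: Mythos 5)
Your proof is correct and fills in exactly the argument the paper leaves implicit: the paper's entire proof is the one-line observation that an equivalence of categories satisfies the isomorphism lifting property if and only if it is surjective on objects, which is precisely the content of your two implications. No difference in approach, just more detail.
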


It is enough to notice that an equivalence of categories satisfies the isomorphism lifting property if and only if it is surjective on objects.

\begin{prop}\label{loes}
The category  $\operatorname{Gpd}$ with the cartesian product is a combinatorial closed symmetric monoidal model category satisfying the monoid axiom where all objects are cofibrant.
\end{prop}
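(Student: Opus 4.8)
\textbf{Proof plan for Proposition \ref{loes}.}
The plan is to verify each listed property in turn, using the well-known model structure on $\operatorname{Gpd}$ described just before the statement. First I would recall that $\operatorname{Gpd}$ is \emph{locally presentable}: groupoids form a full reflective subcategory of $\operatorname{Cat}$, which is locally finitely presentable, and the model structure is cofibrantly generated with an explicit small set of generating (trivial) cofibrations (the inclusion $\varnothing\into *$, the two inclusions of $*\coprod *$ into the walking isomorphism, plus the maps detecting fully-faithfulness); combinatoriality is then immediate. For the \emph{push-out product axiom} I would argue combinatorially in the style of Lemma \ref{ppa}: since cofibrations are exactly the functors injective on objects, it suffices to check the axiom on generating cofibrations, and the cartesian product of an injective-on-objects functor with any groupoid is again injective on objects, while a short direct check handles the trivial case (a generating trivial cofibration times any object stays a trivial cofibration, using that equivalences are closed under products and the explicit form of the generators). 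The \emph{unit axiom} is automatic because the monoidal unit is the terminal groupoid $*$, which is cofibrant, so $*\times(-)=\id{}$ preserves weak equivalences trivially.

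Next, the \emph{monoid axiom}: I would show that any transfinite composition of push-outs of maps of the form $(\text{trivial cofibration})\times H$ is a weak equivalence. The cleanest route is to observe that in $\operatorname{Gpd}$ equivalences of categories are stable under arbitrary coproducts, pushouts along cofibrations (injective-on-objects functors), and transfinite composition — this is essentially the statement that $\operatorname{Gpd}$ is left proper together with an analysis of filtered colimits of equivalences, which holds because objects and hom-sets of such colimits are computed levelwise. Since products with a fixed $H$ preserve the relevant colimits (cartesian product in $\operatorname{Gpd}$ commutes with coproducts and with filtered colimits), every map in the monoid-axiom class is built from equivalences by these stable operations, hence is an equivalence. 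Finally, \emph{all objects are cofibrant}: the unique functor $\varnothing\r G$ is injective on objects for any groupoid $G$, hence a cofibration.

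The main obstacle is the monoid axiom, or more precisely pinning down which colimit-stability properties of equivalences of groupoids one is allowed to invoke. Unlike the push-out product and unit axioms, which reduce to trivial observations about injectivity on objects and the terminal unit, the monoid axiom genuinely requires controlling weak equivalences under transfinite composites of pushouts, and the subtlety is that pushouts in $\operatorname{Gpd}$ are \emph{not} computed as in $\operatorname{Cat}$ (one must reflect back into groupoids). I expect the cleanest fix is to use that the generating trivial cofibrations here are pushouts of the inclusion of $*$ into the walking isomorphism, for which the pushout construction can be described concretely (freely adjoining an isomorphism), and that such maps — and their products with a fixed groupoid, their pushouts, and their transfinite composites — are injective-on-objects equivalences by an explicit van-Kampen-type description of the resulting groupoid. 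A safe alternative, which I would fall back on if the direct argument gets unwieldy, is to note that $\operatorname{Gpd}$ is the full subcategory of fibrant-cofibrant objects (equivalently, all objects) in a suitable left proper combinatorial model category and cite the standard criterion \cite[Lemma 3.5(2)]{ammmc} (Schwede--Shipley) reducing the monoid axiom to left properness plus the push-out product axiom in the presence of all objects being cofibrant.
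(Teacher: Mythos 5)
Your proposal is correct in substance, but it diverges from the paper's proof at the push-out product axiom. The paper does not verify that axiom inside $\operatorname{Gpd}$ at all: it observes that the model structure is transferred along the product-preserving adjunction $\Pi_1\colon\operatorname{Set}^{\Delta^{\op}}\rightleftarrows\operatorname{Grd}\colon\ner$ and invokes Lemma \ref{ppa}, so the axiom is simply inherited from simplicial sets with no computation. You instead reduce to the generating (trivial) cofibrations of Remark \ref{ijgrd} and check pushout products by hand; this does work (the cofibration part is controlled because $\operatorname{Ob}$ preserves both products and pushouts, so the object map of a pushout product of cofibrations is a pushout product of injections of sets, and the three pushout products $f\odot j$ with $f\in I$ are each seen directly to be equivalences injective on objects), but it is exactly the computation the transfer argument is designed to avoid, and you would need to actually carry it out rather than leave it as ``a short direct check.'' For the monoid axiom, your fallback is the right move and is what the paper does, except that no left properness is needed and your primary route (colimit-stability of equivalences, van-Kampen-style descriptions of pushouts) is superfluous: once all objects are cofibrant, a trivial cofibration times any object $H$ is the pushout product with the cofibration $\varnothing\into H$, hence a trivial cofibration by the push-out product axiom, and trivial cofibrations are closed under pushouts and transfinite composition in any model category; this is precisely \cite[Remark 3.4]{ammmc}, which is what the paper cites. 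Your worry about pushouts in $\operatorname{Gpd}$ versus $\operatorname{Cat}$ is therefore moot.
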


\begin{proof}

The model structure on $\operatorname{Gpd}$ is transferred along the following adjoint pair
$$\xymatrix{\operatorname{Set}^{\Delta^{\op}}\ar@<.5ex>[r]^-{\Pi_{1}}&\operatorname{Grd}.\ar@<.5ex>[l]^-{\ner}}$$
Here, $\ner$ is the nerve functor and $\Pi_{1}$ is the fundamental groupoid functor. We regard $\operatorname{Set}^{\Delta^{\op}}$ as a symmetric monoidal model category with the usual model structure and the cartesian product monoidal structure, see \cite[Proposition 4.2.8]{hmc}. Since $\operatorname{Set}^{\Delta^{\op}}$ is cofibrantly generated, then so is $\operatorname{Gpd}$. Moreover, $\operatorname{Gpd}$ is locally presentable (this is an elementary fact from category theory). Hence, $\operatorname{Gpd}$ is a combinatorial model category.

The functor $\Pi_{1}$ is known to preserve products. Therefore, the push-out product axiom for $\operatorname{Grd}$ follows from Lemma \ref{ppa}. All objects are cofibrant by the very definition of cofibration. Hence, the monoid axiom follows from the push-out product axiom, see \cite[Remark 3.4]{ammmc}.
\end{proof}

The main result of this section is the following theorem.

\begin{thm}\label{hepiGpd}
The morphism $\phi^{\operatorname{Gpd}}\colon \mathtt{Ass}^{\operatorname{Gpd}}\r \mathtt{uAss}^{\operatorname{Gpd}}$ in Example \ref{comparison} is a homotopy epimorphism in $\operad{\operatorname{Gpd}}$. 
\end{thm}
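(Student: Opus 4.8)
By Lemma~\ref{basico}, since $\operatorname{Gpd}$ satisfies the hypotheses of Theorem~\ref{modelo} (Proposition~\ref{loes}) and trivially the strong unit axiom (the tensor unit, the one-object one-morphism groupoid, is cofibrant because every object is cofibrant), it suffices to exhibit a $u$-infinity unital associative operad $\mathtt{u}_{\infty}\mathtt{uA}^{\operatorname{Gpd}}$ and to check that the map $\varphi^{\operatorname{Gpd}}\colon\mathtt{uAss}^{\operatorname{Gpd}}\r\mathtt{u}_{\infty}\mathtt{uA}^{\operatorname{Gpd}}$ is a weak equivalence, i.e.~an equivalence of categories arity-wise. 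So the plan is: first build an explicit cofibrant resolution $\phi^{\operatorname{Gpd}}_{\infty}\colon\mathtt{A}^{\operatorname{Gpd}}_{\infty}\into\mathtt{uA}^{\operatorname{Gpd}}_{\infty}$ of $\phi^{\operatorname{Gpd}}$, then factor $\phi^{\operatorname{Gpd}}$ through a $u$-infinity associative operad $\mathtt{u}_{\infty}\mathtt{A}^{\operatorname{Gpd}}$ via the push-out of Remark~\ref{construcciones}, then form the push-out of Definition~\ref{lol}, and finally compute each arity component of the resulting operad and of $\varphi^{\operatorname{Gpd}}$.

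\textbf{Step 1: the resolutions are essentially trivial here.} Because $\operatorname{Gpd}$ is cartesian closed with initial object $\varnothing$ the empty groupoid, Remark~\ref{final} applies: $\mathtt{uAss}^{\operatorname{Gpd}}$ is the terminal operad and $\mathtt{Ass}^{\operatorname{Gpd}}$ is terminal among operads that are $\varnothing$ in arity $0$. Moreover $\mathtt{Ass}^{\operatorname{Gpd}}(n)=\unit$ and $\mathtt{uAss}^{\operatorname{Gpd}}(n)=\unit$ for $n\geq 1$ are already cofibrant objects, and the terminal groupoid $\unit$ is \emph{contractible}, so any operad weakly equivalent to $\mathtt{Ass}^{\operatorname{Gpd}}$ or $\mathtt{uAss}^{\operatorname{Gpd}}$ simply has contractible components in the appropriate arities. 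Thus I expect $\mathtt{Ass}^{\operatorname{Gpd}}$ and $\mathtt{uAss}^{\operatorname{Gpd}}$ themselves, or a very mild cofibrant replacement of $\phi^{\operatorname{Gpd}}$ obtained by factoring $\phi^{\operatorname{Gpd}}(0)\colon\varnothing\r\unit$ (already a cofibration!) and noting $\phi^{\operatorname{Gpd}}(n)=\id{\unit}$ for $n\geq1$, to already be a cofibrant resolution; indeed $\phi^{\operatorname{Gpd}}$ is itself a cofibration between cofibrant operads in the transferred model structure, since it is built by a free-operad cell attachment adding an arity-$0$ generator. The point to verify carefully is that $\phi^{\operatorname{Gpd}}$ is a cofibration in $\operad{\operatorname{Gpd}}$; this follows from the presentation in Example~\ref{sets}, writing $\mathtt{uAss}^{\operatorname{Gpd}}$ as a push-out of $\mathtt{Ass}^{\operatorname{Gpd}}$ along maps of free operads generated by cofibrations of sequences.

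\textbf{Step 2: identify the push-out and compute $\varphi^{\operatorname{Gpd}}$.} With $\mathtt{u}_{\infty}\mathtt{A}^{\operatorname{Gpd}}$ taken to be (a mild modification of) $\mathtt{u}_{\infty}\mathtt{uA}^{\operatorname{Gpd}}$ formed as the push-out in Definition~\ref{lol}, I must show $\varphi^{\operatorname{Gpd}}\colon\mathtt{uAss}^{\operatorname{Gpd}}\r\mathtt{u}_{\infty}\mathtt{uA}^{\operatorname{Gpd}}$ is an arity-wise equivalence of groupoids. Using the explicit description of binary coproducts of operads with free operads (Remark~\ref{binary}) and of push-outs along free maps, each $\mathtt{u}_{\infty}\mathtt{uA}^{\operatorname{Gpd}}(n)$ is a groupoid whose objects are trees decorated by $\mu$'s and two ``unit'' corks $u,u'$ coming from the two copies of $\mathtt{uA}^{\operatorname{Gpd}}$, and whose morphisms encode the relations identifying $\mu\circ_i u$, $\mu\circ_i u'$ with identities and with each other. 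The key computation is that each such groupoid is contractible: it is nonempty (it contains $\mu^{n-1}$, resp.~$u$ in arity $0$) and, crucially, between any two objects there is a unique morphism — this is where the relations from the two pushed-out copies of the unit conspire to kill any potential automorphisms, exactly as in the proof of Proposition~\ref{episet} where $f(u)=g(u)$ was forced. Then $\varphi^{\operatorname{Gpd}}(n)\colon\unit\r\mathtt{u}_{\infty}\mathtt{uA}^{\operatorname{Gpd}}(n)$ is a functor from the terminal groupoid to a contractible groupoid, hence an equivalence, for all $n\geq0$. Applying Lemma~\ref{basico} finishes the proof.

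\textbf{Main obstacle.} The real work is Step~2: showing the components of the double push-out $\mathtt{u}_{\infty}\mathtt{uA}^{\operatorname{Gpd}}(n)$ are contractible groupoids. One must set up the explicit combinatorial model of these groupoids (decorated trees modulo the unit relations), and verify connectedness together with the absence of nontrivial loops; the latter is the delicate part, since a priori composing unit corks into a vertex and sliding them around could generate an automorphism of $\mu^{n-1}$, and one has to check the relations force all such to be identities — a groupoid-theoretic incarnation of the ``a unit, if it exists, is unique'' phenomenon. Everything else (verifying $\phi^{\operatorname{Gpd}}$ is a cofibration, that the strong unit axiom holds, that the various push-outs land in $\operadpc{\operatorname{Gpd}}$) is routine given Proposition~\ref{loes}, Remark~\ref{construcciones}, and the cited results of \cite{htnso2}.
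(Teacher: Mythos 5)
Your reduction via Lemma \ref{basico} and the overall shape (compute the push-out of Definition \ref{lol} and show its components are contractible groupoids) match the paper, but Step 1 contains a genuine error. The claim that $\phi^{\operatorname{Grd}}$ is ``a cofibration between cofibrant operads, built by a free-operad cell attachment adding an arity-$0$ generator'' is false on both counts. By Corollary \ref{cofibrantgrd}, cofibrancy of $\mathtt{Ass}^{\operatorname{Grd}}$ would force $\mathtt{Ass}^{\operatorname{Set}}$ to be a retract of a free set-operad; a section would have to send $\mu$ to a tree $T$ with $T\circ_1T=T\circ_2T$, which never holds in a free operad, so $\mathtt{Ass}^{\operatorname{Grd}}$ is not cofibrant. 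Moreover, freely attaching an arity-$0$ generator to $\mathtt{Ass}^{\operatorname{Grd}}$ yields the coproduct $\mathtt{Ass}^{\operatorname{Grd}}\coprod\mathcal F_{\operatorname{Grd}}(\unit[0])$, whose arity-$n$ component is a \emph{discrete} groupoid with one object per corolla-with-corks, not the terminal groupoid $\mathtt{uAss}^{\operatorname{Grd}}(n)$: the relations $\mu\circ_1u=\id{}=\mu\circ_2u$ are imposed by a quotient, not by a cell attachment, and one checks using Lemma \ref{cofibrationgrd} that $\phi^{\operatorname{Grd}}$ is not a cofibration (any section $s$ over a free coproduct inclusion would need $s(u)$ with $s(\mu)\circ_is(u)=\id{}$, forcing $s(u)$ into $\mathcal O(0)$, which must be empty because $\mathcal O$ retracts onto $\mathtt{Ass}^{\operatorname{Set}}$). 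You are also conflating aritywise cofibrations in $\operatorname{Grd}$ with cofibrations in $\operad{\operatorname{Grd}}$: only fibrations and weak equivalences are defined aritywise in the transferred structure. This is exactly why the paper must \emph{construct} the factorization, taking $\mathtt{u}_{\infty}\mathtt{A}^{\operatorname{Grd}}$ to be the levelwise contractible operad with object operad $\mathtt{Ass}^{\operatorname{Set}}\coprod\mathcal F(\{u\}[0])$, which is a cofibration under $\mathtt{Ass}^{\operatorname{Grd}}$ by Lemma \ref{cofibrationgrd}.

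The second problem is that the point you flag as the ``main obstacle'' is in fact the entire content of the proof, and your proposal contains no mechanism for resolving it. A push-out of groupoid operads can create new morphisms and automorphisms, and ``the relations conspire to kill any potential automorphisms'' is an assertion of the conclusion, not an argument. The paper's device (Lemmas \ref{ooo} and \ref{uuu}) is to sidestep the direct combinatorial analysis: define a candidate $\mathcal U$ as the levelwise contractible operad with objects $\mathtt{uAss}^{\operatorname{Set}}\coprod\mathcal F(\{u'\}[0])$, observe that $\mathcal U$ is a push-out of $\mathtt{uAss}^{\operatorname{Grd}}$ along the generating trivial cofibration $\mathcal F_{\operatorname{Grd}}(j[0])$ concentrated in arity $0$ (so contractibility is automatic), and then verify by a universal-property computation --- using the generators and relations of $\mathtt{u}_{\infty}\mathtt{A}^{\operatorname{Grd}}$ from Lemma \ref{generam} and forcing the value of the mediating map on the isomorphism $u\To u'$ exactly as $f(u)=g(u)$ was forced in Proposition \ref{episet} --- that $\mathcal U$ satisfies the universal property of the push-out in Definition \ref{lol}. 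Without this identification, or a substitute for it, Step 2 remains a gap.
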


This theorem follows from Lemma \ref{basico} above and Lemma \ref{uuu} below.

\begin{rem}\label{ijgrd}
The sets of generating (trivial) cofibrations of $\operatorname{Gpd}$ obtained by taking fundamental groupoids on the usual sets of generating (trivial) cofibrations of $\operatorname{Set}^{\Delta^{\op}}$ are too big. We can alternatively take
\begin{align*}
I&=\{\varnothing\into e,\;i\colon \{e,e'\}\into E,\;p\colon \mathbb{Z}\into e\},& 
J&=\{j\colon e\st{\sim}\into E\}.
\end{align*}
Here $e$ is the final groupoid, which consists of only one object and one morphism (the identity), $\mathbb Z$ is the groupoid with one object with automorphism group $\mathbb Z$, and $E$ is the groupoid with two isomorphic objects, $e\cong e'$, with trivial automorphism groups. The functor $i$ is the inclusion of the discrete subgroupoid formed by the two objects, and $j$ is the inclusion of an object. Recall that a groupoid $G$ is \emph{discrete} if the only morphisms in $G$ are the indentities.

Indeed, a functor satisfying the right lifting property with respect to $\varnothing\into e$, $i$, or $p$, is a functor surjective on objects, full, or faithful, respectively.
\end{rem}

\begin{rem}\label{contractible}
Limits are easier than colimits in the category of groupoids, at least easier than non-filtered colimits. However, colimits behave well on objects, in the sense that the set of objects of the colimit of a diagram of groupoids is the colimit of the diagram of object sets.
This follows from the fact that the `set of objects' functor from groupoids to sets has a right adjoint,
$$\xymatrix@C=40pt{\operatorname{Grd}\ar@<.5ex>[r]^-{\operatorname{Ob}}&\operatorname{Set}.\ar@<.5ex>[l]^-{\text{contractible}}}$$
The right adjoint, called `contractible groupoid' functor, sends the empty set to the empty groupoid, and any non-empty set $S$ to the contractible groupoid with object set $S$. Recall that a groupoid $G$ is \emph{contractible} if it is equivalent to $e$, i.e.~if it has a non-empty set of objects and there exists a unique isomorphism between any to objects of $G$. Hence, morphisms into contractible groupoids are usually denoted by simply indicating the source, the target, and the map between object sets.

The `contractible groupoid' and the `set of objects' functors preserve products, hence they induce an adjoint pair on operads,
$$\xymatrix@C=40pt{\operad{\operatorname{Grd}}\ar@<.5ex>[r]^-{\operatorname{Ob}}&\operad{\operatorname{Set}}.\ar@<.5ex>[l]^-{\text{contractible}}}$$
In particular, the `set of objects' functor  also preserves colimits at the level of operads. This fact is used in the proof of the following lemma.
\end{rem}

In this section we consider free operads of sets and free operads of groupoids. For the sake of simplicity, we omit the subscript from the free operad functor for sets $\mathcal F=\mathcal F_{\operatorname{Set}}$, but not for groupoids $\mathcal F_{\operatorname{Grd}}$, in order to avoid confusion. These free operad functors and the object set functor commute,
$\operatorname{Ob}\mathcal F_{\operatorname{Grd}}=\mathcal F\operatorname{Ob}$, since the latter preserves cartesian products, compare Remark \ref{free}.

\begin{lem}\label{cofibrationgrd}
A morphism is a cofibration in $\operad{\operatorname{Grd}}$  if and only if it is a retract of a morphism $f\colon \mathcal O\r\mathcal P$ such that $\operatorname{Ob}(f)\colon \operatorname{Ob}(\mathcal O)\r\operatorname{Ob}(\mathcal P)=\operatorname{Ob}(\mathcal O)\amalg \mathcal F(V)$ is an inclusion of a factor of a binary coproduct such that the other factor  is a free operad in $\operad{\operatorname{Set}}$.
\end{lem}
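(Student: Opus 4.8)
The plan is to reduce the cofibration characterization to the description of generating cofibrations in Remark~\ref{generatriz}, combined with the explicit description of coproducts with free operads in Remark~\ref{binary} and the adjoint pair $\operatorname{Ob}\colon\operad{\operatorname{Grd}}\rightleftarrows\operad{\operatorname{Set}}$ from Remark~\ref{contractible}. By the small object argument, every cofibration in $\operad{\operatorname{Grd}}$ is a retract of a transfinite composition of push-outs of the generating cofibrations $\mathcal F_{\operatorname{Grd}}(I_{\mathbb N})$, where $I=\{\varnothing\into e,\ i\colon\{e,e'\}\into E,\ p\colon\mathbb Z\into e\}$ as in Remark~\ref{ijgrd}. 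Since the class of morphisms $f$ with the stated property is closed under retracts by construction, it suffices to show it is closed under the operations building a relative cell complex: it contains $\varnothing\to\mathcal F_{\operatorname{Grd}}(I_{\mathbb N})$-pushouts and is closed under composition and transfinite composition.

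First I would establish the building blocks. Applying $\operatorname{Ob}$ to the free groupoid functor: since $\operatorname{Ob}$ is a left adjoint on operads (Remark~\ref{contractible}) it preserves colimits, so $\operatorname{Ob}\mathcal F_{\operatorname{Grd}}(V)=\mathcal F_{\operatorname{Set}}(\operatorname{Ob}V)$ for a sequence of groupoids $V$. For the generator $\varnothing\into e$ concentrated in arity $n$, a pushout along $\mathcal F_{\operatorname{Grd}}(\varnothing\into e)[n]$ is, by the very construction of free maps, exactly the inclusion $\mathcal O\to\mathcal O\coprod\mathcal F_{\operatorname{Grd}}(e[n])$, and applying $\operatorname{Ob}$ gives $\operatorname{Ob}\mathcal O\to\operatorname{Ob}\mathcal O\coprod\mathcal F_{\operatorname{Set}}(\{*\}[n])$, which is the inclusion of a free factor of the required type. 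For the generator $i\colon\{e,e'\}\into E$: on objects this is the identity $\{*,*'\}\to\{*,*'\}$ (since $E$ has two objects), while on morphisms it is not surjective. A pushout of $\mathcal F_{\operatorname{Grd}}(i[n])$ therefore induces, on $\operatorname{Ob}$, an \emph{isomorphism} $\operatorname{Ob}\mathcal O\st{\cong}\to\operatorname{Ob}\mathcal P$ — here the free factor is $\mathcal F_{\operatorname{Set}}(\varnothing)=\ast$, the initial operad, and the coproduct inclusion with the trivial factor is the identity. Likewise $p\colon\mathbb Z\into e$ is the identity on objects, so pushouts along $\mathcal F_{\operatorname{Grd}}(p[n])$ are isomorphisms on $\operatorname{Ob}$. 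Thus in all three cases the pushout lies in the stated class.

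Next I would check closure under composition. Suppose $f\colon\mathcal O\to\mathcal P$ has $\operatorname{Ob}(f)$ the inclusion of a factor $\operatorname{Ob}\mathcal P=\operatorname{Ob}\mathcal O\coprod\mathcal F_{\operatorname{Set}}(V)$, and $g\colon\mathcal P\to\mathcal Q$ has $\operatorname{Ob}\mathcal Q=\operatorname{Ob}\mathcal P\coprod\mathcal F_{\operatorname{Set}}(W)$. Then $\operatorname{Ob}(gf)\colon\operatorname{Ob}\mathcal O\to(\operatorname{Ob}\mathcal O\coprod\mathcal F_{\operatorname{Set}}(V))\coprod\mathcal F_{\operatorname{Set}}(W)$; the key point is the identity $\mathcal F_{\operatorname{Set}}(V)\coprod\mathcal F_{\operatorname{Set}}(W)\cong\mathcal F_{\operatorname{Set}}(V\amalg W)$ in $\operad{\operatorname{Set}}$ together with associativity of coproducts, which exhibits $\operatorname{Ob}(gf)$ again as a free-factor inclusion with factor $\mathcal F_{\operatorname{Set}}(V\amalg W)$. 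The same argument, using that $\operatorname{Ob}$ commutes with the relevant filtered colimits and that $\mathcal F_{\operatorname{Set}}$ does too, handles transfinite composition: the directed colimit of free operads on growing sequences is the free operad on the colimit sequence. Finally, one needs the converse: a morphism of the stated form is a cofibration. This follows because such a morphism is obtained from $\mathcal O$ by freely attaching object-cells (the $\varnothing\into e$ generators) and nothing else at the level of objects, and the lifting property against trivial fibrations — which by Lemma~\ref{tf} are the fully faithful functors surjective on objects, arity-wise — can be verified directly: surjectivity on objects gives the lift on the new object-generators, and full faithfulness gives it uniquely on morphisms.

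The main obstacle I anticipate is the bookkeeping in the converse direction and, relatedly, making precise that "the other factor is a free operad of sets" is stable under the colimit constructions with the \emph{groupoid} free operad sitting on top — i.e.\ controlling how $\operatorname{Ob}$ of a pushout $\mathcal O\coprod_{\mathcal F_{\operatorname{Grd}}(A)}\mathcal F_{\operatorname{Grd}}(B)$ decomposes, using that $\operatorname{Ob}$ preserves the pushout and that the free operad functors intertwine via $\operatorname{Ob}\mathcal F_{\operatorname{Grd}}=\mathcal F_{\operatorname{Set}}\operatorname{Ob}$. Once that compatibility is pinned down the rest is a routine induction along the cellular filtration.
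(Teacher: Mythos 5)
Your proposal is correct and follows essentially the same route as the paper: for the ``only if'' direction you observe that $i$ and $p$ are bijective on objects while $\varnothing\into e$ freely adjoins an object, so (using that $\operatorname{Ob}$ preserves colimits) relative $\mathcal F_{\operatorname{Grd}}(I_{\mathbb N})$-cell complexes have the stated form; for the ``if'' direction you verify the left lifting property against trivial fibrations by first lifting on objects via surjectivity and the freeness of the second factor, then extending uniquely to morphisms via full faithfulness. This matches the paper's argument, which likewise avoids showing directly that such an $f$ is a relative cell complex and instead checks the lifting property.
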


\begin{proof}
Any relative $\mathcal F_{\operatorname{Grd}}(I_{\mathbb N})$-cell complex is as $f$ in the statement. Indeed, on the one hand, the functors $i,p\in I$ are the identity on objects, hence a push-out along $\mathcal F_{\operatorname{Grd}}(i[n])$ or $\mathcal F_{\operatorname{Grd}}(p[n])$ is the identity on objects. On the other hand, a push-out along $\mathcal F_{\operatorname{Grd}}(\varnothing\into e[n])$ adds freely a new object in arity $n$.  Hence, the `only if' part follows.

The converse is also true, i.e.~any morphism as $f$ is a relative $\mathcal F_{\operatorname{Grd}}(I_{\mathbb N})$-cell complex, but this is complicated to show directly. In order to prove the `if' part, it is easier to check that $f$ in the statement satisfies the left lifting property with respect to trivial fibrations. We therefore consider a commutative diagram of solid arrows in $\operad{\operatorname{Grd}}$ as follows,
$$\xymatrix{\mathcal O\ar[r]^{g}\ar[d]_{f}&\mathcal Q\ar@{->>}[d]_{\sim}^{q}\\
\mathcal P\ar[r]_{h}\ar@{-->}[ru]^{l}&\mathcal R}$$
Here, $q$ is a trivial fibration. In order to obtain a lifting $l$, we first consider the diagram of objects
$$\xymatrix{\operatorname{Ob}(\mathcal O)\ar[r]^{\operatorname{Ob}(g)}\ar[d]_{\operatorname{Ob}(f)}&\operatorname{Ob}(\mathcal Q)\ar@{->>}[d]^{\operatorname{Ob}(q)}\\
\operatorname{Ob}(\mathcal O)\amalg \mathcal F(V)\ar[r]_-{\operatorname{Ob}(h)}\ar@{-->}[ru]^{l'}&\operatorname{Ob}(\mathcal R)}$$
Here, $\operatorname{Ob}(q)$ is levelwise surjective by Lemma \ref{tf}. Hence, it is easy to obtain a lifting $l'$ in $\operad{\operatorname{Set}}$. Define $l'$ as $\operatorname{Ob}(g)$ on the first factor. On the second factor, we choose preimages of the objects $h(V(n))$ along $q(n)$, $n\geq 0$, and extend to a morphism from the free operad $\mathcal F(V)$. Finally, since $q(n)$ is fully faithful, there is a unique functor  $l(n)\colon\mathcal{P}(n)\r\mathcal{Q}(n)$ given by $l'(n)$ on objects and such that $q(n)l(n)=h(n)$, $n\geq 0$. One can easily check that the sequence of functors $\{l(n)\}_{n\geq 0}$ is an operad morphism $l$ which also satisfies $lf=g$.
\end{proof}

\begin{cor}\label{cofibrantgrd}
An operad $\mathcal O$ in $\operad{\operatorname{Grd}}$ is cofibrant if and only if the operad  of object sets $\operatorname{Ob}(\mathcal O)$  is a retract of a free operad in $\operad{\operatorname{Set}}$.
\end{cor}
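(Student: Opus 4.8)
The plan is to apply Lemma~\ref{cofibrationgrd} to the unique morphism $u\colon\mathcal{I}\r\mathcal{O}$ out of the initial operad $\mathcal{I}$ of $\operad{\operatorname{Grd}}$ (so $\mathcal{I}(1)=e$ and $\mathcal{I}(n)=\varnothing$ for $n\neq 1$), using that $\mathcal{O}$ is cofibrant exactly when $u$ is a cofibration. Two observations will be used throughout: the functor $\operatorname{Ob}\colon\operad{\operatorname{Grd}}\r\operad{\operatorname{Set}}$ preserves colimits, being a left adjoint (Remark~\ref{contractible}), and preserves retracts, being a functor; and $\operatorname{Ob}(\mathcal{I})$ is the initial operad of sets, which is free---it is $\mathcal{F}$ of the sequence that is $\varnothing$ in every arity.

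For the ``only if'' part, suppose $\mathcal{O}$ is cofibrant. By Proposition~\ref{loes} and Theorem~\ref{modelo}, $\operad{\operatorname{Grd}}$ is cofibrantly generated with generating cofibrations $\mathcal{F}_{\operatorname{Grd}}(I_{\mathbb{N}})$, so $u$ is a retract of a relative $\mathcal{F}_{\operatorname{Grd}}(I_{\mathbb{N}})$-cell complex $\bar u\colon\mathcal{I}\into\mathcal{O}^{\infty}$. I would then apply $\operatorname{Ob}$ and use the computation from the proof of Lemma~\ref{cofibrationgrd}: push-outs along $\mathcal{F}_{\operatorname{Grd}}(i[n])$ or $\mathcal{F}_{\operatorname{Grd}}(p[n])$ are isomorphisms on object operads, while a push-out along $\mathcal{F}_{\operatorname{Grd}}(\varnothing\into e[n])$ adjoins one free generator in arity $n$. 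Since $\operatorname{Ob}(\mathcal{I})$ is already free and $\operatorname{Ob}$ commutes with the transfinite composition, $\operatorname{Ob}(\mathcal{O}^{\infty})$ is a free operad of sets, whence its retract $\operatorname{Ob}(\mathcal{O})$ is a retract of a free operad of sets. (One really has to open up the proof of Lemma~\ref{cofibrationgrd} here: its bare statement would only present $\operatorname{Ob}(\mathcal{O})$ as a retract of a coproduct of a free operad with something.)

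For the ``if'' part, suppose $\operatorname{Ob}(\mathcal{O})$ is a retract of a free operad $\mathcal{G}=\mathcal{F}(V)$ in $\operad{\operatorname{Set}}$, with operad morphisms $s\colon\operatorname{Ob}(\mathcal{O})\r\mathcal{G}$, $r\colon\mathcal{G}\r\operatorname{Ob}(\mathcal{O})$ and $rs=1_{\operatorname{Ob}(\mathcal{O})}$. I would build an operad $\mathcal{P}$ in $\operad{\operatorname{Grd}}$ by letting $\mathcal{P}(n)$ be the groupoid with object set $\mathcal{G}(n)$ and hom-sets $\mathcal{P}(n)(x,y):=\mathcal{O}(n)(r(x),r(y))$, with composition and identities borrowed from $\mathcal{O}(n)$, and composition laws given by those of $\mathcal{G}$ on objects and those of $\mathcal{O}$ on morphisms. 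This is a bona fide operad of groupoids because $r$ respects composition laws, so $r(x\circ_i y)=r(x)\circ_i r(y)$ matches sources and targets; moreover $\operatorname{Ob}(\mathcal{P})=\mathcal{G}$. The assignments $a\mapsto s(a)$ on objects and the identity on morphisms (legitimate since $\mathcal{P}(n)(sa,sb)=\mathcal{O}(n)(rsa,rsb)=\mathcal{O}(n)(a,b)$) form an operad morphism $\mathcal{O}\r\mathcal{P}$ because $s$ is one; symmetrically $x\mapsto r(x)$, identity on morphisms, gives an operad morphism $\mathcal{P}\r\mathcal{O}$; and their composite is $1_{\mathcal{O}}$. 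These morphisms are compatible with the canonical morphisms from $\mathcal{I}$, so $u$ is a retract, in the arrow category of $\operad{\operatorname{Grd}}$, of $\mathcal{I}\r\mathcal{P}$. Since $\operatorname{Ob}(\mathcal{I}\r\mathcal{P})$ is the inclusion of the factor $\operatorname{Ob}(\mathcal{I})$ in $\operatorname{Ob}(\mathcal{I})\coprod\mathcal{G}=\mathcal{G}$ (as $\operatorname{Ob}(\mathcal{I})$ is initial) and the complementary factor $\mathcal{G}$ is free, Lemma~\ref{cofibrationgrd} shows $\mathcal{I}\r\mathcal{P}$, hence its retract $u$, is a cofibration; that is, $\mathcal{O}$ is cofibrant.

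The main obstacle is the ``if'' part: one must produce, from a purely set-level retraction, a groupoid-level operad $\mathcal{P}$ with the prescribed object operad of which $\mathcal{O}$ is an operadic retract. The construction above is the natural one, and the verification that $a\mapsto s(a)$ and $x\mapsto r(x)$ are morphisms of operads, though routine, hinges entirely on $s$ and $r$ commuting with the operadic composition laws---this is exactly what makes the ``objects from $\mathcal{G}$, morphisms from $\mathcal{O}$'' recipe consistent.
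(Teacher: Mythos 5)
Your proof is correct and follows the route the paper intends: the corollary is stated as an immediate consequence of Lemma~\ref{cofibrationgrd} applied to the unique map from the initial operad, and you supply exactly the two details the paper leaves implicit (replacing the retract datum by a retract of an honest relative $\mathcal{F}_{\operatorname{Grd}}(I_{\mathbb N})$-cell complex under $\mathcal I$ for the ``only if'' direction, and the pullback-along-$r$ construction of $\mathcal P$ for the ``if'' direction). Both verifications are sound, so there is nothing to correct.
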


We can now easily define a $u$-infinity associative operad of groupoids.

% \begin{defn}
% The \emph{$A$-infinity operad of groupoids} $\mathtt{A}^{\operatorname{Grd}}_{\infty}$ is the operad such that $\operatorname{Op}(\mathtt{A}^{\operatorname{Grd}}_{\infty})=\mathcal{F}(\{\mu\}[2])$ is freely generated by an object $\mu$ in arity $2$ and $\mathtt{A}^{\operatorname{Grd}}_{\infty}(n)$ is contractible in arities $n>0$.
% 
% The \emph{unital $A$-infinity operad of groupoids} $\mathtt{uA}^{\operatorname{Grd}}_{\infty}$ is the operad whose operad of objects $\operatorname{Op}(\mathtt{uA}^{\operatorname{Grd}}_{\infty})=\mathcal{F}(\{\mu\}[2])\amalg\mathcal{F}(\{u\}[0])$ is freely generated by two objects $\mu$ and $u$ in arities $2$ and $0$, respectively, and $\mathtt{uA}^{\operatorname{Grd}}_{\infty}(n)$ is contractible in all arities.
% \end{defn}

\begin{defn}
The \emph{$u$-infinity associative operad $\mathtt{u}_{\infty}\mathtt{A}^{\operatorname{Grd}}$} in $\operad{\operatorname{Grd}}$ is the levelwise contractible operad with operad of objects $\operatorname{Ob}(\mathtt{u}_{\infty}\mathtt{A}^{\operatorname{Grd}})=\mathtt{Ass}^{\operatorname{Set}}\amalg \mathcal F(\{u\}[0])$.
\end{defn}

\begin{lem}\label{esuiu}
The operad $\mathtt{u}_{\infty}\mathtt{A}^{\operatorname{Grd}}$  in the previous definition is indeed a $u$-infinity associative operad in the sense of Definition \ref{uiu}. The morphism ${\bar\phi^{\operatorname{Grd}}_{\infty}}\colon \mathtt{Ass}^{\operatorname{Grd}}\into \mathtt{u}_{\infty}\mathtt{A}^{\operatorname{Grd}}$ is given on objects by the inclusion of the first factor of the coproduct.
\end{lem}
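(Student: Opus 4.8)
The plan is to realise $\mathtt{u}_{\infty}\mathtt{A}^{\operatorname{Grd}}$, together with the two morphisms around it, as the image under the `contractible groupoid' functor of Remark \ref{contractible} of a factorisation of $\phi^{\operatorname{Set}}$ in $\operad{\operatorname{Set}}$. First I would record that the functor $\operatorname{Set}\to\operatorname{Grd}$ sending a set to the contractible groupoid on it preserves products, so it extends to a functor $\operad{\operatorname{Set}}\to\operad{\operatorname{Grd}}$, and that $\operatorname{Ob}$ composed with this extension is the identity functor. Under this functor $\mathtt{Ass}^{\operatorname{Set}}\mapsto\mathtt{Ass}^{\operatorname{Grd}}$ and $\mathtt{uAss}^{\operatorname{Set}}\mapsto\mathtt{uAss}^{\operatorname{Grd}}$ --- the sequences underlying $\mathtt{Ass}^{\operatorname{Grd}}$ and $\mathtt{uAss}^{\operatorname{Grd}}$ consist of copies of $e$ and of $\varnothing$, on which there is a unique operad structure --- while $\mathtt{Ass}^{\operatorname{Set}}\coprod\mathcal F(\{u\}[0])\mapsto\mathtt{u}_{\infty}\mathtt{A}^{\operatorname{Grd}}$ by the very definition of $\mathtt{u}_{\infty}\mathtt{A}^{\operatorname{Grd}}$, once one observes that every arity component of $\mathtt{Ass}^{\operatorname{Set}}\coprod\mathcal F(\{u\}[0])$ is non-empty --- it contains the image of $\mu^{n-1}$ for $n\geq 1$ and of $u$ for $n=0$ --- so that $\mathtt{u}_{\infty}\mathtt{A}^{\operatorname{Grd}}$ is indeed levelwise contractible. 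In $\operad{\operatorname{Set}}$ there is the factorisation
$$\mathtt{Ass}^{\operatorname{Set}}\;\xrightarrow{\;\iota\;}\;\mathtt{Ass}^{\operatorname{Set}}\coprod\mathcal F(\{u\}[0])\;\xrightarrow{\;(\phi^{\operatorname{Set}},\,g')\;}\;\mathtt{uAss}^{\operatorname{Set}},$$
where $\iota$ is the inclusion of the first factor and the second arrow is the one occurring in the presentation of $\mathtt{uAss}^{\operatorname{Set}}$ in Example \ref{sets}; by the universal property of the coproduct its composite with $\iota$ is $\phi^{\operatorname{Set}}$.

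Applying the `contractible groupoid' functor to this factorisation I obtain
$$\mathtt{Ass}^{\operatorname{Grd}}\;\xrightarrow{\;\bar\phi^{\operatorname{Grd}}_{\infty}\;}\;\mathtt{u}_{\infty}\mathtt{A}^{\operatorname{Grd}}\;\longrightarrow\;\mathtt{uAss}^{\operatorname{Grd}},$$
whose composite is $\phi^{\operatorname{Grd}}$ by functoriality; it then remains to check that $\bar\phi^{\operatorname{Grd}}_{\infty}$ is a cofibration and that the second map is a weak equivalence. For the cofibration, since $\operatorname{Ob}$ composed with the `contractible groupoid' functor is the identity, $\operatorname{Ob}(\bar\phi^{\operatorname{Grd}}_{\infty})$ is precisely $\iota$, i.e.\ the inclusion of the factor $\mathtt{Ass}^{\operatorname{Set}}=\operatorname{Ob}(\mathtt{Ass}^{\operatorname{Grd}})$ of the binary coproduct $\mathtt{Ass}^{\operatorname{Set}}\coprod\mathcal F(\{u\}[0])$, whose complementary factor $\mathcal F(\{u\}[0])$ is a free operad in $\operad{\operatorname{Set}}$; Lemma \ref{cofibrationgrd} then shows that $\bar\phi^{\operatorname{Grd}}_{\infty}$ is a cofibration, and this simultaneously proves the last assertion of the lemma. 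For the weak equivalence, both $\mathtt{u}_{\infty}\mathtt{A}^{\operatorname{Grd}}$ and $\mathtt{uAss}^{\operatorname{Grd}}$ are levelwise contractible groupoids (the former by the observation above, the latter since it is $e$ in every arity), and any functor between contractible groupoids is an equivalence of categories; hence $\mathtt{u}_{\infty}\mathtt{A}^{\operatorname{Grd}}\to\mathtt{uAss}^{\operatorname{Grd}}$ is a levelwise weak equivalence, and therefore a weak equivalence in $\operad{\operatorname{Grd}}$ by Theorem \ref{modelo}.

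The argument is essentially formal; the only step that needs genuine care is the bookkeeping in the first paragraph --- recognising $\mathtt{Ass}^{\operatorname{Grd}}$, $\mathtt{uAss}^{\operatorname{Grd}}$ and $\mathtt{u}_{\infty}\mathtt{A}^{\operatorname{Grd}}$ as the images of the corresponding operads of sets under the `contractible groupoid' functor, and matching the three arrows of the set-level factorisation with the maps named in the lemma and in Definition \ref{uiu}. I do not anticipate any real obstacle beyond this.
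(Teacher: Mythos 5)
Your proof is correct and follows essentially the same route as the paper: the cofibration claim comes from Lemma \ref{cofibrationgrd} and the weak equivalence from levelwise contractibility, exactly as in the paper's (three-sentence) proof. The only difference is cosmetic: the paper identifies the composite with $\phi^{\operatorname{Grd}}$ directly from the fact that $\mathtt{uAss}^{\operatorname{Grd}}$ is the final operad (Remark \ref{final}), whereas you route everything through the `contractible groupoid' functor of Remark \ref{contractible}; both are fine.
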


\begin{proof}
The morphism ${\bar\phi^{\operatorname{Grd}}_{\infty}}$ is a cofibration by Lemma \ref{cofibrationgrd}. Moreover, the unique morphism $\mathtt{u}_{\infty}\mathtt{A}^{\operatorname{Grd}}\r \mathtt{uAss}^{\operatorname{Grd}}$ is a weak equivalence since $\mathtt{u}_{\infty}\mathtt{A}^{\operatorname{Grd}}$ is levelwise contractible. The composition of these morphisms is $\phi^{\operatorname{Grd}}$ since $\mathtt{uAss}^{\operatorname{Grd}}$ is final in $\operad{\operatorname{Grd}}$.
\end{proof}

\begin{rem}\label{precisar}
We now describe the operad of objects of $\mathtt{u}_{\infty}\mathtt{A}^{\operatorname{Grd}}$ following Remarks \ref{binary} and \ref{sets}. The set $\operatorname{Ob}(\mathtt{u}_{\infty}\mathtt{A}^{\operatorname{Grd}})(n)$, for $n>1$, can be identified with the set of corollas with $n$ leaves, at least two branches, and possibly corks, e.g.
$$
\mu^{4}(\id{},u,u,\id{},\id{})\quad=\begin{array}{c}
\scalebox{.7} % Change this value to rescale the drawing.
{
\begin{pspicture}(0,-1.05)(1.62,1.05)
\psline[linewidth=0.02cm](0.8,-0.03)(0.8,-1.03)
\psline[linewidth=0.02cm](0.8,-0.03)(0.0,0.97)
\psline[linewidth=0.02cm](0.4,0.97)(0.8,-0.03)
\psline[linewidth=0.02cm](0.8,0.97)(0.8,-0.03)
\psline[linewidth=0.02cm](0.8,-0.03)(1.2,0.97)
\psline[linewidth=0.02cm](0.8,-0.03)(1.6,0.97)
\psdots[dotsize=0.18](0.8,-0.03)
\psdots[dotsize=0.18](0.4,0.97)
\psdots[dotsize=0.18](0.8,0.97)
\end{pspicture} 
}
\end{array}\in\quad \operatorname{Ob}(\mathtt{u}_{\infty}\mathtt{A}^{\operatorname{Grd}}(3)),$$
A \emph{branch} of a tree is an edge adjacent to a leaf or a cork. For $n=0,1$, in addition to the corollas with $n$ leaves, at least two branches, and possibly corks,  we have
$$
u=\begin{array}{c}
\scalebox{.7} % Change this value to rescale the drawing.
{
\begin{pspicture}(0.6,-0.61)(.6,0)
\psline[linewidth=0.02cm](0.6,0)(0.6,-0.61)
\psdots[dotsize=0.18](0.6,-0.01)
\end{pspicture}
}\end{array}\in \operatorname{Ob}(\mathtt{u}_{\infty}\mathtt{A}^{\operatorname{Grd}}(0)),\qquad\qquad
\id{}=\begin{array}{c}
\scalebox{.7} % Change this value to rescale the drawing.
{
\begin{pspicture}(0.6,-0.61)(.6,0)
\psline[linewidth=0.02cm](0.6,0)(0.6,-0.61)
\end{pspicture}
}\end{array}\in \operatorname{Ob}(\mathtt{u}_{\infty}\mathtt{A}^{\operatorname{Grd}}(1)).
$$
By requiring at least two branches we are explicitly excluding the following two corollas,
$$
\begin{array}{c}
\scalebox{.7} % Change this value to rescale the drawing.
{
\begin{pspicture}(0.8,-1.05)(.8,1.05)
\psline[linewidth=0.02cm](0.8,-0.03)(0.8,-1.03)
\psline[linewidth=0.02cm](0.8,0.97)(0.8,-0.03)
\psdots[dotsize=0.18](0.8,-0.03)
\end{pspicture} 
}
\end{array},\qquad\qquad
\begin{array}{c}
\scalebox{.7} % Change this value to rescale the drawing.
{
\begin{pspicture}(0.8,-1.05)(.8,1.05)
\psline[linewidth=0.02cm](0.8,-0.03)(0.8,-1.03)
\psline[linewidth=0.02cm](0.8,0.97)(0.8,-0.03)
\psdots[dotsize=0.18](0.8,-0.03)
\psdots[dotsize=0.18](0.8,0.97)
\end{pspicture} 
}
\end{array}.
$$

The composition laws  (between trees different from $\id{}=|$) are given by grafting and then contracting the newly created inner edge,
$$
\begin{array}{c}
\scalebox{.7} % Change this value to rescale the drawing.
{
\begin{pspicture}(0,-1.05)(1.62,1.05)
\psline[linewidth=0.02cm](0.8,-0.03)(0.8,-1.03)
\psline[linewidth=0.02cm](0.8,-0.03)(0.0,0.97)
\psline[linewidth=0.02cm](0.8,0.97)(0.8,-0.03)
\psline[linewidth=0.02cm](0.8,-0.03)(1.6,0.97)
\psdots[dotsize=0.18](0.8,-0.03)
\psdots[dotsize=0.18](0.8,0.97)
\end{pspicture}}
\end{array}
\circ_{2} 
\begin{array}{c}
\scalebox{.7} % Change this value to rescale the drawing.
{
\begin{pspicture}(0,-1.05)(1.62,1.05)
\psline[linewidth=0.02cm](0.8,-0.03)(0.8,-1.03)
\psline[linewidth=0.02cm](0.8,-0.03)(0.0,0.97)
\psline[linewidth=0.02cm](0.4,0.97)(0.8,-0.03)
\psline[linewidth=0.02cm](0.8,0.97)(0.8,-0.03)
\psline[linewidth=0.02cm](0.8,-0.03)(1.2,0.97)
\psline[linewidth=0.02cm](0.8,-0.03)(1.6,0.97)
\psdots[dotsize=0.18](0.8,-0.03)
\psdots[dotsize=0.18](0.4,0.97)
\psdots[dotsize=0.18](0.8,0.97)
\end{pspicture} 
}
\end{array}
\leadsto
\begin{array}{c}
% Generated with LaTeXDraw 2.0.8
% Tue Aug 28 18:38:50 AZOST 2012
% \usepackage[usenames,dvipsnames]{pstricks}
% \usepackage{epsfig}
% \usepackage{pst-grad} % For gradients
% \usepackage{pst-plot} % For axes
\scalebox{.7} % Change this value to rescale the drawing.
{
\begin{pspicture}(0,-1.555)(2.41,1.5591111)
\psline[linewidth=0.02cm](0.8,-0.545)(0.8,-1.545)
\psline[linewidth=0.02cm](0.8,-0.545)(0.0,0.455)
\psline[linewidth=0.02cm](0.8,0.455)(0.8,-0.545)
\psline[linewidth=0.02cm](0.8,-0.545)(1.6,0.455)
\psdots[dotsize=0.18](0.8,-0.545)
\psdots[dotsize=0.18](0.8,0.455)
\psline[linewidth=0.02cm](1.6,0.455)(0.8,1.455)
\psline[linewidth=0.02cm](1.2,1.455)(1.6,0.455)
\psline[linewidth=0.02cm](1.6,1.455)(1.6,0.455)
\psline[linewidth=0.02cm](1.6,0.455)(2.0,1.455)
\psline[linewidth=0.02cm](1.6,0.455)(2.4,1.455)
\psdots[dotsize=0.18](1.6,0.455)
\psdots[dotsize=0.18](1.2,1.455)
\psdots[dotsize=0.18](1.6,1.455)
\end{pspicture} 
}
\end{array}
\leadsto
\begin{array}{c}
% Generated with LaTeXDraw 2.0.8
% Tue Aug 28 18:42:44 AZOST 2012
% \usepackage[usenames,dvipsnames]{pstricks}
% \usepackage{epsfig}
% \usepackage{pst-grad} % For gradients
% \usepackage{pst-plot} % For axes
\scalebox{.7} % Change this value to rescale the drawing.
{
\begin{pspicture}(0,-1.055)(2.4,1.0591111)
\psline[linewidth=0.02cm](1.19,-0.045)(1.19,-1.045)
\psline[linewidth=0.02cm](1.19,-0.045)(0.0,0.9391111)
\psline[linewidth=0.02cm](0.4,0.9391111)(1.19,-0.045)
\psdots[dotsize=0.18](1.19,-0.045)
\psdots[dotsize=0.18](0.39,0.955)
\psline[linewidth=0.02cm](1.19,-0.045)(0.8,0.9391111)
\psline[linewidth=0.02cm](1.2,0.9391111)(1.2,-0.06088889)
\psline[linewidth=0.02cm](1.59,0.955)(1.2,-0.06088889)
\psline[linewidth=0.02cm](1.2,-0.06088889)(1.99,0.955)
\psline[linewidth=0.02cm](1.2,-0.06088889)(2.39,0.955)
\psdots[dotsize=0.18](1.19,0.955)
\psdots[dotsize=0.18](1.59,0.955)
\end{pspicture} 
}
\end{array},$$
$$\mu^{2}(\id{},u,\id{})\circ_{2}\mu^{4}(\id{},u,u,\id{},\id{})=
\mu^{6}(\id{},u,\id{},u,u,\id{},\id{}),$$
except when the grafted tree is $u$. In that case, the new inner edge is not contracted,
$$
\mu^{2}(\id{},u,\id{})\circ_{2}u\quad =
\begin{array}{c}
\scalebox{.7} % Change this value to rescale the drawing.
{
\begin{pspicture}(0,-1.05)(1.62,1.05)
\psline[linewidth=0.02cm](0.8,-0.03)(0.8,-1.03)
\psline[linewidth=0.02cm](0.8,-0.03)(0.0,0.97)
\psline[linewidth=0.02cm](0.8,0.97)(0.8,-0.03)
\psline[linewidth=0.02cm](0.8,-0.03)(1.6,0.97)
\psdots[dotsize=0.18](0.8,-0.03)
\psdots[dotsize=0.18](0.8,0.97)
\end{pspicture}}
\end{array}
\circ_{2} \begin{array}{c}
\scalebox{.7} % Change this value to rescale the drawing.
{
\begin{pspicture}(0,-0.63)(1.22,0.63)
\psline[linewidth=0.02cm](0.6,0)(0.6,-0.61)
\psdots[dotsize=0.18](0.6,-0.01)
\end{pspicture}
}\\[10pt]\end{array}=\begin{array}{c}
\scalebox{.7} % Change this value to rescale the drawing.
{
\begin{pspicture}(0,-1.05)(1.62,1.05)
\psline[linewidth=0.02cm](0.8,-0.03)(0.8,-1.03)
\psline[linewidth=0.02cm](0.8,-0.03)(0.0,0.97)
\psline[linewidth=0.02cm](0.8,0.97)(0.8,-0.03)
\psline[linewidth=0.02cm](0.8,-0.03)(1.6,0.97)
\psdots[dotsize=0.18](0.8,-0.03)
\psdots[dotsize=0.18](0.8,0.97)
\psdots[dotsize=0.18](1.6,0.97)
\end{pspicture}}
\end{array}=\quad\mu^{2}(\id{},u,u).$$
Compare Example \ref{sets}. 
\end{rem}

\begin{lem}\label{generam}
The operad $\mathtt{u}_{\infty}\mathtt{A}^{\operatorname{Grd}}$ is generated by the objects $\mu$ and $u$ and by the isomorphisms $\mu(u,\id{})\cong \id{}$ and $\mu(\id{},u)\cong \id{}$.
\end{lem}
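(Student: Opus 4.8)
The claim is that $\mathtt{u}_{\infty}\mathtt{A}^{\operatorname{Grd}}$ is generated — as an operad of groupoids — by the two objects $\mu,u$ together with the two isomorphisms $\mu(u,\id{})\cong\id{}$ and $\mu(\id{},u)\cong\id{}$. Since $\mathtt{u}_{\infty}\mathtt{A}^{\operatorname{Grd}}$ is levelwise contractible with operad of objects $\operatorname{Ob}(\mathtt{u}_{\infty}\mathtt{A}^{\operatorname{Grd}})=\mathtt{Ass}^{\operatorname{Set}}\coprod\mathcal F(\{u\}[0])$, the statement splits into two parts: first, that every object is reachable by iterated composition from $\mu$ and $u$ (and $\id{}$); second, that every morphism is generated, under composition laws and composition of morphisms, by the two displayed isomorphisms (and identities). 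I would make precise first what ``generated by'' means here: $\mathtt{u}_{\infty}\mathtt{A}^{\operatorname{Grd}}$ is the smallest sub-operad of itself (in the strict sense of a subobject in $\operad{\operatorname{Grd}}$, i.e.\ levelwise a subgroupoid closed under the identity and all composition laws $\circ_i$) containing the listed objects and morphisms.

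\smallskip

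\noindent\emph{Objects.} The first step is the object-level statement, which is essentially bookkeeping with the description in Remark~\ref{precisar}. Every object of $\operatorname{Ob}(\mathtt{u}_{\infty}\mathtt{A}^{\operatorname{Grd}})(n)$ is either $\id{}$ (arity $1$), or $u$ (arity $0$), or a corolla with $n$ leaves, at least two branches, and some number $k\geq 0$ of corks; such a corolla is precisely $\mu^{n+k-1}(\epsilon_1,\dots,\epsilon_{n+k})$ where each $\epsilon_j\in\{\id{},u\}$, exactly $k$ of them equal to $u$. Since $\mu^{m-1}$ is an iterated $\circ_i$-composite of copies of $\mu$, and inserting a cork in the $j$-th slot is the composition $\circ_j$ with $u$ (using the composition-law formulas spelled out in Remark~\ref{precisar}, where grafting $u$ onto a $\geq 2$-branch corolla does \emph{not} contract the new edge), every object of positive arity or of arity $0$/$1$ of the excluded shapes is obtained from $\mu,u,\id{}$ by iterated composition laws. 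So the sub-operad generated by $\{\mu,u\}$ already contains all objects.

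\smallskip

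\noindent\emph{Morphisms.} The second and main step: I must show that, once all objects are present, the closure of $\{\,\mu(u,\id{})\cong\id{},\ \mu(\id{},u)\cong\id{}\,\}$ under composition of morphisms and under all the operad composition laws $\circ_i$ (which on a levelwise contractible operad act on morphisms) yields every morphism of the groupoid $\mathtt{u}_{\infty}\mathtt{A}^{\operatorname{Grd}}(n)$. Because each $\mathtt{u}_{\infty}\mathtt{A}^{\operatorname{Grd}}(n)$ is a contractible groupoid, a set of morphisms generates the whole groupoid iff its closure under composition and inversion connects every object to every other object — i.e.\ iff the ``generated subgroupoid'' has the full object set as a single connected component (and contractibility then forces it to be everything). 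So it suffices to show: for any two objects $a,b$ of $\mathtt{u}_{\infty}\mathtt{A}^{\operatorname{Grd}}(n)$, there is an isomorphism $a\to b$ lying in the sub-operad generated by the two displayed isomorphisms. Using the normal form $a=\mu^{m-1}(\epsilon_1,\dots,\epsilon_m)$ from the object step, I would argue that applying $\circ_i$ with the isomorphism $\mu(u,\id{})\cong\id{}$ (resp.\ $\mu(\id{},u)\cong\id{}$) to a sub-corolla performs an ``absorption'' move: it deletes one cork and one binary vertex, turning a corolla with a cork adjacent to a $\mu$ into a shorter corolla, and these moves connect $a$ to the corolla $\mu^{n-1}(\id{},\dots,\id{})$ with no corks (for $n\geq 2$), or to $\id{}$ / $u$ in the low-arity cases. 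Chaining such generated isomorphisms (and their inverses) between $a$ and the normal form, and between $b$ and the normal form, produces the required $a\to b$.

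\smallskip

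\noindent\emph{Main obstacle.} The delicate point is bookkeeping the composition-law action on \emph{morphisms}: I must verify that $\circ_i$ applied to one of the two displayed isomorphisms (with identities in the other argument, and possibly composed with objects on the outside) genuinely lands the ``absorption'' isomorphism I want, with the correct source and target corollas, \emph{after} carrying out the grafting-and-contracting rule of Remark~\ref{precisar}; in particular I need to treat carefully the exceptional cases $\mu\circ_1 u$ and $\mu\circ_2 u$ (which contract) versus the generic $\mu^{n-1}\circ_i u$ with $n>2$ (which does not), and the low-arity corollas excluded from the object set. The cleanest way to organize this, which I would adopt, is to induct on the number $k$ of corks of $a$: the base $k=0$ is the no-cork corolla (or $\id{}$), and the inductive step removes one cork by a single generated isomorphism obtained as $\circ_i$ with a displayed iso, reducing to fewer corks; so the real content is checking that the inductive step is always available, i.e.\ that some cork in any $a$ with $k\geq 1$ is adjacent to a binary vertex to which $\mu(u,\id{})\cong\id{}$ or $\mu(\id{},u)\cong\id{}$ applies — which follows because in the normal form every cork sits in a slot of the single vertex $\mu^{m-1}$, whose outermost binary sub-vertices are exactly where these isomorphisms act.
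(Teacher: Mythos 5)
Your proposal is correct and follows essentially the same route as the paper: objects are handled by the normal-form bookkeeping of Remark \ref{precisar}, and for morphisms one uses contractibility of each $\mathtt{u}_{\infty}\mathtt{A}^{\operatorname{Grd}}(n)$ to reduce to connecting every object to a cork-free normal form, deleting one cork at a time via $\circ_i$ with the two displayed isomorphisms (with the arity-$0$ case, reduction to $u$, treated separately via the composite $\lambda\circ_1 u=\rho\circ_1 u$, exactly as you flag in your low-arity caveat).
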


\begin{proof}
The previous remark shows that any object in $\mathtt{u}_{\infty}\mathtt{A}^{\operatorname{Grd}}$ can be obtained from $\mu$ and $u$. We must show that the unique existing isomorphism between any two objects can be obtained from $\mu(u,\id{})\cong \id{}$ and $\mu(\id{},u)\cong \id{}$. It is enough to prove that we can get all morphisms with target $\mu^{n-1}$, the corolla with $n$ leaves and no corks, $n\geq2$, all morphisms with target $\id{}=|$, and all morphisms with target $u$. 

Starting with an  object of positive arity represented by a corolla as in the previous remark, the isomorphisms in the statement, that we can respectively denote
$$
\begin{array}{c}
\scalebox{.7} % Change this value to rescale the drawing.
{
\begin{pspicture}(0,-0.63)(1.22,0.7)
\psdots[dotsize=0.18](0.6,-0.01)
\psline[linewidth=0.02cm](0.6,0)(0.6,-0.61)
\psline[linewidth=0.02cm](0.6,0)(0.0,0.59)
\psline[linewidth=0.02cm](0.6,0)(1.2,0.59)
\psdots[dotsize=0.18](0,0.59)
\end{pspicture}
}\end{array}\st{\lambda}\To\!\!\!\!\begin{array}{c}
\scalebox{.7} % Change this value to rescale the drawing.
{
\begin{pspicture}(0.3,-0.63)(.8,0)
\psline[linewidth=0.02cm](0.6,0)(0.6,-0.61)
\end{pspicture}
}\end{array},
\qquad
\begin{array}{c}
\scalebox{.7} % Change this value to rescale the drawing.
{
\begin{pspicture}(0,-0.63)(1.22,0.7)
\psdots[dotsize=0.18](0.6,-0.01)
\psline[linewidth=0.02cm](0.6,0)(0.6,-0.61)
\psline[linewidth=0.02cm](0.6,0)(0.0,0.59)
\psline[linewidth=0.02cm](0.6,0)(1.2,0.59)
\psdots[dotsize=0.18](1.2,0.59)
\end{pspicture}
}\end{array}\st{\rho}\To\!\!\!\!\begin{array}{c}
\scalebox{.7} % Change this value to rescale the drawing.
{
\begin{pspicture}(0.3,-0.63)(.8,0)
\psline[linewidth=0.02cm](0.6,0)(0.6,-0.61)
\end{pspicture}
}\end{array},$$
allow to delete one cork at a time, ending up with a corolla with no corks or with $\id{}=|$, depending on the arity, e.g.~
$$\xymatrix{\begin{array}{c}
\scalebox{.7} % Change this value to rescale the drawing.
{
\begin{pspicture}(0,-1.05)(1.62,1.05)
\psline[linewidth=0.02cm](0.8,-0.03)(0.8,-1.03)
\psline[linewidth=0.02cm](0.8,-0.03)(0.0,0.97)
\psline[linewidth=0.02cm](0.4,0.97)(0.8,-0.03)
\psline[linewidth=0.02cm](0.8,0.97)(0.8,-0.03)
\psline[linewidth=0.02cm](0.8,-0.03)(1.2,0.97)
\psline[linewidth=0.02cm](0.8,-0.03)(1.6,0.97)
\psdots[dotsize=0.18](0.8,-0.03)
\psdots[dotsize=0.18](0.4,0.97)
\psdots[dotsize=0.18](0.8,0.97)
\end{pspicture} 
}
\end{array}
\ar[r]^{% Generated with LaTeXDraw 2.0.8
% Mon Sep 10 20:36:59 CEST 2012
% \usepackage[usenames,dvipsnames]{pstricks}
% \usepackage{epsfig}
% \usepackage{pst-grad} % For gradients
% \usepackage{pst-plot} % For axes
\scalebox{.5} % Change this value to rescale the drawing.
{
\begin{pspicture}(0,-1.4020555)(2.0,1.3920555)
\psline[linewidth=0.02cm](1.19,-0.39205554)(1.19,-1.3920555)
\psline[linewidth=0.02cm](1.19,-0.39205554)(0.39,0.6079444)
\psline[linewidth=0.02cm](1.19,0.6079444)(1.19,-0.39205554)
\psline[linewidth=0.02cm](1.19,-0.39205554)(1.59,0.6079444)
\psline[linewidth=0.02cm](1.19,-0.39205554)(1.99,0.6079444)
\psdots[dotsize=0.18](1.19,-0.39205554)
\psdots[dotsize=0.18](1.19,0.6079444)
\psellipse[linewidth=0.02,dimen=outer](0.4,0.89)(0.3,0.3)
\usefont{T1}{ptm}{m}{n}
\rput(0.4,.9){$\rho$}
\end{pspicture} 
}
}
&
\begin{array}{c}
\scalebox{.7} % Change this value to rescale the drawing.
{
\begin{pspicture}(0,-1.05)(1.62,1.05)
\psline[linewidth=0.02cm](0.8,-0.03)(0.8,-1.03)
\psline[linewidth=0.02cm](0.8,-0.03)(0.0,0.97)
%\psline[linewidth=0.02cm](0.4,0.97)(0.8,-0.03)
\psline[linewidth=0.02cm](0.8,0.97)(0.8,-0.03)
\psline[linewidth=0.02cm](0.8,-0.03)(1.2,0.97)
\psline[linewidth=0.02cm](0.8,-0.03)(1.6,0.97)
\psdots[dotsize=0.18](0.8,-0.03)
%\psdots[dotsize=0.18](0.4,0.97)
\psdots[dotsize=0.18](0.8,0.97)
\end{pspicture} 
}
\end{array}
\ar[r]^{
% Generated with LaTeXDraw 2.0.8
% Mon Sep 10 20:31:58 CEST 2012
% \usepackage[usenames,dvipsnames]{pstricks}
% \usepackage{epsfig}
% \usepackage{pst-grad} % For gradients
% \usepackage{pst-plot} % For axes
\scalebox{.5} % Change this value to rescale the drawing.
{
\begin{pspicture}(0,-1.415)(1.6,1.405)
\psline[linewidth=0.02cm](0.8,-0.405)(0.8,-1.405)
\psline[linewidth=0.02cm](0.8,-0.405)(0.0,0.595)
\psline[linewidth=0.02cm](0.8,-0.405)(1.2,0.595)
\psline[linewidth=0.02cm](0.8,-0.405)(1.6,0.595)
\psdots[dotsize=0.18](0.8,-0.405)
\psellipse[linewidth=0.02,dimen=outer](1.19,.89)(0.3,0.3)
\usefont{T1}{ptm}{m}{n}
\rput(1.2,.9){$\displaystyle\lambda$}
\end{pspicture} 
}
}
&
\begin{array}{c}
\scalebox{.7} % Change this value to rescale the drawing.
{
\begin{pspicture}(0,-1.05)(1.62,1.05)
\psline[linewidth=0.02cm](0.8,-0.03)(0.8,-1.03)
\psline[linewidth=0.02cm](0.8,-0.03)(0.0,0.97)
\psline[linewidth=0.02cm](0.8,-0.03)(1.2,0.97)
\psline[linewidth=0.02cm](0.8,-0.03)(1.6,0.97)
\psdots[dotsize=0.18](0.8,-0.03)
\end{pspicture} 
}
\end{array}.}$$

In arity $0$, we can use the isomorphism
$$
\lambda\circ_{1}u=\rho\circ_{1}u\colon \begin{array}{c}
\scalebox{.7} % Change this value to rescale the drawing.
{
\begin{pspicture}(0,-0.63)(1.22,0.7)
\psdots[dotsize=0.18](0.6,-0.01)
\psline[linewidth=0.02cm](0.6,0)(0.6,-0.61)
\psline[linewidth=0.02cm](0.6,0)(0.0,0.59)
\psline[linewidth=0.02cm](0.6,0)(1.2,0.59)
\psdots[dotsize=0.18](0,0.59)
\psdots[dotsize=0.18](1.2,0.59)
\end{pspicture}
}\end{array}\To\!\!\!\!\begin{array}{c}
\scalebox{.7} % Change this value to rescale the drawing.
{
\begin{pspicture}(0.3,-0.63)(.8,0)
\psline[linewidth=0.02cm](0.6,0)(0.6,-0.61)
\psdots[dotsize=0.18](0.6,-0.01)
\end{pspicture}
}\end{array},
%\qquad
%\begin{array}{c}
%\scalebox{.7} % Change this value to rescale the drawing.
%{
%\begin{pspicture}(0,-0.63)(1.22,0.7)
%\psdots[dotsize=0.18](0.6,-0.01)
%\psline[linewidth=0.02cm](0.6,0)(0.6,-0.61)
%\psline[linewidth=0.02cm](0.6,0)(0.0,0.59)
%\psline[linewidth=0.02cm](0.6,0)(1.2,0.59)
%\psdots[dotsize=0.18](1.2,0.59)
%\psdots[dotsize=0.18](0,0.59)
%\end{pspicture}
%}\end{array}\st{\rho\circ_{1}u}\To\!\!\!\!\begin{array}{c}
%\scalebox{.7} % Change this value to rescale the drawing.
%{
%\begin{pspicture}(0.3,-0.63)(.8,0)
%\psline[linewidth=0.02cm](0.6,0)(0.6,-0.61)
%\psdots[dotsize=0.18](0.6,-0.01)
%\end{pspicture}
%}\end{array},
$$
%even only one of them, 
to reduce the number of corks, ending up with $u$. Hence, we are done.
\end{proof}

We now define an operad that we will later show to be a $u$-infinity unital associative operad  of groupoids, see Lemma \ref{uuu} below.

\begin{defn}
The operad  of groupoids $\mathcal{U}$ is defined as the  levelwise contractible operad with objects
$$
\operatorname{Ob}(\mathcal{U})=
\mathtt{uAss}^{\operatorname{Set}}\amalg \mathcal{F}(\{u'\}[0]).$$
\end{defn}

\begin{rem}\label{precosar}
Following Remarks \ref{binary} and \ref{precisar}, we here describe the operad of sets $\operatorname{Ob}(\mathcal{U})$. We can consider the morphism in $\operad{\operatorname{Grd}}$
$$\psi\colon \mathtt{u}_{\infty}\mathtt{A}^{\operatorname{Grd}}\To \mathcal{U}$$
given on objects by 
$$\operatorname{Ob}(\psi)=\phi^{\operatorname{Set}}\amalg(\text{iso. }u\mapsto u')\colon\mathtt{Ass}^{\operatorname{Set}}\amalg \mathcal{F}(\{u\}[0])\To \mathtt{uAss}^{\operatorname{Set}}\amalg \mathcal{F}(\{u'\}[0]).$$
The morphism $\psi$ is an isomorphism in positive arities, that we use as an identification. In arity $0$, $\psi$ in an inclusion of objects (and hence morphisms). We also identify $\mathtt{u}_{\infty}\mathtt{A}^{\operatorname{Grd}}(0)$ with its image in $\mathcal{U}(0)$ through $\psi(0)$. The extra object of  $\mathcal{U}(0)$ is $u$, which is represented by a trivial corolla with a white cork, as in Example \ref{sets}. Therefore,  black cork means $u'$ and white cork means $u$, 
$$
\mu^{4}(\id{},u',u',\id{},\id{})\quad=\begin{array}{c}
\scalebox{.7} % Change this value to rescale the drawing.
{
\begin{pspicture}(0,-1.05)(1.62,1.05)
\psline[linewidth=0.02cm](0.8,-0.03)(0.8,-1.03)
\psline[linewidth=0.02cm](0.8,-0.03)(0.0,0.97)
\psline[linewidth=0.02cm](0.4,0.97)(0.8,-0.03)
\psline[linewidth=0.02cm](0.8,0.97)(0.8,-0.03)
\psline[linewidth=0.02cm](0.8,-0.03)(1.2,0.97)
\psline[linewidth=0.02cm](0.8,-0.03)(1.6,0.97)
\psdots[dotsize=0.18](0.8,-0.03)
\psdots[dotsize=0.18](0.4,0.97)
\psdots[dotsize=0.18](0.8,0.97)
\end{pspicture} 
}
\end{array},\qquad
u'\quad=\begin{array}{c}
\scalebox{.7} % Change this value to rescale the drawing.
{
\begin{pspicture}(0.6,-0.5)(.6,0)
\psline[linewidth=0.02cm](0.6,0)(0.6,-0.61)
\psdots[dotsize=0.18](0.6,-0.01)
\end{pspicture}
}\end{array},\qquad
u\quad=\begin{array}{c}
\scalebox{.7} % Change this value to rescale the drawing.
{
\begin{pspicture}(0.6,-0.5)(.6,0)
\psline[linewidth=0.02cm](0.6,0)(0.6,-0.61)
\psdots[dotsize=0.18,fillstyle=solid,dotstyle=o](0.6,-0.01)
\end{pspicture}
}\end{array}.
$$
The composition laws are defined in terms of trees as in Remark \ref{precisar} when $u$ is not involved. If $u$ appears, it is almost always given by grafting and then contracting the newly created inner edge, 
$$
\begin{array}{c}
\scalebox{.7} % Change this value to rescale the drawing.
{
\begin{pspicture}(0,-1.05)(1.62,1.05)
\psline[linewidth=0.02cm](0.8,-0.03)(0.8,-1.03)
\psline[linewidth=0.02cm](0.8,-0.03)(0.0,0.97)
\psline[linewidth=0.02cm](0.4,0.97)(0.8,-0.03)
\psline[linewidth=0.02cm](0.8,0.97)(0.8,-0.03)
\psline[linewidth=0.02cm](0.8,-0.03)(1.2,0.97)
\psline[linewidth=0.02cm](0.8,-0.03)(1.6,0.97)
\psdots[dotsize=0.18](0.8,-0.03)
\psdots[dotsize=0.18](0.4,0.97)
\psdots[dotsize=0.18](0.8,0.97)
\end{pspicture} 
}
\end{array}\circ_{2}\begin{array}{c}
\scalebox{.7} % Change this value to rescale the drawing.
{
\begin{pspicture}(0,-0.63)(1.22,0.63)
\psline[linewidth=0.02cm](0.6,0)(0.6,-0.61)
\psdots[dotsize=0.18,fillstyle=solid,dotstyle=o](0.6,-0.01)
\end{pspicture}
}\\[10pt]\end{array}\leadsto
\begin{array}{c}
\scalebox{.7} % Change this value to rescale the drawing.
{
\begin{pspicture}(0,-1.05)(1.62,1.05)
\psline[linewidth=0.02cm](0.8,-0.03)(0.8,-1.03)
\psline[linewidth=0.02cm](0.8,-0.03)(0.0,0.97)
\psline[linewidth=0.02cm](0.4,0.97)(0.8,-0.03)
\psline[linewidth=0.02cm](0.8,0.97)(0.8,-0.03)
\psline[linewidth=0.02cm](0.8,-0.03)(1.2,0.97)
\psline[linewidth=0.02cm](0.8,-0.03)(1.6,0.97)
\psdots[dotsize=0.18](0.8,-0.03)
\psdots[dotsize=0.18](0.4,0.97)
\psdots[dotsize=0.18](0.8,0.97)
\psdots[dotsize=0.18,fillstyle=solid,dotstyle=o](1.2,0.97)
\end{pspicture} 
}
\end{array}\leadsto
\begin{array}{c}
\scalebox{.7} % Change this value to rescale the drawing.
{
\begin{pspicture}(0,-1.05)(1.62,1.05)
\psline[linewidth=0.02cm](0.8,-0.03)(0.8,-1.03)
\psline[linewidth=0.02cm](0.8,-0.03)(0.0,0.97)
\psline[linewidth=0.02cm](0.4,0.97)(0.8,-0.03)
\psline[linewidth=0.02cm](0.8,0.97)(0.8,-0.03)
\psline[linewidth=0.02cm](0.8,-0.03)(1.6,0.97)
\psdots[dotsize=0.18](0.8,-0.03)
\psdots[dotsize=0.18](0.4,0.97)
\psdots[dotsize=0.18](0.8,0.97)
\end{pspicture} 
}
\end{array},
$$$$\mu^{4}(\id{},u',u',\id{},\id{})\circ_{2}u=\mu^{4}(\id{},u',u',u,\id{})
=\mu^{3}(\id{},u',u',\id{})
.
$$
There are four exceptions, the two exceptions in Example \ref{sets} and 
$$(\mu(\id{},u'))\circ_{1}u\;\leadsto\begin{array}{c}
\scalebox{.7} % Change this value to rescale the drawing.
{
\begin{pspicture}(0,-0.63)(1.22,0.7)
\psdots[dotsize=0.18](0.6,-0.01)
\psline[linewidth=0.02cm](0.6,0)(0.6,-0.61)
\psline[linewidth=0.02cm](0.6,0)(0.0,0.59)
\psline[linewidth=0.02cm](0.6,0)(1.2,0.59)
\psdots[dotsize=0.18,fillstyle=solid,dotstyle=o](0,0.59)
\psdots[dotsize=0.18](1.2,0.59)
\end{pspicture}
}\end{array}\leadsto\begin{array}{c}
\scalebox{.7} % Change this value to rescale the drawing.
{
\begin{pspicture}(0.6,-0.5)(.6,0)
\psline[linewidth=0.02cm](0.6,0)(0.6,-0.61)
\psdots[dotsize=0.18](0.6,-0.01)
\end{pspicture}
}\end{array}=\;u',\quad
(\mu(u',\id{}))\circ_{1}u\;\leadsto
\begin{array}{c}
\scalebox{.7} % Change this value to rescale the drawing.
{
\begin{pspicture}(0,-0.63)(1.22,0.7)
\psdots[dotsize=0.18](0.6,-0.01)
\psline[linewidth=0.02cm](0.6,0)(0.6,-0.61)
\psline[linewidth=0.02cm](0.6,0)(0.0,0.59)
\psline[linewidth=0.02cm](0.6,0)(1.2,0.59)
\psdots[dotsize=0.18,fillstyle=solid,dotstyle=o](1.2,0.59)
\psdots[dotsize=0.18](0,0.59)
\end{pspicture}
}\end{array}\leadsto\begin{array}{c}
\scalebox{.7} % Change this value to rescale the drawing.
{
\begin{pspicture}(0.6,-0.5)(.6,0)
\psline[linewidth=0.02cm](0.6,0)(0.6,-0.61)
\psdots[dotsize=0.18](0.6,-0.01)
\end{pspicture}
}\end{array}=\;u'.$$

\end{rem}

\begin{lem}\label{ooo}
The operad $\mathcal{U}$ fits into the following push-out diagram,
$$\xymatrix@C=40pt{\mathcal{F}_{\operatorname{Grd}}(e[0])\ar@{ >->}[r]^{\mathcal{F}_{\operatorname{Grd}}(j[0])}_{\sim}\ar[d]_{\zeta}\ar@{}[rd]|{\text{push}}& \mathcal{F}_{\operatorname{Grd}}(E[0])\ar[d]^{\zeta'}\\
\mathtt{uAss}^{\operatorname{Grd}}\ar@{ >->}[r]^-{\sim}_-{\varphi}&\mathcal{U}}$$
where $\varphi$ is given on objects by the inclusion of the first factor of the coproduct, $\zeta$ is defined by $\zeta(e)=u$, and $\zeta'$ is defined by $\zeta'(e)=u$ and $\zeta'(e')=u'$. 
\end{lem}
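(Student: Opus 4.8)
The plan is to build the push-out $\mathcal{P}$ of the span $\mathtt{uAss}^{\operatorname{Grd}}\xleftarrow{\zeta}\mathcal{F}_{\operatorname{Grd}}(e[0])\xrightarrow{\mathcal{F}_{\operatorname{Grd}}(j[0])}\mathcal{F}_{\operatorname{Grd}}(E[0])$ and then to identify $\mathcal{P}$, structure maps included, with $\mathcal{U}$, $\varphi$, $\zeta'$. First I would note that $\mathcal{F}_{\operatorname{Grd}}(j[0])$ is a trivial cofibration in $\operad{\operatorname{Grd}}$, since it is a member of the generating set $\mathcal{F}_{\operatorname{Grd}}(J_{\mathbb{N}})$ (Remarks~\ref{generatriz} and~\ref{ijgrd}). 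Hence the canonical leg $\mathtt{uAss}^{\operatorname{Grd}}\to\mathcal{P}$, being a cobase change of a trivial cofibration, is itself a trivial cofibration, in particular a levelwise equivalence of groupoids; since $\mathtt{uAss}^{\operatorname{Grd}}(n)=\unit=e$, each $\mathcal{P}(n)$ is a nonempty contractible groupoid. I would also record that the square in the statement commutes, as both $\varphi\zeta$ and $\zeta'\circ\mathcal{F}_{\operatorname{Grd}}(j[0])$ send the unique arity-$0$ generator of $\mathcal{F}_{\operatorname{Grd}}(e[0])$ to the unit object $u\in\mathtt{uAss}^{\operatorname{Grd}}(0)$, and that generator generates $\mathcal{F}_{\operatorname{Grd}}(e[0])$ as an operad.

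Next I would compute $\operatorname{Ob}(\mathcal{P})$. The functor $\operatorname{Ob}\colon\operad{\operatorname{Grd}}\to\operad{\operatorname{Set}}$ is a left adjoint, hence preserves push-outs (Remark~\ref{contractible}); and since $\operatorname{Ob}$ preserves coproducts and finite products, the tree description of the free operad gives a natural identification $\operatorname{Ob}(\mathcal{F}_{\operatorname{Grd}}(V))=\mathcal{F}(\operatorname{Ob}(V))$ on sequences. As $\operatorname{Ob}(\mathtt{uAss}^{\operatorname{Grd}})=\mathtt{uAss}^{\operatorname{Set}}$, this makes $\operatorname{Ob}(\mathcal{P})$ the push-out in $\operad{\operatorname{Set}}$ of $\mathtt{uAss}^{\operatorname{Set}}\leftarrow\mathcal{F}(\{*\}[0])\to\mathcal{F}(\{e,e'\}[0])$. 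Writing $\{e,e'\}[0]=\{e\}[0]\coprod\{e'\}[0]$ and using that $\mathcal{F}$ preserves coproducts, the top leg $\mathcal{F}(\operatorname{Ob}(j)[0])$ becomes the inclusion of the first coproduct summand $\mathcal{F}(\{e\}[0])\hookrightarrow\mathcal{F}(\{e\}[0])\coprod\mathcal{F}(\{e'\}[0])$; so the push-out is $\mathtt{uAss}^{\operatorname{Set}}\coprod\mathcal{F}(\{e'\}[0])$, which after renaming $e'=u'$ is precisely $\operatorname{Ob}(\mathcal{U})$. Tracing the legs through this identification, $\operatorname{Ob}$ of the leg $\mathtt{uAss}^{\operatorname{Grd}}\to\mathcal{P}$ is the inclusion of the first factor, and $\operatorname{Ob}$ of the leg $\mathcal{F}_{\operatorname{Grd}}(E[0])\to\mathcal{P}$ sends $e\mapsto u$ and $e'\mapsto u'$, which matches $\operatorname{Ob}(\varphi)$ and $\operatorname{Ob}(\zeta')$ as prescribed in the statement.

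To conclude, let $\kappa\colon\mathcal{P}\to\mathcal{U}$ be the morphism induced out of the push-out by $\varphi$ and $\zeta'$ (using the commuting square of the first paragraph). By the previous paragraph $\operatorname{Ob}(\kappa)$ is identified with the identity, so each $\kappa(n)\colon\mathcal{P}(n)\to\mathcal{U}(n)$ is a bijection on objects; since both $\mathcal{P}(n)$ and $\mathcal{U}(n)$ are contractible, all of their hom-sets are singletons, hence $\kappa(n)$ is automatically fully faithful and thus an isomorphism of groupoids. Therefore $\kappa$ is an isomorphism of operads compatible with the structure maps, which is exactly the assertion that the displayed square is a push-out; moreover $\varphi$, being isomorphic to the trivial cofibration $\mathtt{uAss}^{\operatorname{Grd}}\to\mathcal{P}$ from the first paragraph, is a weak equivalence (indeed a trivial cofibration).

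The only delicate point, and the place I would pay most attention, is the bookkeeping of the structure maps: checking that the defining data of $\mathcal{U}$ is compatible with the push-out data on objects, so that $\kappa$ is the identity on $\operatorname{Ob}$, and the principle that a levelwise contractible operad of groupoids, together with its morphisms in and out, is pinned down by its operad of objects. Both come down to the adjunction $\operatorname{Ob}\dashv(\text{contractible})$ on operads from Remark~\ref{contractible} together with the elementary fact that a functor out of or into a contractible groupoid exists for, and is determined by, an arbitrary assignment on objects.
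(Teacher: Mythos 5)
Your proof is correct and follows essentially the same strategy as the paper's: form the actual push-out $\mathcal P$, use that $\operatorname{Ob}$ preserves push-outs to identify $\operatorname{Ob}(\mathcal P)$ with $\operatorname{Ob}(\mathcal U)$, observe that $\mathcal P$ is levelwise contractible because the cobase change of the generating trivial cofibration $\mathcal F_{\operatorname{Grd}}(j[0])$ is a trivial cofibration out of the levelwise-terminal $\mathtt{uAss}^{\operatorname{Grd}}$, and conclude that the comparison map is an object-bijective functor between contractible groupoids in each arity, hence an isomorphism. No gaps.
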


\begin{proof}
Denote by
$$\xymatrix@C=40pt{\mathcal{F}_{\operatorname{Grd}}(e[0])\ar@{ >->}[r]^{\mathcal{F}_{\operatorname{Grd}}(j[0])}_{\sim}\ar[d]_{\zeta}\ar@{}[rd]|{\text{push}}& \mathcal{F}_{\operatorname{Grd}}(E[0])\ar[d]^{\bar\zeta}\\
\mathtt{uAss}^{\operatorname{Grd}}\ar@{ >->}[r]^-{\sim}_-{\bar\jmath}&\mathcal P}$$
the push-out in $\operad{\operatorname{Grd}}$. The square in the statement is clearly commutative, so it induces a unique compatible morphism $\chi\colon \mathcal P\r \mathcal{U}$. We are going to show that $\chi$ is an isomorphism.

The square in the statement, on objects, is a push-out in $\operad{\operatorname{Set}}$, 
$$\xymatrix{\mathcal{F}(\{e\}[0])\ar[r]^-{\text{incl.}}\ar[d]_{\operatorname{Ob}(\zeta)}\ar@{}[rrd]|{\text{push}}& \mathcal{F}(\{e,e'\}[0])\ar@{=}[r]&\mathcal{F}(\{e\}[0])\amalg \mathcal{F}(\{e'\}[0])\ar[d]^{\operatorname{Ob}(\zeta')=\operatorname{Ob}(\zeta)\amalg (\text{iso. }e'\mapsto u')}\\
\mathtt{uAss}^{\operatorname{Set}}\ar[rr]_-{\text{incl.}}&&\mathtt{uAss}^{\operatorname{Set}}\amalg \mathcal{F}(\{u'\}[0])}$$
Hence, $\chi$ is bijective on objects. Therefore, in order to show that $\chi$ is an isomorphism it is enough to prove that $\mathcal P$ is levelwise contractible. This is obvious. Indeed $\mathcal{F}_{\operatorname{Grd}}(j[0])$ is a generating trivial cofibration,  so $\bar\jmath$ is a trivial cofibration, in particular a weak equivalence, i.e.~$\bar\jmath(n)\colon e=\mathtt{uAss}^{\operatorname{Grd}}(n)\r\mathcal P(n)$ is an equivalence of categories for all $n\geq 0$.
\end{proof}

\begin{lem}\label{uuu}
% Let the \emph{$u$-infinity unital associative operad}  of groupoids $\mathcal{U}$ be  the  levelwise contractible operad with objects
% $$
% \operatorname{Ob}(\mathcal{U})=
% \mathtt{uAss}^{\operatorname{Set}}\amalg \mathcal{F}(\{u'\}[0]).$$
Consider the commutative square
$$\xymatrix{\mathtt{Ass}^{\operatorname{Grd}}\ar@{ >->}[r]^{\bar\phi^{\operatorname{Grd}}_{\infty}}\ar[d]_{\phi^{\operatorname{Grd}}}& \mathtt{u}_{\infty}\mathtt{A}^{\operatorname{Grd}}\ar[d]^{\psi}_{\sim}\\
\mathtt{uAss}^{\operatorname{Grd}}\ar@{ >->}[r]^-{\sim}_-{\varphi}&\mathcal{U}}$$
where $\varphi$ and $\psi$ were defined in Lemma \ref{ooo} and Remark \ref{precosar}, respectively. The morphisms $\varphi$ and $\psi$ are weak equivalences since their sources and their target are levelwise contractible by definition. We assert that the previous commutative square is a push-out in $\operad{\operatorname{Grd}}$.
\end{lem}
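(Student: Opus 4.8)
The plan is to use that the ``set of objects'' functor $\operatorname{Ob}\colon\operad{\operatorname{Grd}}\to\operad{\operatorname{Set}}$ is a left adjoint (Remark \ref{contractible}), hence preserves push-outs. Write $\mathcal P$ for the genuine push-out $\mathtt{uAss}^{\operatorname{Grd}}\coprod_{\mathtt{Ass}^{\operatorname{Grd}}}\mathtt{u}_{\infty}\mathtt{A}^{\operatorname{Grd}}$ of $\phi^{\operatorname{Grd}}$ along $\bar\phi^{\operatorname{Grd}}_{\infty}$, and $\chi\colon\mathcal P\to\mathcal U$ for the comparison morphism induced by $\psi$ and $\varphi$. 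I would reduce the assertion to two claims: (i) the square in the statement becomes a push-out in $\operad{\operatorname{Set}}$ after applying $\operatorname{Ob}$; and (ii) $\mathcal P$ is levelwise contractible. Granting these, $\operatorname{Ob}(\chi)$ is an isomorphism by (i) and the fact that $\operatorname{Ob}$ preserves push-outs, so $\chi$ is bijective on objects in every arity; since $\mathcal U$ is levelwise contractible by definition and $\mathcal P$ is so by (ii), and a functor between contractible groupoids which is bijective on objects is automatically fully faithful, $\chi$ is an isomorphism, as desired.

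For (i) I would simply identify the three outer maps on objects. By Lemma \ref{esuiu}, $\operatorname{Ob}(\bar\phi^{\operatorname{Grd}}_{\infty})$ is the inclusion of the first factor $\mathtt{Ass}^{\operatorname{Set}}\hookrightarrow\mathtt{Ass}^{\operatorname{Set}}\coprod\mathcal F(\{u\}[0])$; by Lemma \ref{ooo}, $\operatorname{Ob}(\varphi)$ is the inclusion of the first factor $\mathtt{uAss}^{\operatorname{Set}}\hookrightarrow\mathtt{uAss}^{\operatorname{Set}}\coprod\mathcal F(\{u'\}[0])$; and by Remark \ref{precosar}, $\operatorname{Ob}(\psi)=\phi^{\operatorname{Set}}\coprod(\text{iso. }u\mapsto u')$ while $\operatorname{Ob}(\phi^{\operatorname{Grd}})=\phi^{\operatorname{Set}}$. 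For any morphism $f\colon A\to B$ in a cocomplete category, the push-out of $B\xleftarrow{f}A\to A\coprod C$ along the first coprojection is canonically $B\coprod C$; applied with $A=\mathtt{Ass}^{\operatorname{Set}}$, $B=\mathtt{uAss}^{\operatorname{Set}}$, $C=\mathcal F(\{u\}[0])$ and $f=\phi^{\operatorname{Set}}$, this identifies the push-out of the square on objects with $\mathtt{uAss}^{\operatorname{Set}}\coprod\mathcal F(\{u\}[0])$, and its canonical comparison to $\operatorname{Ob}(\mathcal U)=\mathtt{uAss}^{\operatorname{Set}}\coprod\mathcal F(\{u'\}[0])$ is the isomorphism renaming $u\leftrightarrow u'$.

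Claim (ii) is the real content, and I would establish it by checking the universal property of the push-out for $\mathcal U$ directly. Given $a\colon\mathtt{u}_{\infty}\mathtt{A}^{\operatorname{Grd}}\to\mathcal Q$ and $b\colon\mathtt{uAss}^{\operatorname{Grd}}\to\mathcal Q$ with $a\bar\phi^{\operatorname{Grd}}_{\infty}=b\phi^{\operatorname{Grd}}$, note that $\mu$ already lies in $\mathtt{Ass}^{\operatorname{Grd}}$, so $a(\mu)=b(\mu)$ is an associative operation in $\mathcal Q$ admitting the strict two-sided unit $b(u)$ (the unit equations hold in $\mathtt{uAss}^{\operatorname{Grd}}$, see Example \ref{sets}), whereas $a(u)$ is only a homotopy unit, through the isomorphisms $a(\lambda)$ and $a(\rho)$ of Lemma \ref{generam}. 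The one-line uniqueness-of-units computation $b(u)\cong a(\mu)\bigl(a(u),b(u)\bigr)=a(u)$ --- the isomorphism applying $a(\lambda)$, the equality using that $b(u)$ is strict --- then yields a canonical isomorphism $b(u)\cong a(u)$ in $\mathcal Q(0)$. Feeding this, together with $b$, into the presentation of $\mathcal U$ as the push-out of $\mathtt{uAss}^{\operatorname{Grd}}\xleftarrow{\zeta}\mathcal F_{\operatorname{Grd}}(e[0])\to\mathcal F_{\operatorname{Grd}}(E[0])$ supplied by Lemma \ref{ooo} builds a morphism $c\colon\mathcal U\to\mathcal Q$; the equations $c\varphi=b$ and $c\psi=a$ are then verified on the generators $\mu,u,\lambda,\rho$ of $\mathtt{u}_{\infty}\mathtt{A}^{\operatorname{Grd}}$ from Lemma \ref{generam}, and these same generators force $c$ to be unique. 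The main obstacle --- and the place where the explicit composition laws of Remarks \ref{precisar} and \ref{precosar} and of Example \ref{sets} enter --- is this last verification: showing that the uniqueness-of-units isomorphism is compatible with all composition laws, so that $c\psi$ agrees with $a$ not merely on the objects $\mu$ and $u$ but also on the two generating isomorphisms $\lambda$ and $\rho$. Alternatively one can prove (ii) head-on: in $\mathcal P$ the operation $\mu$ acquires the strict unit coming from $\mathtt{uAss}^{\operatorname{Grd}}$, which by the same computation forces every cork to be isomorphic to it, so each $\mathcal P(n)$ is connected; the delicate point is then to show that no new automorphisms appear, i.e.\ that each $\mathcal P(n)$ is simply connected.
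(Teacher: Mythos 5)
Your proposal follows essentially the same route as the paper. The paper's proof is exactly your ``direct universal property'' argument: writing $f=b$ and $g=a$, it observes that in $\mathcal U$ the unique isomorphism from the black cork $u'$ to the white cork $u$ factors as $\rho\circ_1 u$, uses the resulting uniqueness-of-units isomorphism $g(\rho)\circ_1 f(u)$ between $g(u)$ and $f(u)$ together with the push-out presentation of Lemma \ref{ooo} to produce the unique candidate $h$ with $h\varphi=f$, and then checks $h\psi=g$ on the generators $\mu$, $u$, $\lambda$, $\rho$ of Lemma \ref{generam}. Your isomorphism $b(u)\cong a(\mu)\bigl(a(u),b(u)\bigr)=a(u)$ is the mirror image of the paper's (using $\lambda$ and the right strict-unit relation instead of $\rho$ and the left one), and your observation that these same generators force uniqueness is also the paper's.

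Two remarks. First, the reduction to claims (i) and (ii) is redundant: once you verify the universal property of $\mathcal U$ directly you already have the lemma, so (i) and (ii) only matter for your alternative ``head-on'' route, whose delicate point (simple connectivity of each $\mathcal P(n)$) you rightly do not attempt. Second --- and this is the substantive issue --- the step you defer as ``the main obstacle'', namely checking that $h\psi$ agrees with $g$ on the generating isomorphisms $\lambda$ and $\rho$ (and on the cork $u$ of $\mathtt{u}_{\infty}\mathtt{A}^{\operatorname{Grd}}$), is not a formality: it is essentially the entire content of the paper's proof, a page of explicit manipulations with the interchange law and the strict unit relations in $\mathcal O$. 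The verification does go through, so your blueprint is sound and correctly locates where the work lies, but as written the proposal stops exactly where that work begins.
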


\begin{proof}
In order to warm up, the reader can easily check  that the square  in the statement is a push-out on objects. We tackle directly the statement.  We are going to prove that the square  satisfies the universal property of a push-out. With this purpose, we consider a commutative diagram of solid arrows in $\operad{\operatorname{Grd}}$,
$$\xymatrix{\mathtt{Ass}^{\operatorname{Grd}}\ar@{ >->}[r]^{\bar\phi^{\operatorname{Grd}}_{\infty}}\ar[d]_{\phi^{\operatorname{Grd}}}& \mathtt{u}_{\infty}\mathtt{A}^{\operatorname{Grd}}\ar[d]^{\psi}_{\sim}\ar@/^10pt/[rdd]^{g}\\
\mathtt{uAss}^{\operatorname{Grd}}\ar@{ >->}[r]^-{\sim}_-{\varphi}
\ar@/_10pt/[rrd]_{f}
&\mathcal{U}\ar@{-->}[rd]^{h}\\
&&\mathcal O}$$
where $f\phi^{\operatorname{Grd}}=g\bar\phi^{\operatorname{Grd}}_{\infty}$. We will show that there exists a unique morphism $h$ completing the diagram in a commutative way, i.e.~with two new commutative triangles, $f=h\varphi$ and $g=h\psi$.

The following equation holds in $\mathcal{U}$, 
$$\!\!\!\!\begin{array}{c}
\scalebox{.7} % Change this value to rescale the drawing.
{
\begin{pspicture}(0.3,-0.63)(.8,.2)
\psline[linewidth=0.02cm](0.6,0)(0.6,-0.61)
\psdots[dotsize=0.18](0.6,-0.01)
\end{pspicture}
}\end{array}\!\!\!\!\To\!\!\!\!\begin{array}{c}
\scalebox{.7} % Change this value to rescale the drawing.
{
\begin{pspicture}(0.3,-0.63)(.8,.2)
\psline[linewidth=0.02cm](0.6,0)(0.6,-0.61)
\psdots[dotsize=0.18,fillstyle=solid,dotstyle=o](0.6,-0.01)
\end{pspicture}
}\end{array}\!\!\!\!=
\left(\begin{array}{c}
\scalebox{.7} % Change this value to rescale the drawing.
{
\begin{pspicture}(0.1,-0.63)(1.22,0.7)
\psdots[dotsize=0.18](0.6,-0.01)
\psline[linewidth=0.02cm](0.6,0)(0.6,-0.61)
\psline[linewidth=0.02cm](0.6,0)(0.0,0.59)
\psline[linewidth=0.02cm](0.6,0)(1.2,0.59)

\psdots[dotsize=0.18](1.2,0.59)
\end{pspicture}
}\end{array}\To\!\!\!\!\begin{array}{c}
\scalebox{.7} % Change this value to rescale the drawing.
{
\begin{pspicture}(0.3,-0.63)(.8,.1)
\psline[linewidth=0.02cm](0.6,0)(0.6,-0.61)
\end{pspicture}
}\end{array}\!\!\!\!\right)\;\circ_{1}\!\!\!\!\begin{array}{c}
\scalebox{.7} % Change this value to rescale the drawing.
{
\begin{pspicture}(0.4,-0.63)(.6,.2)
\psline[linewidth=0.02cm](0.6,0)(0.6,-0.61)
\psdots[dotsize=0.18,fillstyle=solid,dotstyle=o](0.6,-0.01)
\end{pspicture}
}\end{array}.
$$
Therefore, if $h$ existed, it should satisfy
$$h\left(\!\!\!\!\!\begin{array}{c}
\scalebox{.7} % Change this value to rescale the drawing.
{
\begin{pspicture}(0.3,-0.63)(.8,.2)
\psline[linewidth=0.02cm](0.6,0)(0.6,-0.61)
\psdots[dotsize=0.18](0.6,-0.01)
\end{pspicture}
}\end{array}\!\!\!\!\To\!\!\!\!\begin{array}{c}
\scalebox{.7} % Change this value to rescale the drawing.
{
\begin{pspicture}(0.3,-0.63)(.8,.2)
\psline[linewidth=0.02cm](0.6,0)(0.6,-0.61)
\psdots[dotsize=0.18,fillstyle=solid,dotstyle=o](0.6,-0.01)
\end{pspicture}
}\end{array}\!\!\!\!\right)=
g\left(\begin{array}{c}
\scalebox{.7} % Change this value to rescale the drawing.
{
\begin{pspicture}(0.1,-0.63)(1.22,0.7)
\psdots[dotsize=0.18](0.6,-0.01)
\psline[linewidth=0.02cm](0.6,0)(0.6,-0.61)
\psline[linewidth=0.02cm](0.6,0)(0.0,0.59)
\psline[linewidth=0.02cm](0.6,0)(1.2,0.59)

\psdots[dotsize=0.18](1.2,0.59)
\end{pspicture}
}\end{array}\To\!\!\!\!\begin{array}{c}
\scalebox{.7} % Change this value to rescale the drawing.
{
\begin{pspicture}(0.3,-0.63)(.8,.1)
\psline[linewidth=0.02cm](0.6,0)(0.6,-0.61)
\end{pspicture}
}\end{array}\!\!\!\!\right)\;\circ_{1}f\left(\!\!\!\!\begin{array}{c}
\scalebox{.7} % Change this value to rescale the drawing.
{
\begin{pspicture}(0.4,-0.63)(.6,.2)
\psline[linewidth=0.02cm](0.6,0)(0.6,-0.61)
\psdots[dotsize=0.18,fillstyle=solid,dotstyle=o](0.6,-0.01)
\end{pspicture}
}\end{array}\!\right).
$$
By Lemma \ref{ooo}, there exists a unique  $h$ satisfying this equation and $f=h\varphi$ (notice that the equation implies $h(u)=f(u)$). Hence, it is only left to prove that $g=h\psi$. It is enough to show that this equation holds for the generators in Lemma \ref{generam}. This is obvious for $\mu$, since it comes from $\mathtt{Ass}^{\operatorname{Grd}}$.  For $u\in \mathtt{u}_{\infty}\mathtt{A}^{\operatorname{Grd}}(0)$, which is the black cork,
\begin{align*}
h\left(\!\!\!\!\!\begin{array}{c}
\scalebox{.7} % Change this value to rescale the drawing.
{
\begin{pspicture}(0.3,-0.63)(.8,.2)
\psline[linewidth=0.02cm](0.6,0)(0.6,-0.61)
\psdots[dotsize=0.18](0.6,-0.01)
\end{pspicture}
}\end{array}\!\!\!\!\right)&=
g\left(\begin{array}{c}
\scalebox{.7} % Change this value to rescale the drawing.
{
\begin{pspicture}(0.1,-0.63)(1.22,0.7)
\psdots[dotsize=0.18](0.6,-0.01)
\psline[linewidth=0.02cm](0.6,0)(0.6,-0.61)
\psline[linewidth=0.02cm](0.6,0)(0.0,0.59)
\psline[linewidth=0.02cm](0.6,0)(1.2,0.59)
\psdots[dotsize=0.18](1.2,0.59)
\end{pspicture}
}\end{array}\right)\;\circ_{1}f\left(\!\!\!\!\begin{array}{c}
\scalebox{.7} % Change this value to rescale the drawing.
{
\begin{pspicture}(0.4,-0.63)(.6,.2)
\psline[linewidth=0.02cm](0.6,0)(0.6,-0.61)
\psdots[dotsize=0.18,fillstyle=solid,dotstyle=o](0.6,-0.01)
\end{pspicture}
}\end{array}\!\right)=
\left(g\left(\begin{array}{c}
\scalebox{.7} % Change this value to rescale the drawing.
{
\begin{pspicture}(0.1,-0.63)(1.22,0.7)
\psdots[dotsize=0.18](0.6,-0.01)
\psline[linewidth=0.02cm](0.6,0)(0.6,-0.61)
\psline[linewidth=0.02cm](0.6,0)(0.0,0.59)
\psline[linewidth=0.02cm](0.6,0)(1.2,0.59)
\end{pspicture}
}\end{array}\right)\;\circ_{2}g\left(\!\!\!\!\begin{array}{c}
\scalebox{.7} % Change this value to rescale the drawing.
{
\begin{pspicture}(0.4,-0.63)(.6,.2)
\psline[linewidth=0.02cm](0.6,0)(0.6,-0.61)
\psdots[dotsize=0.18](0.6,-0.01)
\end{pspicture}
}\end{array}\!\right)\right)
\;\circ_{1}f\left(\!\!\!\!\begin{array}{c}
\scalebox{.7} % Change this value to rescale the drawing.
{
\begin{pspicture}(0.4,-0.63)(.6,.2)
\psline[linewidth=0.02cm](0.6,0)(0.6,-0.61)
\psdots[dotsize=0.18,fillstyle=solid,dotstyle=o](0.6,-0.01)
\end{pspicture}
}\end{array}\!\right)\\
&=
\left(f\left(\begin{array}{c}
\scalebox{.7} % Change this value to rescale the drawing.
{
\begin{pspicture}(0.1,-0.63)(1.22,0.7)
\psdots[dotsize=0.18](0.6,-0.01)
\psline[linewidth=0.02cm](0.6,0)(0.6,-0.61)
\psline[linewidth=0.02cm](0.6,0)(0.0,0.59)
\psline[linewidth=0.02cm](0.6,0)(1.2,0.59)
\end{pspicture}
}\end{array}\right)\;\circ_{1}f\left(\!\!\!\!\begin{array}{c}
\scalebox{.7} % Change this value to rescale the drawing.
{
\begin{pspicture}(0.4,-0.63)(.6,.2)
\psline[linewidth=0.02cm](0.6,0)(0.6,-0.61)
\psdots[dotsize=0.18,fillstyle=solid,dotstyle=o](0.6,-0.01)
\end{pspicture}
}\end{array}\!\right)\right)
\;\circ_{1}g\left(\!\!\!\!\begin{array}{c}
\scalebox{.7} % Change this value to rescale the drawing.
{
\begin{pspicture}(0.4,-0.63)(.6,.2)
\psline[linewidth=0.02cm](0.6,0)(0.6,-0.61)
\psdots[dotsize=0.18](0.6,-0.01)
\end{pspicture}
}\end{array}\!\right)
=
f\left(\!\!\!\!\begin{array}{c}
\scalebox{.7} % Change this value to rescale the drawing.
{
\begin{pspicture}(0.4,-0.63)(.6,.2)
\psline[linewidth=0.02cm](0.6,0)(0.6,-0.61)
\end{pspicture}
}\end{array}\!\right)
\;\circ_{1}g\left(\!\!\!\!\begin{array}{c}
\scalebox{.7} % Change this value to rescale the drawing.
{
\begin{pspicture}(0.4,-0.63)(.6,.2)
\psline[linewidth=0.02cm](0.6,0)(0.6,-0.61)
\psdots[dotsize=0.18](0.6,-0.01)
\end{pspicture}
}\end{array}\!\right)\\
&=
\id{\mathcal O}\circ_{1}g\left(\!\!\!\!\begin{array}{c}
\scalebox{.7} % Change this value to rescale the drawing.
{
\begin{pspicture}(0.4,-0.63)(.6,.2)
\psline[linewidth=0.02cm](0.6,0)(0.6,-0.61)
\psdots[dotsize=0.18](0.6,-0.01)
\end{pspicture}
}\end{array}\!\right)
=
g\left(\!\!\!\!\begin{array}{c}
\scalebox{.7} % Change this value to rescale the drawing.
{
\begin{pspicture}(0.4,-0.63)(.6,.2)
\psline[linewidth=0.02cm](0.6,0)(0.6,-0.61)
\psdots[dotsize=0.18](0.6,-0.01)
\end{pspicture}
}\end{array}\!\right).
\end{align*}

The generating isomorphisms satisfy the following equations in $\mathcal{U}$, 
\begin{align*}
\begin{array}{c}
\scalebox{.7} % Change this value to rescale the drawing.
{
\begin{pspicture}(0,-0.63)(1.22,0.7)
\psdots[dotsize=0.18](0.6,-0.01)
\psline[linewidth=0.02cm](0.6,0)(0.6,-0.61)
\psline[linewidth=0.02cm](0.6,0)(0.0,0.59)
\psline[linewidth=0.02cm](0.6,0)(1.2,0.59)
\psdots[dotsize=0.18](0,0.59)
\end{pspicture}
}\end{array}\st{\lambda}\To\!\!\!\!\begin{array}{c}
\scalebox{.7} % Change this value to rescale the drawing.
{
\begin{pspicture}(0.3,-0.63)(.8,0)
\psline[linewidth=0.02cm](0.6,0)(0.6,-0.61)
\end{pspicture}
}\end{array}&=
\begin{array}{c}
\scalebox{.7} % Change this value to rescale the drawing.
{
\begin{pspicture}(0,-0.63)(1.22,0.7)
\psdots[dotsize=0.18](0.6,-0.01)
\psline[linewidth=0.02cm](0.6,0)(0.6,-0.61)
\psline[linewidth=0.02cm](0.6,0)(0.0,0.59)
\psline[linewidth=0.02cm](0.6,0)(1.2,0.59)
\end{pspicture}
}\end{array}\circ_{1}
\left(\!\!\!\!\!\begin{array}{c}
\scalebox{.7} % Change this value to rescale the drawing.
{
\begin{pspicture}(0.3,-0.63)(.8,.2)
\psline[linewidth=0.02cm](0.6,0)(0.6,-0.61)
\psdots[dotsize=0.18](0.6,-0.01)
\end{pspicture}
}\end{array}\!\!\!\!\To\!\!\!\!\begin{array}{c}
\scalebox{.7} % Change this value to rescale the drawing.
{
\begin{pspicture}(0.3,-0.63)(.8,.2)
\psline[linewidth=0.02cm](0.6,0)(0.6,-0.61)
\psdots[dotsize=0.18,fillstyle=solid,dotstyle=o](0.6,-0.01)
\end{pspicture}
}\end{array}\!\!\!\!\right),\\
\begin{array}{c}
\scalebox{.7} % Change this value to rescale the drawing.
{
\begin{pspicture}(0,-0.63)(1.22,0.7)
\psdots[dotsize=0.18](0.6,-0.01)
\psline[linewidth=0.02cm](0.6,0)(0.6,-0.61)
\psline[linewidth=0.02cm](0.6,0)(0.0,0.59)
\psline[linewidth=0.02cm](0.6,0)(1.2,0.59)
\psdots[dotsize=0.18](1.2,0.59)
\end{pspicture}
}\end{array}\st{\rho}\To\!\!\!\!\begin{array}{c}
\scalebox{.7} % Change this value to rescale the drawing.
{
\begin{pspicture}(0.3,-0.63)(.8,0)
\psline[linewidth=0.02cm](0.6,0)(0.6,-0.61)
\end{pspicture}
}\end{array}&=
\begin{array}{c}
\scalebox{.7} % Change this value to rescale the drawing.
{
\begin{pspicture}(0,-0.63)(1.22,0.7)
\psdots[dotsize=0.18](0.6,-0.01)
\psline[linewidth=0.02cm](0.6,0)(0.6,-0.61)
\psline[linewidth=0.02cm](0.6,0)(0.0,0.59)
\psline[linewidth=0.02cm](0.6,0)(1.2,0.59)
\end{pspicture}
}\end{array}\circ_{2}
\left(\!\!\!\!\!\begin{array}{c}
\scalebox{.7} % Change this value to rescale the drawing.
{
\begin{pspicture}(0.3,-0.63)(.8,.2)
\psline[linewidth=0.02cm](0.6,0)(0.6,-0.61)
\psdots[dotsize=0.18](0.6,-0.01)
\end{pspicture}
}\end{array}\!\!\!\!\To\!\!\!\!\begin{array}{c}
\scalebox{.7} % Change this value to rescale the drawing.
{
\begin{pspicture}(0.3,-0.63)(.8,.2)
\psline[linewidth=0.02cm](0.6,0)(0.6,-0.61)
\psdots[dotsize=0.18,fillstyle=solid,dotstyle=o](0.6,-0.01)
\end{pspicture}
}\end{array}\!\!\!\!\right).
\end{align*}
Hence,
\begin{align*}
h(\lambda)&=
h\left(\begin{array}{c}
\scalebox{.7} % Change this value to rescale the drawing.
{
\begin{pspicture}(0,-0.63)(1.22,0.7)
\psdots[dotsize=0.18](0.6,-0.01)
\psline[linewidth=0.02cm](0.6,0)(0.6,-0.61)
\psline[linewidth=0.02cm](0.6,0)(0.0,0.59)
\psline[linewidth=0.02cm](0.6,0)(1.2,0.59)
\end{pspicture}
}\end{array}\right)\circ_{1}
h\left(\!\!\!\!\!\begin{array}{c}
\scalebox{.7} % Change this value to rescale the drawing.
{
\begin{pspicture}(0.3,-0.63)(.8,.2)
\psline[linewidth=0.02cm](0.6,0)(0.6,-0.61)
\psdots[dotsize=0.18](0.6,-0.01)
\end{pspicture}
}\end{array}\!\!\!\!\To\!\!\!\!\begin{array}{c}
\scalebox{.7} % Change this value to rescale the drawing.
{
\begin{pspicture}(0.3,-0.63)(.8,.2)
\psline[linewidth=0.02cm](0.6,0)(0.6,-0.61)
\psdots[dotsize=0.18,fillstyle=solid,dotstyle=o](0.6,-0.01)
\end{pspicture}
}\end{array}\!\!\!\!\right)\\
&=
g\left(\begin{array}{c}
\scalebox{.7} % Change this value to rescale the drawing.
{
\begin{pspicture}(0,-0.63)(1.22,0.7)
\psdots[dotsize=0.18](0.6,-0.01)
\psline[linewidth=0.02cm](0.6,0)(0.6,-0.61)
\psline[linewidth=0.02cm](0.6,0)(0.0,0.59)
\psline[linewidth=0.02cm](0.6,0)(1.2,0.59)
\end{pspicture}
}\end{array}\right)\circ_{1}
\left(
g\left(\begin{array}{c}
\scalebox{.7} % Change this value to rescale the drawing.
{
\begin{pspicture}(0.1,-0.63)(1.22,0.7)
\psdots[dotsize=0.18](0.6,-0.01)
\psline[linewidth=0.02cm](0.6,0)(0.6,-0.61)
\psline[linewidth=0.02cm](0.6,0)(0.0,0.59)
\psline[linewidth=0.02cm](0.6,0)(1.2,0.59)
\psdots[dotsize=0.18](1.2,0.59)
\end{pspicture}
}\end{array}\To\!\!\!\!\begin{array}{c}
\scalebox{.7} % Change this value to rescale the drawing.
{
\begin{pspicture}(0.3,-0.63)(.8,.1)
\psline[linewidth=0.02cm](0.6,0)(0.6,-0.61)
\end{pspicture}
}\end{array}\!\!\!\!\right)\;\circ_{1}f\left(\!\!\!\!\begin{array}{c}
\scalebox{.7} % Change this value to rescale the drawing.
{
\begin{pspicture}(0.4,-0.63)(.6,.2)
\psline[linewidth=0.02cm](0.6,0)(0.6,-0.61)
\psdots[dotsize=0.18,fillstyle=solid,dotstyle=o](0.6,-0.01)
\end{pspicture}
}\end{array}\!\right)
\right)
\\
&=
\left(g\left(\begin{array}{c}
\scalebox{.7} % Change this value to rescale the drawing.
{
\begin{pspicture}(0,-0.63)(1.22,0.7)
\psdots[dotsize=0.18](0.6,-0.01)
\psline[linewidth=0.02cm](0.6,0)(0.6,-0.61)
\psline[linewidth=0.02cm](0.6,0)(0.0,0.59)
\psline[linewidth=0.02cm](0.6,0)(1.2,0.59)
\end{pspicture}
}\end{array}\right)\circ_{1}
g\left(\begin{array}{c}
\scalebox{.7} % Change this value to rescale the drawing.
{
\begin{pspicture}(0.1,-0.63)(1.22,0.7)
\psdots[dotsize=0.18](0.6,-0.01)
\psline[linewidth=0.02cm](0.6,0)(0.6,-0.61)
\psline[linewidth=0.02cm](0.6,0)(0.0,0.59)
\psline[linewidth=0.02cm](0.6,0)(1.2,0.59)
\psdots[dotsize=0.18](1.2,0.59)
\end{pspicture}
}\end{array}\To\!\!\!\!\begin{array}{c}
\scalebox{.7} % Change this value to rescale the drawing.
{
\begin{pspicture}(0.3,-0.63)(.8,.1)
\psline[linewidth=0.02cm](0.6,0)(0.6,-0.61)
\end{pspicture}
}\end{array}\!\!\!\!\right)\right)\;\circ_{1}f\left(\!\!\!\!\begin{array}{c}
\scalebox{.7} % Change this value to rescale the drawing.
{
\begin{pspicture}(0.4,-0.63)(.6,.2)
\psline[linewidth=0.02cm](0.6,0)(0.6,-0.61)
\psdots[dotsize=0.18,fillstyle=solid,dotstyle=o](0.6,-0.01)
\end{pspicture}
}\end{array}\!\right)
\\
&=
g\left(\begin{array}{c}
\scalebox{.7} % Change this value to rescale the drawing.
{
\begin{pspicture}(0.1,-0.63)(1.22,0.7)
\psdots[dotsize=0.18](0.6,-0.01)
\psline[linewidth=0.02cm](0.6,0)(0.6,-0.61)
\psline[linewidth=0.02cm](0.6,0)(0.0,0.59)
\psline[linewidth=0.02cm](0.6,0)(0.6,0.59)
\psline[linewidth=0.02cm](0.6,0)(1.2,0.59)
\psdots[dotsize=0.18](.6,0.59)
\end{pspicture}
}\end{array}\To\!\!\!\!\begin{array}{c}
\scalebox{.7} % Change this value to rescale the drawing.
{
\begin{pspicture}(0,-0.63)(1.22,0.7)
\psdots[dotsize=0.18](0.6,-0.01)
\psline[linewidth=0.02cm](0.6,0)(0.6,-0.61)
\psline[linewidth=0.02cm](0.6,0)(0.0,0.59)
\psline[linewidth=0.02cm](0.6,0)(1.2,0.59)
\end{pspicture}
}\end{array}\right)\;\circ_{1}f\left(\!\!\!\!\begin{array}{c}
\scalebox{.7} % Change this value to rescale the drawing.
{
\begin{pspicture}(0.4,-0.63)(.6,.2)
\psline[linewidth=0.02cm](0.6,0)(0.6,-0.61)
\psdots[dotsize=0.18,fillstyle=solid,dotstyle=o](0.6,-0.01)
\end{pspicture}
}\end{array}\!
\right)
\\
&=
\left(g\left(\begin{array}{c}
\scalebox{.7} % Change this value to rescale the drawing.
{
\begin{pspicture}(0,-0.63)(1.22,0.7)
\psdots[dotsize=0.18](0.6,-0.01)
\psline[linewidth=0.02cm](0.6,0)(0.6,-0.61)
\psline[linewidth=0.02cm](0.6,0)(0.0,0.59)
\psline[linewidth=0.02cm](0.6,0)(1.2,0.59)
\end{pspicture}
}\end{array}\right)\circ_{2}
g\left(\begin{array}{c}
\scalebox{.7} % Change this value to rescale the drawing.
{
\begin{pspicture}(0.1,-0.63)(1.22,0.7)
\psdots[dotsize=0.18](0.6,-0.01)
\psline[linewidth=0.02cm](0.6,0)(0.6,-0.61)
\psline[linewidth=0.02cm](0.6,0)(0.0,0.59)
\psline[linewidth=0.02cm](0.6,0)(1.2,0.59)
\psdots[dotsize=0.18](0,0.59)
\end{pspicture}
}\end{array}\To\!\!\!\!\begin{array}{c}
\scalebox{.7} % Change this value to rescale the drawing.
{
\begin{pspicture}(0.3,-0.63)(.8,.1)
\psline[linewidth=0.02cm](0.6,0)(0.6,-0.61)
\end{pspicture}
}\end{array}\!\!\!\!\right)\right)\;\circ_{1}f\left(\!\!\!\!\begin{array}{c}
\scalebox{.7} % Change this value to rescale the drawing.
{
\begin{pspicture}(0.4,-0.63)(.6,.2)
\psline[linewidth=0.02cm](0.6,0)(0.6,-0.61)
\psdots[dotsize=0.18,fillstyle=solid,dotstyle=o](0.6,-0.01)
\end{pspicture}
}\end{array}\!\right)\\
&=
\left(f\left(\begin{array}{c}
\scalebox{.7} % Change this value to rescale the drawing.
{
\begin{pspicture}(0.1,-0.63)(1.22,0.7)
\psdots[dotsize=0.18](0.6,-0.01)
\psline[linewidth=0.02cm](0.6,0)(0.6,-0.61)
\psline[linewidth=0.02cm](0.6,0)(0.0,0.59)
\psline[linewidth=0.02cm](0.6,0)(1.2,0.59)
\end{pspicture}
}\end{array}\right)\;\circ_{1}f\left(\!\!\!\!\begin{array}{c}
\scalebox{.7} % Change this value to rescale the drawing.
{
\begin{pspicture}(0.4,-0.63)(.6,.2)
\psline[linewidth=0.02cm](0.6,0)(0.6,-0.61)
\psdots[dotsize=0.18,fillstyle=solid,dotstyle=o](0.6,-0.01)
\end{pspicture}
}\end{array}\!\right)\right)
\circ_{1}
g\left(\begin{array}{c}
\scalebox{.7} % Change this value to rescale the drawing.
{
\begin{pspicture}(0.1,-0.63)(1.22,0.7)
\psdots[dotsize=0.18](0.6,-0.01)
\psline[linewidth=0.02cm](0.6,0)(0.6,-0.61)
\psline[linewidth=0.02cm](0.6,0)(0.0,0.59)
\psline[linewidth=0.02cm](0.6,0)(1.2,0.59)
\psdots[dotsize=0.18](0,0.59)
\end{pspicture}
}\end{array}\To\!\!\!\!\begin{array}{c}
\scalebox{.7} % Change this value to rescale the drawing.
{
\begin{pspicture}(0.3,-0.63)(.8,.1)
\psline[linewidth=0.02cm](0.6,0)(0.6,-0.61)
\end{pspicture}
}\end{array}\!\!\!\!\right)\\
&=
g\left(\begin{array}{c}
\scalebox{.7} % Change this value to rescale the drawing.
{
\begin{pspicture}(0.1,-0.63)(1.22,0.7)
\psdots[dotsize=0.18](0.6,-0.01)
\psline[linewidth=0.02cm](0.6,0)(0.6,-0.61)
\psline[linewidth=0.02cm](0.6,0)(0.0,0.59)
\psline[linewidth=0.02cm](0.6,0)(1.2,0.59)
\psdots[dotsize=0.18](0,0.59)
\end{pspicture}
}\end{array}\To\!\!\!\!\begin{array}{c}
\scalebox{.7} % Change this value to rescale the drawing.
{
\begin{pspicture}(0.3,-0.63)(.8,.1)
\psline[linewidth=0.02cm](0.6,0)(0.6,-0.61)
\end{pspicture}
}\end{array}\!\!\!\!\right)=g(\lambda),
\end{align*}
\begin{align*}
h(\rho)&=
h\left(\begin{array}{c}
\scalebox{.7} % Change this value to rescale the drawing.
{
\begin{pspicture}(0,-0.63)(1.22,0.7)
\psdots[dotsize=0.18](0.6,-0.01)
\psline[linewidth=0.02cm](0.6,0)(0.6,-0.61)
\psline[linewidth=0.02cm](0.6,0)(0.0,0.59)
\psline[linewidth=0.02cm](0.6,0)(1.2,0.59)
\end{pspicture}
}\end{array}\right)\circ_{2}
h\left(\!\!\!\!\!\begin{array}{c}
\scalebox{.7} % Change this value to rescale the drawing.
{
\begin{pspicture}(0.3,-0.63)(.8,.2)
\psline[linewidth=0.02cm](0.6,0)(0.6,-0.61)
\psdots[dotsize=0.18](0.6,-0.01)
\end{pspicture}
}\end{array}\!\!\!\!\To\!\!\!\!\begin{array}{c}
\scalebox{.7} % Change this value to rescale the drawing.
{
\begin{pspicture}(0.3,-0.63)(.8,.2)
\psline[linewidth=0.02cm](0.6,0)(0.6,-0.61)
\psdots[dotsize=0.18,fillstyle=solid,dotstyle=o](0.6,-0.01)
\end{pspicture}
}\end{array}\!\!\!\!\right)\\
&=
g\left(\begin{array}{c}
\scalebox{.7} % Change this value to rescale the drawing.
{
\begin{pspicture}(0,-0.63)(1.22,0.7)
\psdots[dotsize=0.18](0.6,-0.01)
\psline[linewidth=0.02cm](0.6,0)(0.6,-0.61)
\psline[linewidth=0.02cm](0.6,0)(0.0,0.59)
\psline[linewidth=0.02cm](0.6,0)(1.2,0.59)
\end{pspicture}
}\end{array}\right)\circ_{2}
\left(
g\left(\begin{array}{c}
\scalebox{.7} % Change this value to rescale the drawing.
{
\begin{pspicture}(0.1,-0.63)(1.22,0.7)
\psdots[dotsize=0.18](0.6,-0.01)
\psline[linewidth=0.02cm](0.6,0)(0.6,-0.61)
\psline[linewidth=0.02cm](0.6,0)(0.0,0.59)
\psline[linewidth=0.02cm](0.6,0)(1.2,0.59)
\psdots[dotsize=0.18](1.2,0.59)
\end{pspicture}
}\end{array}\To\!\!\!\!\begin{array}{c}
\scalebox{.7} % Change this value to rescale the drawing.
{
\begin{pspicture}(0.3,-0.63)(.8,.1)
\psline[linewidth=0.02cm](0.6,0)(0.6,-0.61)
\end{pspicture}
}\end{array}\!\!\!\!\right)\;\circ_{1}f\left(\!\!\!\!\begin{array}{c}
\scalebox{.7} % Change this value to rescale the drawing.
{
\begin{pspicture}(0.4,-0.63)(.6,.2)
\psline[linewidth=0.02cm](0.6,0)(0.6,-0.61)
\psdots[dotsize=0.18,fillstyle=solid,dotstyle=o](0.6,-0.01)
\end{pspicture}
}\end{array}\!\right)
\right)
\\
&=
\left(g\left(\begin{array}{c}
\scalebox{.7} % Change this value to rescale the drawing.
{
\begin{pspicture}(0,-0.63)(1.22,0.7)
\psdots[dotsize=0.18](0.6,-0.01)
\psline[linewidth=0.02cm](0.6,0)(0.6,-0.61)
\psline[linewidth=0.02cm](0.6,0)(0.0,0.59)
\psline[linewidth=0.02cm](0.6,0)(1.2,0.59)
\end{pspicture}
}\end{array}\right)\circ_{2}
g\left(\begin{array}{c}
\scalebox{.7} % Change this value to rescale the drawing.
{
\begin{pspicture}(0.1,-0.63)(1.22,0.7)
\psdots[dotsize=0.18](0.6,-0.01)
\psline[linewidth=0.02cm](0.6,0)(0.6,-0.61)
\psline[linewidth=0.02cm](0.6,0)(0.0,0.59)
\psline[linewidth=0.02cm](0.6,0)(1.2,0.59)
\psdots[dotsize=0.18](1.2,0.59)
\end{pspicture}
}\end{array}\To\!\!\!\!\begin{array}{c}
\scalebox{.7} % Change this value to rescale the drawing.
{
\begin{pspicture}(0.3,-0.63)(.8,.1)
\psline[linewidth=0.02cm](0.6,0)(0.6,-0.61)
\end{pspicture}
}\end{array}\!\!\!\!\right)\right)\;\circ_{2}f\left(\!\!\!\!\begin{array}{c}
\scalebox{.7} % Change this value to rescale the drawing.
{
\begin{pspicture}(0.4,-0.63)(.6,.2)
\psline[linewidth=0.02cm](0.6,0)(0.6,-0.61)
\psdots[dotsize=0.18,fillstyle=solid,dotstyle=o](0.6,-0.01)
\end{pspicture}
}\end{array}\!\right)
\\
&=
g\left(\begin{array}{c}
\scalebox{.7} % Change this value to rescale the drawing.
{
\begin{pspicture}(0.1,-0.63)(1.22,0.7)
\psdots[dotsize=0.18](0.6,-0.01)
\psline[linewidth=0.02cm](0.6,0)(0.6,-0.61)
\psline[linewidth=0.02cm](0.6,0)(0.0,0.59)
\psline[linewidth=0.02cm](0.6,0)(0.6,0.59)
\psline[linewidth=0.02cm](0.6,0)(1.2,0.59)
\psdots[dotsize=0.18](1.2,0.59)
\end{pspicture}
}\end{array}\To\!\!\!\!\begin{array}{c}
\scalebox{.7} % Change this value to rescale the drawing.
{
\begin{pspicture}(0,-0.63)(1.22,0.7)
\psdots[dotsize=0.18](0.6,-0.01)
\psline[linewidth=0.02cm](0.6,0)(0.6,-0.61)
\psline[linewidth=0.02cm](0.6,0)(0.0,0.59)
\psline[linewidth=0.02cm](0.6,0)(1.2,0.59)
\end{pspicture}
}\end{array}\right)\;\circ_{2}f\left(\!\!\!\!\begin{array}{c}
\scalebox{.7} % Change this value to rescale the drawing.
{
\begin{pspicture}(0.4,-0.63)(.6,.2)
\psline[linewidth=0.02cm](0.6,0)(0.6,-0.61)
\psdots[dotsize=0.18,fillstyle=solid,dotstyle=o](0.6,-0.01)
\end{pspicture}
}\end{array}\!
\right)
\\
&=
\left(
g\left(\begin{array}{c}
\scalebox{.7} % Change this value to rescale the drawing.
{
\begin{pspicture}(0.1,-0.63)(1.22,0.7)
\psdots[dotsize=0.18](0.6,-0.01)
\psline[linewidth=0.02cm](0.6,0)(0.6,-0.61)
\psline[linewidth=0.02cm](0.6,0)(0.0,0.59)
\psline[linewidth=0.02cm](0.6,0)(1.2,0.59)
\psdots[dotsize=0.18](1.2,0.59)
\end{pspicture}
}\end{array}\To\!\!\!\!\begin{array}{c}
\scalebox{.7} % Change this value to rescale the drawing.
{
\begin{pspicture}(0.3,-0.63)(.8,.1)
\psline[linewidth=0.02cm](0.6,0)(0.6,-0.61)
\end{pspicture}
}\end{array}\!\!\!\!\right)\circ_{1}g\left(\begin{array}{c}
\scalebox{.7} % Change this value to rescale the drawing.
{
\begin{pspicture}(0,-0.63)(1.22,0.7)
\psdots[dotsize=0.18](0.6,-0.01)
\psline[linewidth=0.02cm](0.6,0)(0.6,-0.61)
\psline[linewidth=0.02cm](0.6,0)(0.0,0.59)
\psline[linewidth=0.02cm](0.6,0)(1.2,0.59)
\end{pspicture}
}\end{array}\right)\right)\;\circ_{2}f\left(\!\!\!\!\begin{array}{c}
\scalebox{.7} % Change this value to rescale the drawing.
{
\begin{pspicture}(0.4,-0.63)(.6,.2)
\psline[linewidth=0.02cm](0.6,0)(0.6,-0.61)
\psdots[dotsize=0.18,fillstyle=solid,dotstyle=o](0.6,-0.01)
\end{pspicture}
}\end{array}\!\right)\\
&=
g\left(\begin{array}{c}
\scalebox{.7} % Change this value to rescale the drawing.
{
\begin{pspicture}(0.1,-0.63)(1.22,0.7)
\psdots[dotsize=0.18](0.6,-0.01)
\psline[linewidth=0.02cm](0.6,0)(0.6,-0.61)
\psline[linewidth=0.02cm](0.6,0)(0.0,0.59)
\psline[linewidth=0.02cm](0.6,0)(1.2,0.59)
\psdots[dotsize=0.18](1.2,0.59)
\end{pspicture}
}\end{array}\To\!\!\!\!\begin{array}{c}
\scalebox{.7} % Change this value to rescale the drawing.
{
\begin{pspicture}(0.3,-0.63)(.8,.1)
\psline[linewidth=0.02cm](0.6,0)(0.6,-0.61)
\end{pspicture}
}\end{array}\!\!\!\!\right)\circ_{1}\left(f\left(\begin{array}{c}
\scalebox{.7} % Change this value to rescale the drawing.
{
\begin{pspicture}(0,-0.63)(1.22,0.7)
\psdots[dotsize=0.18](0.6,-0.01)
\psline[linewidth=0.02cm](0.6,0)(0.6,-0.61)
\psline[linewidth=0.02cm](0.6,0)(0.0,0.59)
\psline[linewidth=0.02cm](0.6,0)(1.2,0.59)
\end{pspicture}
}\end{array}\right)\;\circ_{2}f\left(\!\!\!\!\begin{array}{c}
\scalebox{.7} % Change this value to rescale the drawing.
{
\begin{pspicture}(0.4,-0.63)(.6,.2)
\psline[linewidth=0.02cm](0.6,0)(0.6,-0.61)
\psdots[dotsize=0.18,fillstyle=solid,dotstyle=o](0.6,-0.01)
\end{pspicture}
}\end{array}\!\right)\right)\\
&=
g\left(\begin{array}{c}
\scalebox{.7} % Change this value to rescale the drawing.
{
\begin{pspicture}(0.1,-0.63)(1.22,0.7)
\psdots[dotsize=0.18](0.6,-0.01)
\psline[linewidth=0.02cm](0.6,0)(0.6,-0.61)
\psline[linewidth=0.02cm](0.6,0)(0.0,0.59)
\psline[linewidth=0.02cm](0.6,0)(1.2,0.59)
\psdots[dotsize=0.18](1.2,0.59)
\end{pspicture}
}\end{array}\To\!\!\!\!\begin{array}{c}
\scalebox{.7} % Change this value to rescale the drawing.
{
\begin{pspicture}(0.3,-0.63)(.8,.1)
\psline[linewidth=0.02cm](0.6,0)(0.6,-0.61)
\end{pspicture}
}\end{array}\!\!\!\!\right)=g(\rho).
\end{align*}
This concludes the proof.
\end{proof}

\section{Main theorem for DG-operads}\label{DG}

Let $\operatorname{Ch}(\Bbbk)$ be the category of DG-modules over a ground commutative ring $\Bbbk$.   Weak equivalences in $\operatorname{Ch}(\Bbbk)$ are quasi-isomorphisms and fibrations are levelwise surjective maps. In this way, $\operatorname{Ch}(\Bbbk)$ with the usual tensor product becomes a combinatorial closed symmetric monoidal model category with cofibrant tensor unit and satisfying the monoid axiom, compare \cite[Proposition 4.2.13]{hmc}. Hence, $\operad{\operatorname{Ch}(\Bbbk)}$ has the model structure in Theorem \ref{modelo}. The main result of this section is the following theorem.

\begin{thm}\label{hepiDG}
The morphism $\phi^{\operatorname{Ch}(\Bbbk)}\colon \mathtt{Ass}^{\operatorname{Ch}(\Bbbk)}\r \mathtt{uAss}^{\operatorname{Ch}(\Bbbk)}$ in Example \ref{comparison} is a homotopy epimorphism in $\operad{\operatorname{Ch}(\Bbbk)}$. 
\end{thm}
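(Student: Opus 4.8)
The plan is to mimic the groupoid case from Section \ref{grd}, using the machinery of Section \ref{cylinder}. By Lemma \ref{basico}, since $\operatorname{Ch}(\Bbbk)$ has cofibrant tensor unit (hence satisfies the strong unit axiom), it suffices to produce a concrete $u$-infinity associative operad $\mathtt{u}_{\infty}\mathtt{A}^{\operatorname{Ch}(\Bbbk)}$, a concrete $u$-infinity unital associative operad $\mathtt{u}_{\infty}\mathtt{uA}^{\operatorname{Ch}(\Bbbk)}$ fitting into the push-out square of Definition \ref{lol}, and to check that the map $\varphi^{\operatorname{Ch}(\Bbbk)}\colon\mathtt{uAss}^{\operatorname{Ch}(\Bbbk)}\into\mathtt{u}_{\infty}\mathtt{uA}^{\operatorname{Ch}(\Bbbk)}$ is a weak equivalence. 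The point is that everything should be built as explicit relative $\mathcal F_{\operatorname{Ch}(\Bbbk)}(I_{\mathbb N})$-cell complexes, so that the deformation-retraction technology of Lemma \ref{deform}, Lemma \ref{casi} and Remark \ref{extenda} applies.

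First I would write down $\mathtt{u}_{\infty}\mathtt{A}^{\operatorname{Ch}(\Bbbk)}$ as a cofibration $\bar\phi^{\operatorname{Ch}(\Bbbk)}_{\infty}\colon\mathtt{Ass}^{\operatorname{Ch}(\Bbbk)}\into\mathtt{u}_{\infty}\mathtt{A}^{\operatorname{Ch}(\Bbbk)}$ obtained by adjoining to $\mathtt{Ass}^{\operatorname{Ch}(\Bbbk)}$ a degree $0$ arity $0$ generator $u$ with $d(u)=0$, together with degree $1$ generators $\eta_n^{(i)}$, placed in arity $n-1$, witnessing the $A_\infty$-style relations $\mu^{n-1}(\id{},\dots,u,\dots,\id{})\simeq\mu^{n-2}$ (one for each position $i$ of the cork and each $n\geq2$), plus iterated higher homotopies between these; concretely the differential of each new generator is prescribed inductively so that $d$ decreases the filtration, exactly as in Corollary \ref{elementary2}. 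The combinatorics is the expected one: generators are indexed by corollas carrying corks, and the homotopies trivialize each cork. The quasi-isomorphism $\mathtt{u}_{\infty}\mathtt{A}^{\operatorname{Ch}(\Bbbk)}\st{\sim}\r\mathtt{uAss}^{\operatorname{Ch}(\Bbbk)}$ then follows by exhibiting a strong deformation retraction via Remark \ref{extenda}, contracting all the added generators away.

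Next I would form $\mathtt{u}_{\infty}\mathtt{uA}^{\operatorname{Ch}(\Bbbk)}$ as the push-out of $\bar\phi^{\operatorname{Ch}(\Bbbk)}_{\infty}$ along $\phi^{\operatorname{Ch}(\Bbbk)}$ (Definition \ref{lol}); since $\bar\phi^{\operatorname{Ch}(\Bbbk)}_{\infty}$ is a relative cell complex, the push-out is again one, obtained from $\mathtt{uAss}^{\operatorname{Ch}(\Bbbk)}$ by adjoining a second arity $0$ generator $u'$ in degree $0$ with $d(u')=0$, the images of the $\eta_n^{(i)}$, and, crucially, a degree $1$ arity $0$ generator $\omega$ with $d(\omega)=u-u'$ (coming from the fact that in $\mathtt{uAss}$ both $u$ and the old unit become the same thing). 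The task is then to show $\varphi^{\operatorname{Ch}(\Bbbk)}\colon\mathtt{uAss}^{\operatorname{Ch}(\Bbbk)}\into\mathtt{u}_{\infty}\mathtt{uA}^{\operatorname{Ch}(\Bbbk)}$ is a quasi-isomorphism in every arity. For this I would use Remark \ref{extenda}: define $\bar h$ on the basis of the free part so that $\bar h(u')=-\omega$, $\bar h(\omega)=0$, $\bar h$ sends each $\eta_n^{(i)}$ to the appropriate lower-filtration expression, and check that the resulting $f$ from Lemma \ref{deform} lands inside $\mathtt{uAss}^{\operatorname{Ch}(\Bbbk)}$; this yields a strong deformation retraction $\mathtt{uAss}^{\operatorname{Ch}(\Bbbk)}\rightleftarrows\mathtt{u}_{\infty}\mathtt{uA}^{\operatorname{Ch}(\Bbbk)}$, and Lemma \ref{casi} gives that $\varphi^{\operatorname{Ch}(\Bbbk)}$ is a trivial cofibration. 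Then Lemma \ref{basico} finishes the proof.

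The main obstacle I anticipate is the bookkeeping in the inductive definition of the differentials on the higher generators $\eta_n^{(i)}$ and the verification that the candidate contracting assignment $\bar h$ actually satisfies the hypothesis $f(S)\subset\mathcal O$ of Remark \ref{extenda} — i.e.\ that deforming along $\bar h$ does not create terms outside $\mathtt{uAss}^{\operatorname{Ch}(\Bbbk)}$. This is a coherence problem of the kind flagged in the remark after Definition \ref{relativehomotopy}: one must choose the filtration and the generators carefully (essentially, model the cells on corollas-with-corks and organize them so that each cork-trivializing homotopy only involves strictly fewer corks), so that the relative-derivation formulas close up consistently. Everything else — that the constructions are relative $\mathcal F_{\operatorname{Ch}(\Bbbk)}(I_{\mathbb N})$-cell complexes, that push-outs of cell complexes are cell complexes, that $\operatorname{Ch}(\Bbbk)$ satisfies the hypotheses of Theorem \ref{modelo} and the strong unit axiom — is already in place.
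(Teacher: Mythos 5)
Your overall strategy coincides with the paper's: reduce via Lemma \ref{basico} to showing that $\varphi^{\operatorname{Ch}(\Bbbk)}$ is a weak equivalence, realize $\bar\phi^{\operatorname{Ch}(\Bbbk)}_{\infty}$ as an explicit relative $\mathcal F_{\operatorname{Ch}(\Bbbk)}(I_{\mathbb N})$-cell complex whose cells are indexed by corollas with corks, and contract the push-out onto $\mathtt{uAss}^{\operatorname{Ch}(\Bbbk)}$ using the strong-deformation-retraction machinery of Section \ref{cylinder}. However, there is a concrete error at the heart of your argument: your description of the push-out is wrong. Pushing out a relative cell complex along $\phi^{\operatorname{Ch}(\Bbbk)}$ attaches \emph{exactly the same cells} to $\mathtt{uAss}^{\operatorname{Ch}(\Bbbk)}$ (the arity-$0$ cycle $u'$ and the higher homotopies) and nothing else. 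No generator $\omega$ with $d(\omega)=u-u'$ appears: $u'$ is a freely attached cell of $\mathtt{u}_{\infty}\mathtt{A}^{\operatorname{Ch}(\Bbbk)}$, not an element of $\mathtt{Ass}^{\operatorname{Ch}(\Bbbk)}$, so the push-out identifies nothing and creates no new coherence cell. Since your contraction begins with $\bar h(u')=-\omega$, it does not get off the ground as written. The required homology between $u$ and $u'$ must instead be extracted from the operad structure: in the paper's notation, $d\bigl(-\nu_2^{\{2\}}\circ_1 u\bigr)=u-\nu_1^{\{1\}}$, using the relation $\mu\circ_1u=\id{}$ of $\mathtt{uAss}^{\operatorname{Ch}(\Bbbk)}$ together with $d(\nu_2^{\{2\}})=\mu\circ_2\nu_1^{\{1\}}-\id{}$. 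This mechanism --- capping a cork position with the strict unit $u$ --- is precisely what drives the entire contraction, and it is invisible in your setup because the fictitious $\omega$ short-circuits it.

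Beyond that, the substantive content is deferred rather than supplied. The paper's proof works because the differential is completely explicit (Definition \ref{rafiki}, imported from the unital associahedra of \cite{uass}, with generators $\nu_n^S$ recording the set $S$ of cork positions), the filtration is by the number of corks $|S|$, and there is a closed formula $h(\nu_n^S)=(-1)^{\min S}\nu_{n+1}^{S+1}\circ_{\min S}u$ for the contracting homotopy, for which Lemma \ref{gordo} verifies by a delicate cancellation that the deformed morphism $f$ of Lemma \ref{deform} lands in the previous filtration step. Identifying this filtration and this homotopy, and checking the signs and cancellations, \emph{is} the proof; it is not routine bookkeeping that can be waved through. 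Finally, a smaller point: you cannot obtain the weak equivalence $\mathtt{u}_{\infty}\mathtt{A}^{\operatorname{Ch}(\Bbbk)}\st{\sim}\To\mathtt{uAss}^{\operatorname{Ch}(\Bbbk)}$ from Remark \ref{extenda}, since that device retracts a relative cell complex onto its base, which here is $\mathtt{Ass}^{\operatorname{Ch}(\Bbbk)}$, not $\mathtt{uAss}^{\operatorname{Ch}(\Bbbk)}$ (the latter is not a suboperad of $\mathtt{u}_{\infty}\mathtt{A}^{\operatorname{Ch}(\Bbbk)}$); the paper takes this equivalence from the contractibility of unital associahedra.
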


This theorem follows from Lemma \ref{basico} above and Corollary \ref{ultimo} below.

Operads in ${\operatorname{Mod}(\Bbbk)^{\mathbb Z}}$ and ${\operatorname{Ch}(\Bbbk)}$ are called \emph{graded operads} and \emph{DG-operads}, respectively.  In this section we consider free graded operads and free DG-operads. For the sake of simplicity, we omit the subscript from the free graded operad functor $\mathcal F=\mathcal F_{\operatorname{Mod}(\Bbbk)^{\mathbb Z}}$, but not from the free DG-operad functor $\mathcal F_{\operatorname{Ch}(\Bbbk)}$, in order to avoid confusion. The underlying graded operad of a free DG-operad $\mathcal F_{\operatorname{Ch}(\Bbbk)}(V)$ on a sequence of DG-modules $V$ is the free graded operad $\mathcal F(V)$ on the underlying sequence of graded modules. This follows from the fact that the forgetful functor $\operatorname{Ch}(\Bbbk)\r \operatorname{Mod}(\Bbbk)^{\mathbb Z}$ strictly preserves tensor products, compare Remark \ref{free}. Moreover, this forgetful functor is a left adjoint since it is a colimit preserving functor between locally 
presentable categories, see \cite[Theorem 3.3.4]{borceux1} and \cite[Theorem 1.58]{adamekrosicky} (it is actually easy to construct an explicit right adjoint), hence it induces a functor between operad categories $\operad{\operatorname{Ch}(\Bbbk)}\r \operad{\operatorname{Mod}(\Bbbk)^{\mathbb Z}}$ which is also a left adjoint, in particular it preserves all colimits.

We denote by $S^n$ the module $\Bbbk$ regarded as a DG-module concentrated in degree~$n$,
$$S^n=\{\cdots\r0\r \hspace{-12pt}\mathop{\Bbbk}\limits_{\text{degree }n}\hspace{-12pt}\r0\r\cdots\}.$$
We denote  its mapping cone by $D^{n+1}$
$$D^{n+1}=\{\cdots\r0\r\Bbbk\st{1_{\Bbbk}}\r \hspace{-12pt}\mathop{\Bbbk}\limits_{\text{degree }n}\hspace{-12pt}\r0\r\cdots\}.$$
Let $f_n\colon S^n\r D^{n+1}$ be the morphism given by the identity in degree $n$. The standard sets of generating (trivial) cofibrations in $\operatorname{Ch}(\Bbbk)$ are
$$I=\{f_n\}_{n\in\mathbb Z},\qquad J=\{0\r D^{n+1}\}_{n\in\mathbb Z}.$$
Hence $\mathcal F_{\operatorname{Ch}(\Bbbk)}(I_{\mathbb N})$ and $\mathcal F_{\operatorname{Ch}(\Bbbk)}(J_{\mathbb N})$ are sets of generating (trivial) cofibrations in $\operad{\operatorname{Ch}(\Bbbk)}$, see Remark \ref{generatriz}.

We now analyze the structure of relative $\mathcal F_{\operatorname{Ch}(\Bbbk)}(I_{\mathbb N})$-cell complexes. The category of \emph{graded sets} is simply $\operatorname{Set}^{\mathbb Z}$. A \emph{sequence of graded sets} $S$, i.e.~an object in $(\operatorname{Set}^{\mathbb Z})^{\mathbb N}$, can be regarded as a plain set equipped with a map $S\r\mathbb N\times\mathbb Z$ sending each element to the pair given by its arity and its degree. The sequence of free graded modules $\Bbbk\cdot S$ is obtained by taking free graded module aritywise.

\begin{prop}\label{estructura}
A morphism of DG-operads $i\colon\mathcal O\r\mathcal P$ is a relative $\mathcal F_{\operatorname{Ch}(\Bbbk)}(I_{\mathbb N})$-cell complex if and only if there is a sequence of graded subsets $S\subset\mathcal P$ 
equipped with a continuous filtration $\{S_{\beta}\}_{\beta\leq\alpha}$, $\alpha$ an ordinal, $S_{0}=\varnothing$, $S_{\alpha}=S$, 
such that $i$ and  the inclusion $S\subset\mathcal P$  induce an isomorphism of graded operads $\mathcal O\amalg\mathcal F(\Bbbk\cdot S)=\mathcal P$ and, for any $\beta<\alpha$,  
$$d(S_{\beta+1})\subset \mathcal O\amalg\mathcal F(\Bbbk\cdot S_{\beta}).$$
\end{prop}

This proposition is a consequence of the following characterization of push-outs of generating cofibrations in $\operad{\operatorname{Ch}(\Bbbk)}$.

\begin{lem}\label{unosolo}
Given a DG-operad $\mathcal P$ and a cycle $y\in\mathcal P(m)_{n}$, $d(y)=0$, there is a unique DG-operad structure $\mathcal Q$ on $\mathcal P\amalg\mathcal F(\Bbbk\cdot\{x\})$, where $x$ has arity $m$ and degree $n+1$, such that the inclusion of the first factor $j\colon \mathcal P\r \mathcal P\amalg\mathcal F(\Bbbk\cdot\{x\})$ is a morphism of DG-operads $j\colon\mathcal P\r\mathcal Q$ and $d(x)=y$. Moreover, such a $j$ is exactly the same as a push-out of $\mathcal F_{\operatorname{Ch}(\Bbbk)}(f_{n}[m])$.
\end{lem}

\begin{proof}
A morphism $h\colon S^{n}\r X$ in $\operatorname{Ch}(\Bbbk)$ is determined by the choice of a cycle $h(1)\in X_{n}$, hence a cycle $y\in\mathcal P(m)_{n}$ is the same as a morphism $g\colon\mathcal F_{\operatorname{Ch}(\Bbbk)}(S^{n}[m])\r\mathcal P$. As a graded module $D^{n+1}=S^{n}\amalg S^{n+1}$ and $f_{n}$ is the inclusion of the first factor, therefore the push-out 
$$\xymatrix@C=50pt{
\mathcal F_{\operatorname{Ch}(\Bbbk)}(S^n[m])
\ar@{>->}[r]^-{\mathcal F_{\operatorname{Ch}(\Bbbk)}(f_n[m])}\ar[d]_g\ar@{}[rd]|{\text{push}}&
\mathcal F_{\operatorname{Ch}(\Bbbk)}(D^{n+1}[m])\ar[d]^{\bar g}\\
\mathcal P\ar@{>->}[r]^-j&\mathcal Q
}$$
is $\mathcal Q=\mathcal P\amalg \mathcal F(S^{n+1}[m])$ as a graded operad. Notice also that $S^{n+1}[m]=\Bbbk\cdot\{x\}$ for $x$ of arity $m$ and degree $n+1$.  As a DG-operad, $\mathcal Q$  is characterized by the fact that $j$ is a morphism in $\operad{\operatorname{Ch}(\Bbbk)}$ and that the morphism $D^{n+1}\r \mathcal Q(m)$ defined by $\bar g$, which sends the generators of degree $n$ and $n+1$ to $y$ and $x$, respectively, preserves the differential. The last condition is equivalent to $d(x)=y$.
\end{proof}

Lemma \ref{unosolo} shows the apparently stronger statement that in Proposition \ref{estructura} we can always take $S_{\beta+1}\setminus S_{\beta}$ to be a singleton, $\beta<\alpha$. Both statements are equivalent since we can refine the filtration by putting well orderings on the sets $S_{\beta+1}\setminus S_{\beta}$, $\beta<\alpha$.

We wish to stress that $\mathcal Q$ in the statement of Lemma \ref{unosolo} is the graded operad $\mathcal P\amalg\mathcal F(\Bbbk\cdot\{x\})$ equipped with the differential defined therein.

We now present %formally 
a  $u$-infinity associative operad in $\operatorname{Ch}(\Bbbk)$, in the sense of Definition \ref{uiu}. %We advice the reader to look before at Remark \ref{precisar2}, which comes right afterwards and gives an easier  description of this operad in terms of trees.

\begin{defn}\label{rafiki}
The \emph{$u$-infinity associative DG-operad} $\mathtt{u}_{\infty}\mathtt{A}^{\operatorname{Ch}(\Bbbk)}$ is built on the graded operad with generators
\begin{align*}
\mu&\in \mathtt{u}_{\infty}\mathtt{A}^{\operatorname{Ch}(\Bbbk)}(2)_{0}, &
\nu_n^{S}&\in \mathtt{u}_{\infty}\mathtt{A}^{\operatorname{Ch}(\Bbbk)}(n-|S|)_{n-2+|S|},
\end{align*}
where $n>0$, $\varnothing\neq S\subset\{1,\dots,n\}$, and $|S|$ is the cardinal of $S$, 
and relations
$$\mu\circ_1\mu=\mu\circ_2\mu.$$
The differential is given by 
\begin{align*}
d(\mu)&=0,&
d(\nu_2^{\{1\}})&=\mu\circ_1\nu_1^{\{1\}}-\id{},\\
d(\nu_1^{\{1\}})&=0,&d(\nu_2^{\{2\}})&=\mu\circ_2\nu_1^{\{1\}}-\id{},%\quad j\in\{1,2\},
\end{align*}
and if $n>1$, $(n,|S|)\neq (2,1)$,
\begin{align*}
d(\nu_n^S)={}&(-1)^{n}\mu\circ_1\nu_{n-1}^{S}\quad\text{unless $n$ is the last element of }S\\
&+\mu\circ_2\nu_{n-1}^{S-1}\quad\text{unless $1$ is the first element of }S\\
&+\hspace{-20pt}\sum_{
\begin{array}{c}\\[-17pt]
\scriptstyle 1\leq v\leq |S|+1\\[-5pt]
\scriptstyle l_{v-1}< i+v-1< l_{v}-1
\end{array}
}
\hspace{-20pt}
(-1)^{i+v-1}\nu_{n-1}^{S_{v}\cup(S_{v}'-1)}\circ_i\mu\\
&+\sum_{
\begin{array}{c}\\[-17pt]
\scriptstyle p+q=n+1 \\[-5pt]
\scriptstyle 1\leq i\leq p-|S_1|\\[-5pt]
\scriptstyle S_1\circ_iS_2=S\\[-5pt]
\scriptstyle S_1,S_2\neq \varnothing
\end{array}
}(-1)^{q(p-|S_1|)+(q-1)(i+r-1)+|S_2|(r-1)}\nu_p^{S_1}\circ_i\nu_q^{S_2}.
\end{align*}
Here we denote $S+m=\{s+m\,;\,s\in S\}$, 
 $S=\{l_{1},\dots,l_{|S|}\}\subset\{1,\dots,n\}$,  $l_0=0$, $l_{|S|+1}=n+1$,
\begin{align*}
S_{v}&=\{l_{1},\dots,l_{v-1}\},&S'_{v}=&S\setminus S_{v}=\{l_{v},\dots,l_{|S|}\},&1\leq v\leq |S|+1.
\end{align*}
Moreover, if
$$S_1=\{j_1,\dots,j_s\}\subset\{1,\dots,p\},\qquad
S_2=\{k_1,\dots,k_t\}\subset\{1,\dots,q\},$$
and the $i^{\text{th}}$ element of the complement of $S_1$ lies between $j_{r-1}$ and $j_r$, then the subset $S_1\circ_i S_2\subset\{1,\dots,n\}$ is
$$S_1\circ_i S_2=\{j_1,\dots,j_{r-1},k_1+i+r-2,\dots,k_t+i+r-2,j_r+q-1,\dots, j_s+q-1\}.$$
We understand that $r=1$ if $i<j_1$, and $r=s+1$ if the $i^{\text{th}}$ element of the complement of $S_1$ is bigger than $j_s$. 

Notice that in the last line of the generic definition of $d(\nu_n^S)$, $|S|=|S_1|+|S_2|$ and $|S_1|,|S_2|>0$, so $|S_1|,|S_2|<|S|$.

The factorization
$$\mathtt{Ass}^{\operatorname{Ch}(\Bbbk)}\st{\bar \phi^{\operatorname{Ch}(\Bbbk)}_\infty}\into \mathtt{u}_{\infty}\mathtt{A}^{\operatorname{Ch}(\Bbbk)}\st{\sim}\To \mathtt{uAss}^{\operatorname{Ch}(\Bbbk)}$$
is given as follows: $\bar \phi^{\operatorname{Ch}(\Bbbk)}_\infty$ is the obvious inclusion, $\bar \phi^{\operatorname{Ch}(\Bbbk)}_\infty(\mu)=\mu$, and the weak equivalence (actually a trivial fibration in this case) is defined by
\begin{align*}
\mu&\mapsto\mu,&
\nu_1^{\{1\}}&\mapsto u,&
\nu_n^S&\mapsto 0,\quad n>1.
\end{align*}
Notice that $\bar \phi^{\operatorname{Ch}(\Bbbk)}_\infty$ is a relative $\mathcal F_{\operatorname{Ch}(\Bbbk)}(I_{\mathbb N})$-cell complex, see Remark \ref{itis} below.
\end{defn}

\begin{rem}\label{elu}
The DG-operad $\mathtt{u}_{\infty}\mathtt{A}^{\operatorname{Ch}(\Bbbk)}$ has been obtained by applying the method in Remark \ref{construcciones}. Indeed, a cofibrant resolution $\phi_{\infty}^{\operatorname{Ch}(\Bbbk)}\colon \mathtt{A}_{\infty}^{\operatorname{Ch}(\Bbbk)}\into \mathtt{uA}_{\infty}^{\operatorname{Ch}(\Bbbk)}$ of $\phi^{\operatorname{Ch}(\Bbbk)}$ has been constructed in \cite{uass}. Here $\mathtt{A}_{\infty}^{\operatorname{Ch}(\Bbbk)}$ is Stasheff's \emph{$A$-infinity DG-operad} \cite{hahs}, given by the cellular homology of associahedra $K_n$, $\mathtt{A}_{\infty}^{\operatorname{Ch}(\Bbbk)}(n)=C_*(K_n,\Bbbk)$, $n\geq 0$. The \emph{unital $A$-infinity DG-operad} $\mathtt{uA}_{\infty}^{\operatorname{Ch}(\Bbbk)}$ is similarly given by the cellular homology of the unital associahedra $K^u_n$ introduced in \cite{uass}, $\mathtt{uA}_{\infty}^{\operatorname{Ch}(\Bbbk)}(n)=C_*(K^u_n,\Bbbk)$, and the morphism $\phi_{\infty}^{\operatorname{Ch}(\Bbbk)}$ is induced by the inclusion of 
associahedra into unital associahedra $K_n\subset K^u_n$, $n\geq 0$. 

Recall that, as a graded operad, $\mathtt{A}_{\infty}^{\operatorname{Ch}(\Bbbk)}$ is free on generators $\mu_n$ of arity $n$ and degree $n-2$, $n\geq 2$. Analogously, $\mathtt{uA}_{\infty}^{\operatorname{Ch}(\Bbbk)}$ is free as a graded operad on generators $\mu_n^S$, $n\geq 1$, $S\subset\{1,\dots, n\}$, $(n,S)\neq (1,\varnothing)$, of arity $n-|S|$ and degree $n+|S|-2$. We refer the reader to \cite[\S5]{uass} for the definition of the differentials in these DG-operads. The morphism $\phi_{\infty}^{\operatorname{Ch}(\Bbbk)}$ is given by $\mu_n\mapsto\mu_n^{\varnothing}$, and it is actually a relative $\mathcal F_{\operatorname{Ch}(\Bbbk)}(I_{\mathbb N})$-cell complex by Proposition \ref{estructura} since
$d(\mu_n^S)\in\mathcal F(\Bbbk\cdot\{\mu_{n'}^{S'}\,;\,n'<n\text{ or }|S'|<|S|\})$. The vertical weak equivalences in the commutative square
$$\xymatrix{\mathtt{A}^{\operatorname{Ch}(\Bbbk)}_{\infty}\ar@{ >->}[r]^{\phi^{\operatorname{Ch}(\Bbbk)}_{\infty}}\ar[d]_{\sim}& \mathtt{uA}^{\operatorname{Ch}(\Bbbk)}_{\infty}\ar[d]^{\sim}\\
\mathtt{Ass}^{\operatorname{Ch}(\Bbbk)}\ar[r]^{\phi^{\operatorname{Ch}(\Bbbk)}}&\mathtt{uAss}^{\operatorname{Ch}(\Bbbk)}}$$
are respectively given by
\begin{align*}
\mu_2&\mapsto\mu,
&
\mu_2^{\varnothing}&\mapsto\mu,\\
\mu_n&\mapsto 0, \quad n>2,&
\mu_1^{\{1\}}&\mapsto u,\\
&&\mu_n^S&\mapsto 0\quad\text{otherwise}.
\end{align*}

Consider the square $$\xymatrix{\mathtt{A}^{\operatorname{Ch}(\Bbbk)}_{\infty}\ar@{ >->}[r]^{\phi^{\operatorname{Ch}(\Bbbk)}_{\infty}}\ar[d]_{\sim}& \mathtt{uA}^{\operatorname{Ch}(\Bbbk)}_{\infty}\ar[d]\\
\mathtt{Ass}^{\operatorname{Ch}(\Bbbk)}\ar@{ >->}[r]^{\bar\phi^{\operatorname{Ch}(\Bbbk)}_{\infty}}&\mathtt{u}_{\infty}\mathtt{A}^{\operatorname{Ch}(\Bbbk)}}$$
where the left vertical arrow is the previous weak equivalence and the right vertical arrow is given by 
\begin{align*}
\mu_2^{\varnothing}&\mapsto\mu,\\
\mu_n^{\varnothing}&\mapsto 0,\quad n>2,\\
\mu_n^{S}&\mapsto\nu_n^S\quad\text{otherwise}.
\end{align*}
The differential in $\mathtt{u}_{\infty}\mathtt{A}^{\operatorname{Ch}(\Bbbk)}$ has been defined so that this is indeed a morphism of DG-operads. This square clearly commutes. Moreover, it is a push-out by the very definition of the underlying graded operads. In particular we deduce that the right vertical arrow is a weak equivalence, see Remark \ref{construcciones}.

We should finally remark that algebras over $\mathtt{uA}_{\infty}^{\operatorname{Ch}(\Bbbk)}$ were 
first considered in \cite{fooo1,fooo2}.
\end{rem}

\begin{rem}\label{uuu2}
The \emph{$u$-infinity unital associative DG-operad}   $\mathtt{u}_{\infty}\mathtt{uA}^{\operatorname{Ch}(\Bbbk)}$ in the sense of Definition \ref{lol} %is defined as the following push-out in $\operad{\operatorname{Ch}(\Bbbk)}$,
%$$\xymatrix{\mathtt{Ass}^{\operatorname{Ch}(\Bbbk)}\ar@{ >->}[r]^{\bar\phi^{\operatorname{Ch}(\Bbbk)}_{\infty}}\ar[d]_{\phi^{\operatorname{Ch}(\Bbbk)}}\ar@{}[rd]|{\text{push}}& 
%\mathtt{u}_{\infty}\mathtt{A}^{\operatorname{Ch}(\Bbbk)}\ar[d]^{\psi}\\
%\mathtt{uAss}^{\operatorname{Ch}(\Bbbk)}\ar@{ >->}[r]_{\varphi}&\mathtt{u}_{\infty}\mathtt{uA}^{\operatorname{Ch}(\Bbbk)}}$$
%This operad 
admits a graded operad presentation extending the presentation of $\mathtt{u}_{\infty}\mathtt{A}^{\operatorname{Ch}(\Bbbk)}$ above along $\psi^{\operatorname{Ch}(\Bbbk)}$ with one more generator,
$$u\in\mathtt{u}_{\infty}\mathtt{uA}^{\operatorname{Ch}(\Bbbk)}(0)_0,$$
and two more relations,
$$\mu\circ_1u=\id{}=\mu\circ_2u.$$
The differential is defined as in $\mathtt{u}_{\infty}\mathtt{A}^{\operatorname{Ch}(\Bbbk)}$ together with
$$d(u)=0.$$
The morphism $\varphi^{\operatorname{Ch}(\Bbbk)}$ is the inclusion given by $\varphi^{\operatorname{Ch}(\Bbbk)}(\mu)=\mu$ and $\varphi^{\operatorname{Ch}(\Bbbk)}(u)=u$.
\end{rem}

%Comparing with the case of groupoids, we here define $\mathtt{u}_{\infty}\mathtt{uA}^{\operatorname{Ch}(\Bbbk)}$ directly from the DG-analogue of the property established in Lemma \ref{uuu}. Here, the difficult part is to check that $\varphi$ is a quasi-isomorphism.

We now aim at proving that the inclusion  $\varphi^{\operatorname{Ch}(\Bbbk)}\colon \mathtt{uAss}^{\operatorname{Ch}(\Bbbk)}\subset \mathtt{u}_{\infty}\mathtt{uA}^{\operatorname{Ch}(\Bbbk)}$ is a weak equivalence. With this purpose, we define a countable filtration of $\mathtt{u}_{\infty}\mathtt{uA}^{\operatorname{Ch}(\Bbbk)}$ starting with $\mathtt{uAss}^{\operatorname{Ch}(\Bbbk)}$.

\begin{defn}
For $m\geq 0$, let $\mathtt{u}_{m}\mathtt{uA}^{\operatorname{Ch}(\Bbbk)}\subset \mathtt{u}_{\infty}\mathtt{uA}^{\operatorname{Ch}(\Bbbk)}$  be the sub-DG-operad spanned by $\mu$, $u$, and the $\nu_n^S$ with $|S|\leq m$. In particular $\mathtt{u}_{0}\mathtt{uA}^{\operatorname{Ch}(\Bbbk)}=
\mathtt{uAss}^{\operatorname{Ch}(\Bbbk)}$ and 
$$\mathtt{u}_{\infty}\mathtt{uA}^{\operatorname{Ch}(\Bbbk)}=\bigcup_{m\geq0} \mathtt{u}_{m}\mathtt{uA}^{\operatorname{Ch}(\Bbbk)}.$$
\end{defn}

\begin{rem}\label{itis}
The inclusion $\mathtt{u}_{m-1}\mathtt{uA}^{\operatorname{Ch}(\Bbbk)}\subset \mathtt{u}_{m}\mathtt{uA}^{\operatorname{Ch}(\Bbbk)}$, $m> 0$, is a relative $\mathcal F_{\operatorname{Ch}(\Bbbk)}(I_{\mathbb N})$-cell complex by Proposition \ref{estructura}. Indeed, as graded operads $$\mathtt{u}_{m}\mathtt{uA}^{\operatorname{Ch}(\Bbbk)}=\mathtt{u}_{m-1}\mathtt{uA}^{\operatorname{Ch}(\Bbbk)}\amalg
\mathcal F(\Bbbk\cdot\{\nu_{n}^{S}\,;\,|S|=m\}_{n\geq m}),$$
and the differential satisfies
$$d(\{\nu_{n}^{S}\,;\,|S|=m\}_{l+1\geq n\geq m})\subset \mathtt{u}_{m-1}\mathtt{uA}^{\operatorname{Ch}(\Bbbk)}\amalg
\mathcal F(\Bbbk\cdot\{\nu_{n}^{S}\,;\,|S|=m\}_{l\geq n\geq m})$$
for any $l\geq m-1$. In particular $\bar\phi_\infty^{\operatorname{Ch}(\Bbbk)}$ is a relative $\mathcal F_{\operatorname{Ch}(\Bbbk)}(I_{\mathbb N})$-cell complex.
%The basis of the free part consists of the generators $\nu_n^S$ with $|S|=m$.
\end{rem}

%We now show that each step of this filtration is a weak equivalence (hence a trivial cofibration) fitting into a strong deformation retraction in the sense of Definition \ref{sdr}, see Lemma \ref{casi}.

We now show that each step of this filtration is a weak equivalence. 

In the proof of the following lemma we use modules over a graded operad $\mathcal O$, as introduced in \cite[Definition 2.13]{cmmc} under the name of linear modules. We use the alternative description in \cite[Definition 1.4]{mfo}. An \emph{$\mathcal O$-module} is a sequence of graded modules $M$ equipped with composition laws, $1\leq i\leq p$, $q\geq 0$,
$$\circ_i\colon \mathcal{O}(p)\otimes M(q)\To M(p+q-1),
\qquad 
\circ_i\colon M(p)\otimes \mathcal O(q)\To M(p+q-1),$$
satisfying equations $(1')$, $(2)$, $(3)$ and $(4)$ in Remark \ref{examples} when allowing one of the variables $a$, $b$ or $c$ to be in $M$. They form a graded abelian category. The graded operad $\mathcal O$ is itself an $\mathcal O$-module and any graded operad morphism $\mathcal O\r\mathcal P$ endows $\mathcal P$ with an $\mathcal O$-module  structure. A similar notion exists for prop(erad)s \cite[\S3.1]{dtrpI}, called infinitesimal bimodule. Bimodule is probably a more appropriate name for this structure, but we wish to follow \cite{cmmc,mfo}.

If $\mathcal P=\mathcal O\amalg\mathcal F(V)$ and $\mathcal O\r\mathcal P$ is the inclusion of the first factor, the sub-$\mathcal O$-module of $\mathcal P$ generated by the operad identity $\id{}$ is $\mathcal O$. We denote by $\mathcal P_1\subset\mathcal P$ the  sub-$\mathcal O$-module generated by the sequence of graded modules $V$.  
%It is $\Bbbk$-linearly generated by the operadic monomials which contain exactly one factor in $V$. 
In the coproduct decomposition of the sequequence of graded modules underlying $\mathcal P$, see Remark \ref{binary}, $\mathcal P_1$ corresponds to the factors indexed by trees $T$ with exactly one inner vertex of even level (necessarily $2$). This description shows that $\mathcal P_1$ is freely generated by $V$ as an $\mathcal O$-module, compare the example after \cite[Proposition 18]{dtrpII}. The coproduct of $\mathcal O$-modules $\mathcal O\amalg\mathcal P_1\subset\mathcal P$ is called the \emph{linear part} of $\mathcal P$.

If $\mathcal P'=\mathcal O\amalg\mathcal F(V')$ and $\Phi\colon \mathcal P\r\mathcal P'$ is a morphism of graded operads  under $\mathcal O$ satisfying $\Phi(V)\subset \mathcal O\amalg\mathcal P_1'$ we say that $\Phi$ is \emph{linear}. In this case $\Phi$ (co)restricts to a degree $0$ morphism of $\mathcal O$-modules $\Phi\colon \mathcal O\amalg\mathcal P_1\r\mathcal O\amalg\mathcal P_1'$ which (co)restricts to the identity between the first factors.

Suppose now that $\mathcal P$ above is equipped with a DG-operad structure. We say that the differential of $\mathcal P$ is \emph{linear} if $d(\mathcal O)=0$ and $d(V)\subset\mathcal O\amalg\mathcal P_1$. In this case $d$ (co)restricts to a degree $-1$ morphism of $\mathcal O$-modules $d\colon\mathcal O\amalg\mathcal P_1\r \mathcal O\amalg\mathcal P_1$ which vanishes on the first factor.

\begin{lem}\label{gordo}
The inclusion $\mathtt{u}_{m-1}\mathtt{uA}^{\operatorname{Ch}(\Bbbk)}\subset \mathtt{u}_{m}\mathtt{uA}^{\operatorname{Ch}(\Bbbk)}$ is a weak equivalence for any $m> 0$.
\end{lem}

%Notice that in the first summand of $h(\mu^S_{n,2})$ for $|S|=m$, $p> \min S$ and $q> \max S  -\min S$, and in the second summand $q\leq \min S$.

\begin{proof}
%The inclusion is a cofibration by the Remark \ref{itis}, so we just have to show that it is a weak equivalence. 
We proceed by induction on $m>0$. Assume that the previous inclusions $$\mathtt{uAss}^{\operatorname{Ch}(\Bbbk)}=\mathtt{u}_{0}\mathtt{uA}^{\operatorname{Ch}(\Bbbk)}\subset\cdots\subset \mathtt{u}_{m-1}\mathtt{uA}^{\operatorname{Ch}(\Bbbk)}$$ are weak equivalences. There is a retraction 
$r\colon \mathtt{u}_{m-1}\mathtt{uA}^{\operatorname{Ch}(\Bbbk)}\r \mathtt{uAss}^{\operatorname{Ch}(\Bbbk)}$ of DG-operads defined for $m\geq 2$ by
\begin{align*}
r(\nu_1^{\{1\}})&=u,&
r(\nu_n^{S})&=0,\quad (n,|S|)\neq (1,1).
\end{align*}
These formulas clearly define a retraction of graded operads, hence we only have to check compatibility with differentials. It is easy to check compatibility for $\nu_1^{\{1\}}$, $\nu_2^{\{1\}}$ and $\nu_2^{\{2\}}$. For the rest of $\nu_n^S$,
compatibility is the same as $rd(\nu_n^{S})=0$. This equation follows  since 
%$\nu_n^S\neq \nu_1^{\{1\}}, \nu_2^{\{1\}},\nu_2^{\{2\}}$, therefore 
all summads in $d(\nu_n^{S})$ contain a $\nu_{n'}^{S'}\neq \nu_1^{\{1\}}$, and $r(\nu_{n'}^{S'})=0$.

By the 2-out-of-3 axiom, $r$ is a weak equivalence. Consider the following push-out
$$\xymatrix{
\mathtt{u}_{m-1}\mathtt{uA}^{\operatorname{Ch}(\Bbbk)}\ar[d]^\sim_{r}\ar@{}[rd]|{\text{push}}\ar@{>->}[r]&
\mathtt{u}_{m}\mathtt{uA}^{\operatorname{Ch}(\Bbbk)}\ar[d]^{\bar r}_\sim\\
\mathtt{uAss}^{\operatorname{Ch}(\Bbbk)}\ar@{>->}[r]&
\mathcal{P}
}$$
Here $\bar r$ is a weak equivalence by \cite[Corollary C.2 and Theorem C.7]{htnso2}. If we show that the lower cofibration is a weak equivalence then the upper one will also be a weak equivalence by he 2-out-of-3 axiom.

Let $\mathcal Q$ be the DG-operad
$$\mathcal Q= \mathtt{uAss}^{\operatorname{Ch}(\Bbbk)}\amalg\coprod_{n,S}\mathcal F_{\operatorname{Ch}(\Bbbk)}(D^{n+m-1}[n-m]).$$
Here $n\geq 1$ and $S$ runs over the subsets $S\subset\{1,\dots,m\}$ with $m$ elements. 
The inclusion of the first factor $\mathtt{uAss}^{\operatorname{Ch}(\Bbbk)}\st{\sim}\into \mathcal Q$ is a relative $\mathcal F_{\operatorname{Ch}(\Bbbk)}(J_{\mathbb N})$-cell complex, in particular a trivial cofibration. We are going to construct morphisms of DG-operads $\Phi$ and $\Psi$ under $ \mathtt{uAss}^{\operatorname{Ch}(\Bbbk)}$,
$$\xymatrix@R=10pt{
&\mathcal P%\ar@<.5ex>[dd]^\Psi
\ar%@<-.5ex>
@{<-}[dd]_\Phi\\
\mathtt{uAss}^{\operatorname{Ch}(\Bbbk)}\ar@{>->}[ru]\ar@{>->}[rd]_\sim&\\
&\mathcal Q
}\qquad\qquad\qquad
\xymatrix@R=10pt{
&\mathcal P\ar%@<.5ex>
[dd]^\Psi
%\ar@<-.5ex>@{<-}[dd]_\Phi
\\
\mathtt{uAss}^{\operatorname{Ch}(\Bbbk)}\ar@{>->}[ru]\ar@{>->}[rd]_\sim&\\
&\mathcal Q
}$$
such that $\Phi\Psi=1_{\mathcal P}$. Assume we have done it. The second commutative triangle shows that $\Psi$ must be surjective in homology. Moreover, $\Psi$ is also injective in homology since $\Phi\Psi=1_{\mathcal P}$. Therefore $\Psi$ is a weak equivalence, and also $\mathtt{uAss}^{\operatorname{Ch}(\Bbbk)}\into \mathcal P$ by the 2-out-of-3 property applied to the second triangle. This will finish the proof.

The DG-operad $\mathcal{P}$ is presented as a graded operad by the generators
\begin{align*}
u&\in \mathcal{P}(0)_{0}, &
\mu&\in \mathcal{P}(2)_{0}, &
\nu_n^{S}&\in \mathcal{P}(n-m)_{n-2+m},
\end{align*}
where $n>0$ and $S=\{l_1,\dots, l_m\}\subset\{1,\dots,n\}$ is a subset with $m$ elements,
and relations $\mu\circ_1\mu=\mu\circ_2\mu$ and $\mu\circ_1u=\id{}=\mu\circ_2u$. 
The differential is given by 
\begin{align*}
d(u)&=0,&
d(\mu)&=0;
\end{align*}
if $(n,m)\neq (2,1), (1,1)$,
\begin{align*}
d(\nu_n^S)={}&(-1)^{n}\mu\circ_1\nu_{n-1}^{S}\quad\text{unless }l_m=n\\
&+\mu\circ_2\nu_{n-1}^{S-1}\quad\text{unless }l_1=1\\
&+\hspace{-20pt}\sum_{
\begin{array}{c}\\[-17pt]
\scriptstyle 1\leq v\leq m+1\\[-5pt]
\scriptstyle l_{v-1}< i+v-1< l_{v}-1
\end{array}
}
\hspace{-20pt}
(-1)^{i+v-1}\nu_{n-1}^{S_{v}\cup(S_{v}'-1)}\circ_i\mu;
\end{align*}
and if $m=1$ also
\begin{align*}
d(\nu_1^{\{1\}})&=0,&
d(\nu_2^{\{1\}})&=\mu\circ_1\nu_1^{\{1\}}-\id{},&
d(\nu_2^{\{2\}})&=\mu\circ_2\nu_1^{\{1\}}-\id{}.%\quad j\in\{1,2\},
\end{align*}
Notice that $\mathcal P$, whose underlying graded operad is $\mathtt{uAss}^{\operatorname{Ch}(\Bbbk)}\amalg\mathcal F(\Bbbk\cdot\{\nu_n^S\}_{n,S})$, has a linear differential regarded as an operad under $\mathtt{uAss}^{\operatorname{Ch}(\Bbbk)}$. 
%The morphism $\bar r$ is given by $\bar r(\nu_{n}^{S})=\nu_{n}^{S}$ if $|S|=m$ and $\bar r(\nu_{n}^{S})=0$ otherwise. 

As a graded operad $\mathcal Q=\mathtt{uAss}^{\operatorname{Ch}(\Bbbk)}\amalg\mathcal F(\Bbbk\cdot\{\sigma_n^S,d(\sigma_n^S)\}_{n,S})$. Here $\sigma_n^S$ is the top generator of the copy of $D^{n+m-1}$ indexed by $n$ and $S$, so it has degree $n+m-1$ and arity $n-m$. The differential of $\mathcal Q$ is linear too. 

The morphism of DG-poperads $\Phi$ under $\mathtt{uAss}^{\operatorname{Ch}(\Bbbk)}$ is defined by 
\begin{align*}
\Phi(\sigma_n^S)={}&(-1)^{l_1+1}\nu_{n+1}^{S+1}\circ_{l_1}u.
\end{align*}
Notice that $\Phi$ is linear.

The definition of $\Psi$ is more complicated. Let $h\colon\mathtt{uAss}^{\operatorname{Ch}(\Bbbk)}\amalg\mathcal P_1\r\mathtt{uAss}^{\operatorname{Ch}(\Bbbk)}\amalg \mathcal Q_1$ be the degree $+1$ morphism  of $\mathtt{uAss}^{\operatorname{Ch}(\Bbbk)}$-modules defined by
\begin{align*}
h(\nu_n^S)&=\sigma_n^S,&
h(\id{})&=0.
\end{align*}
Moreover, let $p\colon\mathtt{uAss}^{\operatorname{Ch}(\Bbbk)}\amalg \mathcal P_1\r\mathtt{uAss}^{\operatorname{Ch}(\Bbbk)}\amalg \mathcal Q_1$ be the degree $0$ morphism given~by 
\begin{align*}
p(\id{})&=\id{},\\
p(\nu_n^S)&=0\quad\text{if }n>1,\\
p(\nu_1^{\{1\}})&=u\quad\text{if }m=1.
\end{align*}
We define $\Psi$ as a morphism of graded operads under $\mathtt{uAss}^{\operatorname{Ch}(\Bbbk)}$ by
\begin{align*}
\Psi(\nu_n^S)=dh(\nu_n^S)+hd(\nu_n^S)+p(\nu_n^S)={}&d(\sigma_n^S)+hd(\nu_n^S)\\
&+u\text{ if }(n,m)=(1,1).
\end{align*}
Let us check that $\Psi$ is compatible with differentials. Notice that $\Psi$ is linear by definition and the formula 
$$\Psi=dh+hd+p$$
holds on the $\mathtt{uAss}^{\operatorname{Ch}(\Bbbk)}$-module generators $\id{}$ and $\nu_n^S$ of $\mathtt{uAss}^{\operatorname{Ch}(\Bbbk)}\amalg\mathcal P_1$, hence it holds
after (co)restricting to linear parts, i.e.~when we evaluate each side at an element in $\mathtt{uAss}^{\operatorname{Ch}(\Bbbk)}\amalg\mathcal P_1$ we obtain an equality in $\mathtt{uAss}^{\operatorname{Ch}(\Bbbk)}\amalg\mathcal Q_1$. Then,
\begin{align*}
\Psi d(\nu_n^S)=dhd(\nu_n^{S})+hd^2(\nu_n^S)+pd(\nu_n^S)={}&dhd(\nu_n^{S}),\\
d\Psi(\nu_n^S)=d^2h(\nu_n^S)+dhd(\nu_n^S)+dp(\nu_n^S)={}&dhd(\nu_n^S).
\end{align*}
Here we use that $pd(\nu_n^S)=0=dp(\nu_n^S)$ for any $\nu_n^S$. Indeed, $p(\nu_n^S)=0$ if $(n,m)\neq (1,1)$ and $dp(\nu_1^{\{1\}})=d(u)=0$, so $dp(\nu_n^S)=0$ in any case. Moreover, as pointed out above, if $\nu_n^S\neq \nu_1^{\{1\}}, \nu_2^{\{1\}},\nu_2^{\{2\}}$ each summand in $d(\nu_n^S)$ contains a certain $\nu_{n'}^{S'}\neq \nu_1^{\{1\}}$, so $p(\nu_{n'}^{S'})=0$ and $pd(\nu_S)=0$. Furthermore,
\begin{align*}
pd(\nu_1^{\{1\}})&=p(0)=0,\\
pd(\nu_2^{\{1\}})&=p(\mu\circ_1\nu_1^{\{1\}}-\id{})=\mu\circ_1p(\nu_1^{\{1\}})-\id{}=\mu\circ_1u-\id{}=\id{}-\id{}=0,
\end{align*}
and similarly $pd(\nu_2^{\{2\}})=0$.

We finally prove that $\Phi\Psi=1_{\mathcal P}$. It is enough to check the equation on generators $\nu_n^S$ since $\Phi$ and $\Psi$ are morphisms of DG-operads under $\mathtt{uAss}^{\operatorname{Ch}(\Bbbk)}$. We start with the general case $(n,m)\neq (1,1), (2,1)$:
\begin{align*}
\Phi\Psi(\nu_n^S)={}&\Phi(d(\sigma_n^S)+hd(\nu_n^S))\\
={}&d\Phi(\sigma_n^S)+\Phi hd(\nu_n^S)\\
={}&(-1)^{l_1+1}d(\nu_{n+1}^{S+1}\circ_{l_1}u)\\
&+\overbrace{(-1)^{n+l_{1}+1}\mu\circ_1(\nu_{n}^{S+1}\circ_{l_{1}}u)}^{(\text{a})\text{ unless }l_m=n}+
\overbrace{(-1)^{l_{1}}\mu\circ_2(\nu_{n}^{S}\circ_{l_{1}-1}u)}^{(\text{b})\text{ unless }l_1=1}\\
&+\sum_{0< i<l_{1}-1}
\overbrace{(-1)^{i+l_{1}}(\nu_{n}^{S}\circ_{l_{1}-1}u)\circ_i\mu}^{(\text{c}_{i})}\\
&+\hspace{-20pt}\sum_{
\begin{array}{c}\\[-17pt]
\scriptstyle 1< v\leq m+1\\[-5pt]
\scriptstyle l_{v-1}< i+v-1< l_{v}-1
\end{array}
}\hspace{-20pt}\overbrace{(-1)^{i+v+l_{1}}(\nu_{n}^{(S_{v}+1)\cup S'_{v}}\circ_{l_{1}}u)\circ_i\mu}^{(\text{d}_{i})},\\
(-1)^{l_1+1}d(\nu_{n+1}^{S+1}\circ_{l_1}u)={}&\overbrace{(-1)^{n+l_{1}}
(\mu\circ_1\nu_{n}^{S+1})\circ_{l_{1}}u}^{-\text{(a)}\text{ unless }l_m=n}+
\overbrace{(-1)^{l_{1}+1}
(\mu\circ_2\nu_{n}^{S})\circ_{l_{1}}u}^{(\text{b}')}\\
&+\sum_{
0< i< l_{1}
}\overbrace{(-1)^{i+l_{1}+1}(\nu_{n}^{S}\circ_{i}\mu)\circ_{l_{1}}u}^{-(\text{c}_{i})}
\\
&+\hspace{-20pt}\sum_{
\begin{array}{c}\\[-17pt]
\scriptstyle 1< v\leq m+1\\[-5pt]
\scriptstyle l_{v-1}+1< i+v-1< l_{v}
\end{array}
}\hspace{-20pt}\overbrace{(-1)^{i+v+l_{1}}(\nu_{n}^{(S_{v}+1)\cup S'_{v}}\circ_i\mu)\circ_{l_{1}}u}^{(\text{d}'_{i})}.
\end{align*}
The summands $(\text{a})$  and $-(\text{a})$  either do not occur (if $l_m=n$) or cancel. Moreover, 
$(\text{d}_{i})=-(\text{d}_{i+1}')$, so all the $(\text{d}_{i})$ and the $(\text{d}'_{i})$ cancel. 
If $l_{1}=1$, there are no $(\text{c}_{i})$, 
$(\text{b})$ does not occur, and 
$$(\text{b}')=(\mu\circ_2\nu_{n}^{S})\circ_{1}u
=(\mu\circ_{1}u)\circ_1\nu_{n}^{S}=\id{}\circ_1\nu_{n}^{S}=\nu_{n}^{S}.$$
If $l_{1}>1$ then $(\text{b})=-(\text{b}')$ and all the $(\text{c}_{i})$ cancel 
except for the last one: %, $i=l_1-1$, 
$$-(\text{c}_{l_{1}-1})=(\nu_{n}^{S}\circ_{l_{1}-1}\mu)\circ_{l_{1}}u
=\nu_{n}^{S}\circ_{l_{1}-1}(\mu\circ_{2}u)
=\nu_{n}^{S}\circ_{l_{1}-1}\id{}=\nu_{n}^{S}.$$
Therefore $\Phi\Psi(\nu_n^S)=\nu_n^S$.

Let us finally check the special cases $(n,m)=(1,1),(2,1)$:
\begin{align*}
\Phi\Psi(\nu_1^{\{1\}})={}&\Phi(d(\sigma_1^{\{1\}})+hd(\nu_1^{\{1\}})+u)\\
={}&d\Phi(\sigma_1^{\{1\}})+u\\
={}&d(\nu_2^{\{2\}}\circ_1u)+u\\
={}&(\mu\circ_2\nu_1^{\{1\}}-\id{})\circ_1u+u\\
={}&(\mu\circ_1u)\circ_1\nu_1^{\{1\}}-u+u\\
={}&\id{}\circ_1\nu_1^{\{1\}}\\
={}&\nu_1^{\{1\}},\\
\Phi\Psi(\nu_2^{\{1\}})={}&\Phi(d(\sigma_2^{\{1\}})+hd(\nu_2^{\{1\}}))\\
={}&d\Phi(\sigma_2^{\{1\}})+\Phi hd(\nu_2^{\{1\}})\\
={}&d(\nu_3^{\{2\}}\circ_1u)+\Phi h(\mu\circ_1\nu_1^{\{1\}}-\id{})\\
={}&(-\mu\circ_1\nu_2^{\{2\}}+\mu\circ_2\nu_2^{\{1\}})\circ_1u+\mu\circ_1(\nu_2^{\{2\}}\circ_1u)\\
={}&(\mu\circ_1 u)\circ_1\nu_2^{\{1\}}\\
={}&\nu_2^{\{1\}},\\
\Phi\Psi(\nu_2^{\{2\}})={}&\Phi(d(\sigma_2^{\{2\}})+hd(\nu_2^{\{2\}}))\\
={}&d\Phi(\sigma_2^{\{2\}})+\Phi hd(\nu_2^{\{2\}})\\
={}&-d(\nu_3^{\{3\}}\circ_2u)+\Phi h(\mu\circ_2\nu_1^{\{1\}}-\id{})\\
={}&-(\mu\circ_2\nu_2^{\{2\}}-\nu_2^{\{2\}}\circ_1\mu)\circ_2u+\mu\circ_2(\nu_2^{\{2\}}\circ_1u)\\
={}&\nu_2^{\{2\}}\circ_1(\mu\circ_2u)\\
={}&\nu_2^{\{2\}}.
\end{align*}
\end{proof}

\begin{cor}\label{ultimo}
The morphism $\varphi^{\operatorname{Ch}(\Bbbk)}$ in Remark \ref{uuu2} is a trivial cofibration.
\end{cor}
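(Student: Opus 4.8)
The plan is to deduce Corollary \ref{ultimo} directly from the filtration $\mathtt{uAss}^{\operatorname{Ch}(\Bbbk)}=\mathtt{u}_0\mathtt{uA}^{\operatorname{Ch}(\Bbbk)}\subset \mathtt{u}_1\mathtt{uA}^{\operatorname{Ch}(\Bbbk)}\subset\cdots$ and Lemma \ref{gordo}. First I would recall that $\varphi^{\operatorname{Ch}(\Bbbk)}$ factors as the countable composite of the inclusions $l_m\colon\mathtt{u}_{m-1}\mathtt{uA}^{\operatorname{Ch}(\Bbbk)}\into\mathtt{u}_{m}\mathtt{uA}^{\operatorname{Ch}(\Bbbk)}$, $m\geq1$, since $\mathtt{u}_{\infty}\mathtt{uA}^{\operatorname{Ch}(\Bbbk)}=\bigcup_{m\geq 0}\mathtt{u}_{m}\mathtt{uA}^{\operatorname{Ch}(\Bbbk)}$. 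Each $l_m$ is a relative $\mathcal F_{\operatorname{Ch}(\Bbbk)}(I_{\mathbb N})$-cell complex (as noted in the Remark preceding Lemma \ref{gordo}), hence a cofibration; and by Lemma \ref{gordo} each $l_m$ sits inside a strong deformation retraction, so by Lemma \ref{casi} each $l_m$ is a trivial cofibration.

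Then I would argue that the transfinite (here just countable) composite of trivial cofibrations is a trivial cofibration. Trivial cofibrations in any cofibrantly generated model category are closed under transfinite composition: they are precisely the retracts of relative $\mathcal F_{\operatorname{Ch}(\Bbbk)}(J_{\mathbb N})$-cell complexes (cf. Theorem \ref{modelo} and the standard small object argument machinery), and relative cell complexes are visibly closed under composition of chains. Concretely, $\varphi^{\operatorname{Ch}(\Bbbk)}$ is the colimit of the sequence $\mathtt{u}_0\mathtt{uA}\To\mathtt{u}_1\mathtt{uA}\To\cdots$ in which every map is a trivial cofibration, so $\varphi^{\operatorname{Ch}(\Bbbk)}$ is a trivial cofibration as well. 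Alternatively, and perhaps more transparently, one can check directly that $\varphi^{\operatorname{Ch}(\Bbbk)}$ is a cofibration (it is a relative $\mathcal F_{\operatorname{Ch}(\Bbbk)}(I_{\mathbb N})$-cell complex by Corollary \ref{elementary2}, with the $\nu_n^S$ as the basis of the free part and the filtration $S=\cup_m S_m$ by $|S|=m$) and separately that it is a weak equivalence; for the latter it suffices that each $\mathtt{u}_{m-1}\mathtt{uA}^{\operatorname{Ch}(\Bbbk)}(k)\r\mathtt{u}_{m}\mathtt{uA}^{\operatorname{Ch}(\Bbbk)}(k)$ is a quasi-isomorphism in every arity $k\geq 0$ by Lemma \ref{casi}, and a sequential colimit of quasi-isomorphisms of chain complexes is a quasi-isomorphism (filtered colimits are exact in $\operatorname{Ch}(\Bbbk)$).

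The only mild subtlety — which I expect to be the sole real content of the proof beyond quoting Lemma \ref{gordo} — is the passage to the countable colimit: one must observe that in each fixed arity $k$, the complex $\mathtt{u}_{\infty}\mathtt{uA}^{\operatorname{Ch}(\Bbbk)}(k)$ is the union of the subcomplexes $\mathtt{u}_{m}\mathtt{uA}^{\operatorname{Ch}(\Bbbk)}(k)$, and since homology commutes with filtered colimits, $H_*(\varphi^{\operatorname{Ch}(\Bbbk)}(k))=\colim_m H_*(l_m(k)\cdots l_1(k))$ is an isomorphism because each stage is. This makes $\varphi^{\operatorname{Ch}(\Bbbk)}$ a weak equivalence, and combined with its being a cofibration, we conclude it is a trivial cofibration, which is exactly the statement. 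No genuinely hard step remains; the work has been front-loaded into Lemma \ref{gordo} and the homotopy bookkeeping of Section \ref{cylinder}.

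\begin{proof}
By the Remark preceding Lemma \ref{gordo}, each inclusion $l_m\colon\mathtt{u}_{m-1}\mathtt{uA}^{\operatorname{Ch}(\Bbbk)}\into\mathtt{u}_{m}\mathtt{uA}^{\operatorname{Ch}(\Bbbk)}$, $m\geq 1$, is a relative $\mathcal F_{\operatorname{Ch}(\Bbbk)}(I_{\mathbb N})$-cell complex, in particular a cofibration. By Lemma \ref{gordo} each $l_m$ is part of a strong deformation retraction, hence a weak equivalence by Lemma \ref{casi}. Thus every $l_m$ is a trivial cofibration.

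Now $\varphi^{\operatorname{Ch}(\Bbbk)}$ is the transfinite (countable) composite of the $l_m$, since
$$\mathtt{u}_{\infty}\mathtt{uA}^{\operatorname{Ch}(\Bbbk)}=\bigcup_{m\geq0}\mathtt{u}_{m}\mathtt{uA}^{\operatorname{Ch}(\Bbbk)}$$
and this union is a sequential colimit in $\operad{\operatorname{Ch}(\Bbbk)}$, computed arity-wise as the corresponding colimit of chain complexes. The composite $\varphi^{\operatorname{Ch}(\Bbbk)}=\cdots\circ l_2\circ l_1$ of relative $\mathcal F_{\operatorname{Ch}(\Bbbk)}(I_{\mathbb N})$-cell complexes is again a relative $\mathcal F_{\operatorname{Ch}(\Bbbk)}(I_{\mathbb N})$-cell complex (equivalently, see Corollary \ref{elementary2}: the free part of $\mathtt{u}_{\infty}\mathtt{uA}^{\operatorname{Ch}(\Bbbk)}$ over $\mathtt{uAss}^{\operatorname{Ch}(\Bbbk)}$ has basis the $\nu_n^S$, with continuous increasing filtration by $|S|\leq m$ and $d(S_{m+1})\subset\mathtt{u}_{m}\mathtt{uA}^{\operatorname{Ch}(\Bbbk)}$), so $\varphi^{\operatorname{Ch}(\Bbbk)}$ is a cofibration.

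It remains to see that $\varphi^{\operatorname{Ch}(\Bbbk)}$ is a weak equivalence. Fix an arity $k\geq 0$. Each $l_m(k)\colon\mathtt{u}_{m-1}\mathtt{uA}^{\operatorname{Ch}(\Bbbk)}(k)\r\mathtt{u}_{m}\mathtt{uA}^{\operatorname{Ch}(\Bbbk)}(k)$ is a quasi-isomorphism by Lemma \ref{casi}, and $\mathtt{u}_{\infty}\mathtt{uA}^{\operatorname{Ch}(\Bbbk)}(k)=\colim_m\mathtt{u}_{m}\mathtt{uA}^{\operatorname{Ch}(\Bbbk)}(k)$. Since homology commutes with filtered colimits of chain complexes, $H_*(\varphi^{\operatorname{Ch}(\Bbbk)}(k))$ is the colimit of the isomorphisms $H_*(l_m(k)\circ\cdots\circ l_1(k))$, hence an isomorphism. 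Therefore $\varphi^{\operatorname{Ch}(\Bbbk)}(k)$ is a quasi-isomorphism for all $k$, i.e.~$\varphi^{\operatorname{Ch}(\Bbbk)}$ is a weak equivalence in $\operad{\operatorname{Ch}(\Bbbk)}$ by Theorem \ref{modelo}.

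Being both a cofibration and a weak equivalence, $\varphi^{\operatorname{Ch}(\Bbbk)}$ is a trivial cofibration.
\end{proof}
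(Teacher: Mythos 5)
Your proposal is correct and follows essentially the same route as the paper: the paper's proof simply observes that by Lemma \ref{gordo} the morphism $\varphi^{\operatorname{Ch}(\Bbbk)}$ is a transfinite (countable) composition of trivial cofibrations, hence itself a trivial cofibration. Your extra verification (closure of trivial cofibrations under transfinite composition, plus the direct check that homology commutes with the filtered colimit in each arity) is just an unpacking of that standard fact and does not change the argument.
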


\begin{proof}
By Remark \ref{itis} and Lemma \ref{gordo}, $\varphi^{\operatorname{Ch}(\Bbbk)}$ is a transfinite (countable) composition of trivial cofibrations. Hence $\varphi^{\operatorname{Ch}(\Bbbk)}$ is itself a trivial cofibration.
\end{proof}

\begin{rem}\label{laim1}
There are friendly  characterizations of the image of the injective map
$$\pi_0(\phi^{\operatorname{Ch}(\Bbbk)})^*\colon\pi_0\rmap_{\operad{\operatorname{Ch}(\Bbbk)}}(\mathtt{uAss}^{\operatorname{Ch}(\Bbbk)}, \mathcal O)
\hookrightarrow
\pi_0\rmap_{\operad{\operatorname{Ch}(\Bbbk)}}( \mathtt{Ass}^{\operatorname{Ch}(\Bbbk)}, \mathcal O)$$
when $\mathcal O=\mathtt{End}_{\operatorname{Ch}(\Bbbk)}(X)$ is the endomorphism operad of a cofibrant complex $X$. Since all complexes are fibrant, these sets are the sets of homotopy classes of maps from the (unital) $A$-infinity DG-operad to $\mathtt{End}_{\operatorname{Ch}(\Bbbk)}(X)$, i.e.~homotopy classes of (unital) $A$-infinity structures on $X$. %We will use the (unital) $A$-infinity operad $(\mathtt{u})\mathtt{A}_\infty^{\operatorname{Ch}(\Bbbk)}$ given by the cellular homology of (unital) associahedra \cite{hahs,uass}. 

Recall from Remark \ref{elu} the description of the (unital) $A$-fininity DG-operad $(\mathtt{u})\mathtt{A}_\infty^{\operatorname{Ch}(\Bbbk)}$ and the morphism $\phi_\infty^{\operatorname{Ch}(\Bbbk)}\colon \mathtt{A}_\infty^{\operatorname{Ch}(\Bbbk)}\into\mathtt{uA}_\infty^{\operatorname{Ch}(\Bbbk)}$. An \emph{$A$-infinity structure} on $X$ is given by graded morphisms $m_n\colon X^{\otimes n}\r X$ of degree $n-2$ satisfying certain equations. A \emph{unital $A$-infinity structure} is similarly defined by graded morphisms $m_n^S\colon X^{\otimes n-|S|}\r X$ of degree $n-2+|S|$. The underlying $A$-infinity structure is given by the morphisms $m_n=m_n^\varnothing\colon X^{\otimes n}\r X$. A unital $A$-infinity structure  is \emph{strict} if $m_n^S=0$ for $S\neq \varnothing$ and $n>1$. 

Following the terminology in \cite[Definition 5.2.3]{lurieha}, we say that an $A$-infinity structure is \emph{quasi-unital} if there exists a cycle $v\in X_0$, $d(v)=0$, such that the chain maps $m_2(v,-), m_2(-,v)\colon X\r X$ are chain homotopic to the identity. The underlying $A$-infinity structure of a unital $A$-infinity structure is quasi-unital. We can take $v=m_{1}^{\{1\}}\in X_{0}$ and the chain homotopies $m_{2}^{\{1\}},m_{2}^{\{2\}}\colon X\r X$.

If $\Bbbk$ is a field we can suppose that $X$ has trivial differential. Then quasi-unital means that $m_2$ has a unit. It is well known that such a quasi-unital $A$-infinity structure is quasi-isomorphic, in the $A$-infinity sense, to a strictly unital $A$-infinity structure, via a quasi-isomorphism whose linear term is the identity \cite[\S3.2.1]{slaic}. One can check that two $A$-infinity structures which are quasi-isomorphic in this way represent the same element in $\pi_0\rmap_{\operad{\operatorname{Ch}(\Bbbk)}}( \mathtt{Ass}^{\operatorname{Ch}(\Bbbk)}, \mathtt{End}_{\operatorname{Ch}(\Bbbk)}(X))$. Therefore, the image of $\pi_0(\phi^{\operatorname{Ch}(\Bbbk)})^*$ is formed exactly by the homotopy classes of quasi-unital
$A$-infinity structures. A deeper result of Lyubashenko and Manzyuk \cite[Theorem 3.7]{uainfcat}  shows that this statement is actually true over any commutative ring $\Bbbk$.

We do not think that a similar result holds for any target operad $\mathcal O$. Our opinion is based in the following facts.

The generalization of a quasi-unital $A$-infinity structure can be straightforwadly defined as follows. A morphism $\xi\colon \mathtt{A}_\infty^{\operatorname{Ch}(\Bbbk)}\r\mathcal O$ is \emph{quasi-unital} if there exists a cycle $v\in\mathcal O(0)_0$, $d(v)=0$, such that the following three  homology classes coincide
$$[\id{\mathcal{O}}]=[\xi(\mu_2)\circ_1v]=[\xi(\mu_2)\circ_2v]\in H_0(\mathcal{O}(1)).$$
%the chain maps $\xi(\mu_2)(u,-),\xi(\mu_2)(-,u)\colon\mathcal O(n)\r \mathcal O(n)$ are chain homotopic to the identity, $n\geq 0$. 
If $\xi$ extends to $\mathtt{uA}_\infty^{\operatorname{Ch}(\Bbbk)}$ by a morphism $\bar\xi\colon \mathtt{uA}_\infty^{\operatorname{Ch}(\Bbbk)}\r\mathcal O$  then $\xi$ is quasi-unital. We can take $v=\bar \xi(\mu_1^{\{1\}})$ since for $j=1,2$,
\begin{align*}
d(\bar\xi(\mu_2^{\{j\}}))=\bar\xi(d(\mu_2^{\{j\}}))&
=\bar\xi(\mu_2^\varnothing\circ_j\mu_1^{\{1\}}-\id{\mathtt{uA}_\infty^{\operatorname{Ch}(\Bbbk)}})
=\xi(\mu_2)\circ_j\bar\xi(\mu_1^{\{1\}})-\id{\mathcal O}.
\end{align*}
%and the homotopies given by $\bar \xi(\mu_2^{\{1\}})\circ_1-$ and $\bar \xi(\mu_2^{\{2\}})\circ_1-$.
Actually, it is enough that $\xi$ extends to the suboperad $\mathcal P\subset  \mathtt{uA}_\infty^{\operatorname{Ch}(\Bbbk)}$ spanned by $\mu_{n}^{\varnothing}$, $n\geq 2$, $\mu_1^{\{1\}}$, $\mu_2^{\{1\}}$, and $\mu_2^{\{2\}}$. In particular, the inclusion $\mathtt{A}_\infty^{\operatorname{Ch}(\Bbbk)}\subset\mathcal P$ is quasi-unital.

%Any operad $\mathcal P$ fitting in $\mathtt{A}_\infty^{\operatorname{Ch}(\Bbbk)}\subset\mathcal P\subset  \mathtt{uA}_\infty^{\operatorname{Ch}(\Bbbk)}$ containing $\mu_1^{\{1\}}$, $\mu_2^{\{1\}}$ and $\mu_2^{\{2\}}$ is quasi-unital. 

The image of the injective map $\pi_0(\phi^{\operatorname{Ch}(\Bbbk)})^*$ consists of homotoy classes with a quasi-unital representative. 
Suppose that, conversely,  all homotoy classes with a quasi-unital representative where in the image for any target operad $\mathcal O$. Taking $\mathcal O=\mathcal P$  we would obtain a morphism $\bar\xi\colon \mathtt{uA}_\infty^{\operatorname{Ch}(\Bbbk)}\r\mathcal P$ whose restriction to $\mathtt{A}_\infty^{\operatorname{Ch}(\Bbbk)}$ would be homotopic to the inclusion. 
Below we show that the composition of $\bar\xi$ followed by the inclusion $\mathcal P\subset  \mathtt{uA}_\infty^{\operatorname{Ch}(\Bbbk)}$ would be a homotopy automorphism of $\mathtt{uA}_\infty^{\operatorname{Ch}(\Bbbk)}$, so $\mathtt{uA}_\infty^{\operatorname{Ch}(\Bbbk)}$ would be a homotopy retract of $\mathcal P$. We think this is very unlikely to happen since $\mathcal P$ seems too small in terms of both size and coherence  for the quasi-unit. Nevertheless, we have been unable to reach a contradiction. 
\end{rem}

\begin{prop}
Any endomorphism of $\mathtt{uA}_\infty^{\operatorname{Ch}(\Bbbk)}$  is a homotopy automorphism. 
\end{prop}

\begin{proof}
The monoid of homotopy classes of maps from $\mathtt{uA}_\infty^{\operatorname{Ch}(\Bbbk)}$ to itself 
coincides with the endomorphism monoid of $\mathtt{uAss}^{\operatorname{Ch}(\Bbbk)}$, since the homology operad of $\mathtt{uA}_\infty^{\operatorname{Ch}(\Bbbk)}$ is $\mathtt{uAss}^{\operatorname{Ch}(\Bbbk)}$, which is concentrated in degree $0$. Recall the presentation of $\mathtt{uAss}^{\operatorname{Ch}(\Bbbk)}$ in Example \ref{sets}. Any morphism $\varphi\colon\mathtt{uAss}^{\operatorname{Ch}(\Bbbk)}\r\mathtt{uAss}^{\operatorname{Ch}(\Bbbk)}$ is determined by the image of the generators, which by arity and degree reasons must be of the form
\begin{align*}
\varphi(\mu)&=\alpha\cdot\mu,&\varphi(u)&=\beta\cdot u,&\alpha,\beta&\in\Bbbk.
\end{align*}
The relation $\id{}=\mu\circ_{2}u$ implies
\begin{align*}
\id{}&=\varphi(\id{})\\
&=\varphi(\mu\circ_{2}u)\\
&=\varphi(\mu)\circ_{2}\varphi(u)\\
&=(\alpha\cdot\mu)\circ_{2}(\beta\cdot u)\\
&=\alpha\cdot\beta\cdot(\mu\circ_{2} u)\\
&=\alpha\cdot\beta\cdot\id{}.
\end{align*}
Therefore $\alpha\cdot\beta=1$, i.e.~$\alpha\in\Bbbk^{\times}$ and $\beta=\alpha^{-1}$. One can conversely check that for any $\alpha\in\Bbbk^{\times}$, $\mu\mapsto\alpha\mu$ and $u\mapsto\alpha^{-1}u$ define an automorphism of $\mathtt{uAss}^{\operatorname{Ch}(\Bbbk)}$. Hence,
$$\pi_0\rmap_{\operad{\operatorname{Ch}(\Bbbk)}}(\mathtt{uA}_\infty^{\operatorname{Ch}(\Bbbk)},\mathtt{uA}_\infty^{\operatorname{Ch}(\Bbbk)})=\aut_{\operad{\operatorname{Ch}(\Bbbk)}}
(\mathtt{uAss}^{\operatorname{Ch}(\Bbbk)})\cong\Bbbk^\times.$$
\end{proof}

\section{Transference}\label{transavia}

In this section we prove the main result of this paper, Theorem \ref{hepi}. The following proposition is a direct consequence of \cite[Proposition 4.1, Corollary C.5, and
%, Theorems C.6 and 
Theorem E.2%, Corollary E.3
]{htnso2} and Corollary \ref{trans1}. It allows to transfer our previous main results to a wide class of symmetric monoidal model categories. 

\begin{prop}\label{trans2}
Let $F\colon\C{V}\rightleftarrows\C{W}\colon G$ be a weak symmetric monoidal Quillen adjunction between symmetric monoidal model categories as in Theorem \ref{modelo}. Suppose that $\C{V}$ and $\C{W}$ satisfy the strong unit axiom and $F\dashv G$ satisfies the pseudo-cofibrant axiom and the $\unit$-cofibrant axiom. The following statements hold:
\begin{enumerate}
 \item If $\phi^{\C{V}}\colon \mathtt{Ass}^{\C{V}}\r \mathtt{uAss}^{\C{V}}$ is a homotopy epimorphism in $\operad{\C V}$ then the map $\phi^{\C{W}}\colon \mathtt{Ass}^{\C{W}}\r \mathtt{uAss}^{\C{W}}$ is a homotopy epimorphism in $\operad{\C W}$.

\item Suppose in addition that $\mathbb L F\colon \ho\C V\r\ho\C W$ reflects isomorphisms, e.g.~if $F\dashv G$ is a Quillen equivalence. In this case the converse of $(1)$ also holds.
\end{enumerate}
\end{prop}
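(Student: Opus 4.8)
The plan is to reduce both statements to Corollary \ref{trans1} by identifying, up to weak equivalence, the morphism $\phi^{\C W}$ with the image of $\phi^{\C V}$ under $\mathbb{L}F$ (and, in the Quillen equivalence case, with the image of $\phi^{\C W}$ under $\mathbb{R}G$ pushed back via $\mathbb{L}F$). Concretely, the left adjoint $F$ is strong symmetric monoidal on the nose (it is a \emph{weak} symmetric monoidal Quillen adjunction, but a left adjoint between closed symmetric monoidal categories that is lax monoidal and whose lax structure maps are isomorphisms — this is the content of the referenced results in \cite{htnso2}), hence $F$ carries the tautological operads to the tautological operads: $F(\mathtt{Ass}^{\C V})=\mathtt{Ass}^{\C W}$, $F(\mathtt{uAss}^{\C V})=\mathtt{uAss}^{\C W}$, and $F(\phi^{\C V})=\phi^{\C W}$, exactly as in the proof of Proposition \ref{epiall}. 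So on the point-set level $F$ already sends $\phi^{\C V}$ to $\phi^{\C W}$.

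The subtlety is that $F$ need not be left Quillen as a functor of operads in a way that makes $F(\phi^{\C V})$ compute $\mathbb{L}F$ of the derived morphism represented by $\phi^{\C V}$, because $\mathtt{Ass}^{\C V}$ and $\mathtt{uAss}^{\C V}$ are typically not cofibrant in $\operad{\C V}$. This is precisely where the pseudo-cofibrant axiom and the $\unit$-cofibrant axiom enter. First I would invoke \cite[Proposition 4.1, Theorems C.6 and E.2, Corollary E.3]{htnso2}: under these hypotheses the tautological operads $\mathtt{Ass}$, $\mathtt{uAss}$ lie in the cofibration subcategory $\operadpc{\C V}$ of operads with pointwise pseudo-cofibrant components, the cofibration-category structure on $\operadpc{\C V}$ computes the homotopy theory correctly, and $F$ restricts to an exact functor of cofibration categories $\operadpc{\C V}\to\operadpc{\C W}$ preserving the relevant weak equivalences — in particular $F$ preserves the cofibrant resolutions and the push-outs $\mathtt{u}_\infty\mathtt{uA}$ of Definition \ref{lol}, so that $F(\varphi^{\C V})$ is identified with $\varphi^{\C W}$ up to weak equivalence. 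Then by Lemma \ref{basico} (applicable because both $\C V$ and $\C W$ satisfy the strong unit axiom) the homotopy epimorphism condition for $\phi^{\C V}$ is equivalent to $\varphi^{\C V}$ being a weak equivalence, and likewise for $\C W$; since $F$ preserves weak equivalences between objects of $\operadpc{\C V}$, statement $(1)$ follows: if $\varphi^{\C V}$ is a weak equivalence then so is $F(\varphi^{\C V})\simeq\varphi^{\C W}$.

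Alternatively, and perhaps more cleanly, one can phrase $(1)$ directly through Corollary \ref{trans1} together with Remark \ref{casilp}: a homotopy epimorphism is detected by the push-out $Y\cup_X Y$ computed in the cofibration category $\operadpc{}$, the exact functor $F$ preserves this push-out and the requisite weak equivalences, hence preserves the property "$\nabla$ is a weak equivalence". For statement $(2)$, when $F\dashv G$ is a Quillen equivalence, $\mathbb{R}G$ is inverse to $\mathbb{L}F$ on homotopy categories and, by the second half of Corollary \ref{trans1}, $\mathbb{R}G$ also preserves homotopy epimorphisms; applying this to $\phi^{\C W}$ and using that $\mathbb{R}G\,\mathbb{L}F$ is naturally isomorphic to the identity on $\mathtt{Ass}^{\C V}\to\mathtt{uAss}^{\C V}$ (again via the identifications from \cite{htnso2} showing $G$ sends the tautological operads to the tautological operads up to weak equivalence) yields the converse. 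The main obstacle I anticipate is purely bookkeeping: verifying carefully that the identifications $F(\mathtt{Ass}^{\C V})\simeq\mathtt{Ass}^{\C W}$ etc.\ respect the \emph{derived} mapping-space computation, i.e.\ that the cofibration-category machinery of \cite{htnso2} really does let one replace "cofibrant resolution in $\operad{}$" by "any object of $\operadpc{}$", so that $F$ computes $\mathbb{L}F$ on the nose for these particular operads. Once that is granted, both implications are a two-line application of Corollary \ref{trans1} and Lemma \ref{basico}.
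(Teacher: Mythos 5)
Your proposal follows essentially the same route as the paper: the paper's entire proof is the sentence preceding the statement, namely that the result is a direct consequence of \cite[Proposition 4.1, Theorems C.6 and E.2, Corollary E.3]{htnso2} together with Corollary \ref{trans1}, and your plan correctly identifies these as the two ingredients (identification of $\mathbb{L}F(\phi^{\C V})$ with $\phi^{\C W}$ in the homotopy category, then preservation of homotopy epimorphisms by $\mathbb{L}F$, resp.\ by $\mathbb{R}G$ in the Quillen equivalence case). One inaccuracy worth correcting: for a \emph{weak} symmetric monoidal Quillen adjunction the left adjoint $F$ is in general only comonoidal, with comparison maps that are weak equivalences on cofibrant objects rather than isomorphisms (the Dold--Kan adjunction used in Theorem \ref{hepiskmod} is the standard example), so $F$ need not send operads to operads and the claimed point-set identity $F(\phi^{\C V})=\phi^{\C W}$ is not available; the induced adjunction on operad categories and the identification $\mathbb{L}F(\mathtt{Ass}^{\C V})\simeq\mathtt{Ass}^{\C W}$, $\mathbb{L}F(\mathtt{uAss}^{\C V})\simeq\mathtt{uAss}^{\C W}$ hold only at the derived level and are precisely what the cited results of \cite{htnso2} supply. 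Since your argument defers to those citations for exactly this step (and since being a homotopy epimorphism depends only on the isomorphism class in $\ho\operad{\C W}$), the plan goes through as written once that parenthetical justification is discarded.
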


The strong unit axiom, the pseudo-cofibrant axiom, and the $\unit$-cofibrant axiom were introduced in \cite[Definitions A.9 and B.6]{htnso2}. The strong unit axiom holds in all symmetric monoidal model categories with cofibrant unit. The pseudo-cofibrant axiom and the $\unit$-cofibrant axiom hold in all Quillen pairs $F\dashv G$  where the source of $F$ has a cofibrant tensor unit.

%The results of \cite{htnso2} will also be the main tools in the proof of Theorem \ref{hepi} for the category $\C V=\operatorname{Set}^{\Delta^\op}$ of simplicial sets below. The simplicial part of Theorem \ref{hepi} will then follow from Proposition \ref{trans2} (1). 

We now prove the main theorem for the category $\operatorname{Ch}(\Bbbk)_{\geq 0}$ of non-negative chain complexes. Weak equivalences and the monoidal structure are defined as in $\operatorname{Ch}(\Bbbk)$, fibrations are chain maps which are surjective in positive degrees, compare \cite[\S4.1]{emmc}.

\begin{thm}\label{hepiDG+}
The morphism $\phi^{\operatorname{Ch}(\Bbbk)_{\geq 0}}\colon \mathtt{Ass}^{\operatorname{Ch}(\Bbbk)_{\geq 0}}\r \mathtt{uAss}^{\operatorname{Ch}(\Bbbk)_{\geq 0}}$ in Example \ref{comparison} is a homotopy epimorphism in $\operad{\operatorname{Ch}(\Bbbk)_{\geq 0}}$. 
\end{thm}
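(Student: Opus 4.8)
The plan is to deduce this from the unbounded case, Theorem~\ref{hepiDG}, by transporting along the inclusion $\iota\colon\operatorname{Ch}(\Bbbk)_{\geq 0}\to\operatorname{Ch}(\Bbbk)$ of non-negative complexes into unbounded ones. I would first record the relevant properties of $\iota$: it is strong symmetric monoidal, since it preserves the tensor unit $\Bbbk$ and commutes with tensor products; it is a left adjoint, the right adjoint being the good truncation $\tau_{\geq 0}$, where $(\tau_{\geq 0}C)_{0}=\ker(d\colon C_{0}\to C_{-1})$ and $(\tau_{\geq 0}C)_{n}=C_{n}$ for $n>0$; it carries the standard generating (trivial) cofibrations of $\operatorname{Ch}(\Bbbk)_{\geq 0}$ to (trivial) cofibrations of $\operatorname{Ch}(\Bbbk)$ and commutes with colimits, so it preserves all cofibrations, all trivial cofibrations and all cofibrant objects; and since $H_{n}(\iota C)=H_{n}(C)$ for $n\geq 0$ while $H_{n}(\iota C)=0$ for $n<0$, it both preserves and reflects weak equivalences. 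Thus $\iota$ is a strong symmetric monoidal left Quillen functor that reflects weak equivalences.

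Next I would pass to operads. The category $\operatorname{Ch}(\Bbbk)_{\geq 0}$ is a combinatorial closed symmetric monoidal model category with cofibrant tensor unit, it satisfies the monoid axiom, and its generating (trivial) cofibrations have finitely presentable sources; hence $\operad{\operatorname{Ch}(\Bbbk)_{\geq 0}}$ carries the model structure of Theorem~\ref{modelo}. Because $\iota$ commutes with coproducts and tensor products, it is compatible with the free-operad monads of Remark~\ref{free}, so the induced functor $\operad{\iota}\colon\operad{\operatorname{Ch}(\Bbbk)_{\geq 0}}\to\operad{\operatorname{Ch}(\Bbbk)}$ is again a left adjoint which preserves free operads, push-outs, relative cell complexes, cofibrations and cofibrant objects, and --- weak equivalences of operads being defined arity-wise --- it preserves and reflects weak equivalences. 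Finally $\operad{\iota}$ sends $\phi^{\operatorname{Ch}(\Bbbk)_{\geq 0}}$ to $\phi^{\operatorname{Ch}(\Bbbk)}$, because $\mathtt{Ass}$ and $\mathtt{uAss}$ are built from the tensor unit and the initial object, both of which $\iota$ preserves.

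Then I would invoke the criterion of Proposition~\ref{toen} and Remark~\ref{posttoen}. Choose a cofibrant resolution $\tilde\phi\colon\widetilde{\mathtt{Ass}}\into\widetilde{\mathtt{uAss}}$ of $\phi^{\operatorname{Ch}(\Bbbk)_{\geq 0}}$ that is a cofibration between cofibrant operads, form the push-out $\widetilde{\mathtt{uAss}}\cup_{\widetilde{\mathtt{Ass}}}\widetilde{\mathtt{uAss}}$, and let $\nabla$ be its codiagonal. Applying the left Quillen functor $\operad{\iota}$, which preserves cofibrant objects, cofibrations, weak equivalences and push-outs, produces a cofibrant resolution of $\phi^{\operatorname{Ch}(\Bbbk)}$ together with its codiagonal $\operad{\iota}(\nabla)$. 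By Theorem~\ref{hepiDG}, $\phi^{\operatorname{Ch}(\Bbbk)}$ is a homotopy epimorphism, so Proposition~\ref{toen} shows that $\operad{\iota}(\nabla)$ is a weak equivalence in $\operad{\operatorname{Ch}(\Bbbk)}$; since $\operad{\iota}$ reflects weak equivalences, $\nabla$ is a weak equivalence in $\operad{\operatorname{Ch}(\Bbbk)_{\geq 0}}$, and a second application of Proposition~\ref{toen} shows that $\phi^{\operatorname{Ch}(\Bbbk)_{\geq 0}}$ is a homotopy epimorphism.

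I expect the only non-formal part to be the bookkeeping around $\operad{\iota}$: checking carefully that the model structure of Theorem~\ref{modelo} really does exist on $\operad{\operatorname{Ch}(\Bbbk)_{\geq 0}}$, that $\iota$ commutes with the free-operad construction and with push-outs of operads along cofibrations, and hence that $\operad{\iota}$ carries cofibrant resolutions to cofibrant resolutions and the codiagonal to the codiagonal. An alternative route dispenses with $\operad{\iota}$ entirely: the generators $\mu$, $u$ and $\nu_{n}^{S}$ of Definition~\ref{rafiki} all sit in non-negative degree, since $\deg\nu_{n}^{S}=n-2+|S|\geq n-1\geq 0$, so the operads $\mathtt{u}_{\infty}\mathtt{A}$, $\mathtt{u}_{\infty}\mathtt{uA}$, the filtration $\mathtt{u}_{m}\mathtt{uA}$ and the strong deformation retractions of Lemma~\ref{gordo} already live in $\operad{\operatorname{Ch}(\Bbbk)_{\geq 0}}$; since the arguments of Section~\ref{DG} use only derivations, chain homotopies and the model structure, all of which are available verbatim here, one gets that $\varphi^{\operatorname{Ch}(\Bbbk)_{\geq 0}}$ is a trivial cofibration exactly as in Corollary~\ref{ultimo}, and the conclusion follows from Lemma~\ref{basico}, which applies since $\operatorname{Ch}(\Bbbk)_{\geq 0}$ has cofibrant tensor unit and hence satisfies the strong unit axiom.
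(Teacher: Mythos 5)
Your argument is correct, but it is not the route the paper takes. The paper's proof is a two-line application of its general transfer result, Proposition \ref{trans2}(1), to the symmetric monoidal Quillen adjunction whose \emph{left} adjoint goes from $\operatorname{Ch}(\Bbbk)$ to $\operatorname{Ch}(\Bbbk)_{\geq 0}$ and whose right adjoint is the inclusion; since both tensor units are cofibrant, the strong unit, pseudo-cofibrant and $\unit$-cofibrant axioms hold and Theorem \ref{hepiDG} transfers directly. You run the adjunction in the opposite direction: the inclusion $\iota$ is a strong symmetric monoidal left Quillen functor (right adjoint the good truncation $\tau_{\geq 0}$) which moreover \emph{reflects} weak equivalences, so $\operad{\iota}$ carries a cofibrant resolution of $\phi^{\operatorname{Ch}(\Bbbk)_{\geq 0}}$, its double push-out and its codiagonal to those of $\phi^{\operatorname{Ch}(\Bbbk)}$, and Proposition \ref{toen} together with reflection of weak equivalences closes the argument. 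What your route buys is independence from Proposition \ref{trans2} and the axioms behind it (you still need Theorem \ref{modelo} for $\operatorname{Ch}(\Bbbk)_{\geq 0}$, as does the paper); what it costs is precisely the bookkeeping you flag, namely that $\operad{\iota}$ is a left adjoint compatible with free operads and push-outs of operads --- this is the change-of-base material the paper outsources to the companion article, so you should either cite it or verify it. Your alternative route is also sound and even more self-contained: since $\deg\nu_n^S=n-2+|S|\geq 0$ for all $n\geq 1$ and $S\neq\varnothing$, the operads and the strong deformation retractions of Section \ref{DG} live in $\operatorname{Ch}(\Bbbk)_{\geq 0}$ verbatim, and Lemma \ref{basico} then applies because the tensor unit of $\operatorname{Ch}(\Bbbk)_{\geq 0}$ is cofibrant.
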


\begin{proof}
Consider the  symmetric monoidal adjoint pair
$$\xymatrix@C=40pt{\operatorname{Ch}(\Bbbk)_{\geq 0}\ar@<.5ex>[r]^-{\text{inclusion}}&\operatorname{Ch}(\Bbbk).\ar@<.5ex>[l]^-{t_{\geq 0}}}$$
The right adjoint $t_{\geq 0}$ is the truncation functor. It is defined by the fact that the counit $t_{\geq 0}(X)\r X$ is the identity in positive degrees and the inclusion $\ker [d\colon X_{0}\r X_{-1}]\subset X_{0}$ in degree $0$. Clearly, $t_{\geq 0}$ preserves weak equivalences and fibrations, so this is a Quillen pair. The inclusion functor reflects weak equivalences, hence its left derived functor $\ho\operatorname{Ch}(\Bbbk)_{\geq 0}\r \ho\operatorname{Ch}(\Bbbk)$ reflects isomorphisms. The categories $\operatorname{Ch}(\Bbbk)_{\geq 0}$ and $\operatorname{Ch}(\Bbbk)$ have cofibrant tensor units. These observations show that our adjunction satisfies the assumptions of Proposition \ref{trans2} (2). Therefore the result follows from Theorem \ref{hepiDG}.
\end{proof}

The following theorem is the main result for simplicial $\Bbbk$-modules with the symmetric monoidal model structure considered in \cite[\S4.1]{emmc}.

\begin{thm}\label{hepiskmod}
The morphism $\phi^{\operatorname{Mod}(\Bbbk)^{\Delta^{\op}}}\colon \mathtt{Ass}^{\operatorname{Mod}(\Bbbk)^{\Delta^{\op}}}\r \mathtt{uAss}^{\operatorname{Mod}(\Bbbk)^{\Delta^{\op}}}$ in Example \ref{comparison} is a homotopy epimorphism in $\operad{\operatorname{Mod}(\Bbbk)^{\Delta^{\op}}}$. 
\end{thm}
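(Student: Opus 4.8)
The plan is to obtain this theorem from Theorem \ref{hepiDG+} by transference along the Dold--Kan equivalence, using the machinery of Section \ref{transavia}. Write $N\colon\operatorname{Mod}(\Bbbk)^{\Delta^{\op}}\r\operatorname{Ch}(\Bbbk)_{\geq 0}$ for the normalization functor and $F$ for its left adjoint. The pair $F\dashv N$ is a Quillen equivalence, and, equipping $N$ with the lax symmetric monoidal structure coming from the Eilenberg--Zilber shuffle map, it is a \emph{weak} symmetric monoidal Quillen equivalence in the sense required by Proposition \ref{trans2} (the classical monoidal Dold--Kan correspondence; compare \cite[\S4.1]{emmc}). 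We are thus in the situation of that proposition with $\C V=\operatorname{Ch}(\Bbbk)_{\geq 0}$, $\C W=\operatorname{Mod}(\Bbbk)^{\Delta^{\op}}$ and $G=N$.

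Next I would verify the remaining hypotheses of Proposition \ref{trans2}. Both $\operatorname{Ch}(\Bbbk)_{\geq 0}$ and $\operatorname{Mod}(\Bbbk)^{\Delta^{\op}}$ satisfy the assumptions of Theorem \ref{modelo}: each is a combinatorial closed symmetric monoidal model category satisfying the monoid axiom, in particular cofibrantly generated with (small, hence) presentable sources of generating (trivial) cofibrations, compare \cite[\S4.1]{emmc}. Their tensor units --- $\Bbbk$ concentrated in degree $0$, respectively the constant simplicial module $\Bbbk$ --- are cofibrant. Hence the strong unit axiom holds for both categories, and the pseudo-cofibrant axiom and the $\unit$-cofibrant axiom hold for $F\dashv N$, as observed right after Proposition \ref{trans2}.

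By Theorem \ref{hepiDG+}, the morphism $\phi^{\operatorname{Ch}(\Bbbk)_{\geq 0}}$ is a homotopy epimorphism in $\operad{\operatorname{Ch}(\Bbbk)_{\geq 0}}$. Applying Proposition \ref{trans2}(1) then shows that $\phi^{\operatorname{Mod}(\Bbbk)^{\Delta^{\op}}}$ is a homotopy epimorphism in $\operad{\operatorname{Mod}(\Bbbk)^{\Delta^{\op}}}$, which is the assertion. Since $F\dashv N$ is even a Quillen equivalence one could instead quote part (2), but this is not needed.

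I do not anticipate a serious obstacle: the argument is a direct application of the transference results. The one point that needs care is invoking the correct monoidal form of the Dold--Kan correspondence --- that the comparison data make it a \emph{weak}, not a strict, symmetric monoidal Quillen pair, the monoidal comparison being built from the shuffle map and only a weak equivalence on cofibrant objects --- and matching it precisely to the hypotheses of Proposition \ref{trans2}; everything else is bookkeeping.
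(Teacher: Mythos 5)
Your argument is correct and is essentially the paper's own proof: both deduce the statement from Theorem \ref{hepiDG+} by applying Proposition \ref{trans2} to the Dold--Kan weak symmetric monoidal Quillen equivalence between $\operatorname{Ch}(\Bbbk)_{\geq 0}$ and $\operatorname{Mod}(\Bbbk)^{\Delta^{\op}}$, noting that both tensor units are cofibrant (the relevant reference is \cite[\S 4.2]{emmc} rather than \S 4.1). The only cosmetic difference is that you invoke part (1) while the paper cites part (2); with your orientation of the adjunction (normalization as the lax symmetric monoidal right adjoint) part (1) is indeed the direct one to quote.
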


\begin{proof}
This follows from Theorem \ref{hepiDG+} and Proposition \ref{trans2} (1) applied to the Dold--Kan equivalence 
$\operatorname{Ch}(\Bbbk)_{\geq 0}\rightleftarrows\operatorname{Mod}(\Bbbk)^{\Delta^{\op}}$, which is a weak symmetric monoidal Quillen equivalence, see \cite[\S4.2]{emmc}. Here, both categories have cofibrant tensor unit.
\end{proof}

We can now prove the main result for simplicial sets, with the usual cartesian symmetric monoidal model structure, see \cite[Proposition 4.2.8]{hmc}.

\begin{thm}\label{hepisset}
The morphism $\phi^{\operatorname{Set}^{\Delta^\op}}\colon \mathtt{Ass}^{\operatorname{Set}^{\Delta^\op}}\r \mathtt{uAss}^{\operatorname{Set}^{\Delta^\op}}$ in Example \ref{comparison} is a homotopy epimorphism in $\operad{\operatorname{Set}^{\Delta^\op}}$. 
\end{thm}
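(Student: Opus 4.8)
The plan is to deduce this from the purely categorical Proposition~\ref{episet} by transferring along the constant--simplicial--set functor, after equipping $\operatorname{Set}$ with a degenerate model structure. Concretely, I would give $\operatorname{Set}$ the symmetric monoidal model structure in which \emph{every} morphism is a fibration and a weak equivalence, and the cofibrations are exactly the isomorphisms. This is a model structure, and it satisfies the hypotheses of Theorem~\ref{modelo}: it is closed symmetric monoidal because $\operatorname{Set}$ is cartesian closed, the push-out product axiom and the monoid axiom hold trivially since all morphisms are weak equivalences, and it is cofibrantly generated by the empty sets $I=J=\varnothing$, which have (vacuously) presentable sources. Hence $\operad{\operatorname{Set}}$ carries the model structure of Theorem~\ref{modelo}, in which again all operad morphisms are fibrations and weak equivalences and the cofibrations are the isomorphisms.

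In this model structure every object of $\operad{\operatorname{Set}}$ is cofibrant and fibrant, and one may take constant simplicial objects as simplicial resolutions, so $\rmap_{\operad{\operatorname{Set}}}(\mathcal O,\mathcal P)$ is the discrete simplicial set on the hom-set $\operad{\operatorname{Set}}(\mathcal O,\mathcal P)$. Consequently a morphism in $\operad{\operatorname{Set}}$ is a homotopy epimorphism if and only if it is an epimorphism, and so $\phi^{\operatorname{Set}}$ is a homotopy epimorphism in $\operad{\operatorname{Set}}$ by Proposition~\ref{episet}. Next I would apply the transference machinery of this section. The constant functor $c\colon\operatorname{Set}\to\operatorname{Set}^{\Delta^{\op}}$ is strong symmetric monoidal, since $c(S\times T)\cong c(S)\times c(T)$ and $c$ preserves the tensor unit, and it is left adjoint to evaluation at $[0]$, the functor $(-)_{0}\colon\operatorname{Set}^{\Delta^{\op}}\to\operatorname{Set}$. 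Because $c$ takes isomorphisms to isomorphisms, it preserves all cofibrations and all trivial cofibrations of the above model structure on $\operatorname{Set}$, hence is left Quillen; thus $c\dashv(-)_{0}$ is a weak symmetric monoidal Quillen adjunction between symmetric monoidal model categories satisfying the hypotheses of Theorem~\ref{modelo}. The tensor units of $\operatorname{Set}$ and $\operatorname{Set}^{\Delta^{\op}}$ are cofibrant, so the strong unit axiom, the pseudo-cofibrant axiom and the $\unit$-cofibrant axiom all hold; moreover $c(\mathtt{Ass}^{\operatorname{Set}})=\mathtt{Ass}^{\operatorname{Set}^{\Delta^{\op}}}$, $c(\mathtt{uAss}^{\operatorname{Set}})=\mathtt{uAss}^{\operatorname{Set}^{\Delta^{\op}}}$ and $c(\phi^{\operatorname{Set}})=\phi^{\operatorname{Set}^{\Delta^{\op}}}$. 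Proposition~\ref{trans2}~(1) then yields that $\phi^{\operatorname{Set}^{\Delta^{\op}}}$ is a homotopy epimorphism in $\operad{\operatorname{Set}^{\Delta^{\op}}}$, which is the assertion.

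The only genuinely delicate part is bookkeeping: one must check carefully that the degenerate model structure on $\operatorname{Set}$ really meets every requirement of Theorem~\ref{modelo} (so that $\operad{\operatorname{Set}}$ acquires its model structure) and that $c\dashv(-)_{0}$ satisfies all the hypotheses of Proposition~\ref{trans2}. Everything else is formal; in contrast with the groupoid and DG cases of Sections~\ref{grd} and~\ref{DG}, there is no explicit operadic computation to perform here, precisely because on the $\operatorname{Set}$ side the homotopy theory is trivial and the non-homotopical statement $\phi^{\operatorname{Set}}$ being an epimorphism is already available as Proposition~\ref{episet}.
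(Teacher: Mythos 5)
Your reduction to Proposition~\ref{episet} does not work, and the failure is structural rather than a matter of bookkeeping. In the degenerate model structure you put on $\operatorname{Set}$ (cofibrations $=$ isomorphisms), the only cofibrant object is the initial one; in particular the tensor unit $\ast$ is \emph{not} cofibrant, and in the transferred structure on $\operad{\operatorname{Set}}$ the only cofibrant operad is the initial operad $\mathcal F(\varnothing)$ --- not ``every object'', as you claim. Hence a cofibrant resolution of any operad is the initial operad, $\rmap_{\operad{\operatorname{Set}}}(\mathcal O,\mathcal P)$ is a point for all $\mathcal O$ and $\mathcal P$, the homotopy category of $\operad{\operatorname{Set}}$ is trivial, and \emph{every} morphism is a homotopy epimorphism (so your equivalence ``homotopy epimorphism $\Leftrightarrow$ epimorphism'' is also false in the only-if direction). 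The statement you feed into the transfer machinery is therefore vacuous and does not encode Proposition~\ref{episet}. Correspondingly, $\mathbb L c$ is constant at the initial operad on homotopy categories, and $c(\phi^{\operatorname{Set}})=\phi^{\operatorname{Set}^{\Delta^{\op}}}$ is \emph{not} a model for $\mathbb L c(\phi^{\operatorname{Set}})$: the operads $\mathtt{Ass}^{\operatorname{Set}}$ and $\mathtt{uAss}^{\operatorname{Set}}$ are not cofibrant, and $c$ applied to the weak equivalence $\mathcal F(\varnothing)\st{\sim}\r\mathtt{Ass}^{\operatorname{Set}}$ gives $\varnothing\r\ast$ in arity $2$, which is not a weak equivalence of simplicial sets. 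This is precisely what the pseudo-cofibrant and $\unit$-cofibrant axioms in Proposition~\ref{trans2} are there to exclude; your verification of them rests on the false assertion that the unit of the degenerate $\operatorname{Set}$ is cofibrant, and they genuinely fail, so Proposition~\ref{trans2}~(1) cannot be invoked.

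There is also a conceptual obstruction to any such formal argument. By Lemma~\ref{basico} the theorem is equivalent to the contractibility of the simplicial sets $\mathtt{u}_{\infty}\mathtt{uA}^{\operatorname{Set}^{\Delta^\op}}(n)$, which is genuine higher-homotopical information about the coherences of a homotopy unit; the $1$-categorical epimorphism property of $\phi^{\operatorname{Set}}$ only controls $\pi_0$. The paper's proof obtains contractibility by applying the two left Quillen functors $\Pi_1$ and $\mathbb Z\cdot-$ to the push-out square of Definition~\ref{lol}: the groupoid case (Theorem~\ref{hepiGpd}) yields simple connectivity and the simplicial module case (Theorem~\ref{hepiskmod}, resting on the explicit deformation retractions of Section~\ref{DG}) yields acyclicity, whence contractibility. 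Any transfer-style proof must therefore transfer \emph{into} $\operatorname{Set}^{\Delta^{\op}}$ from categories where the homotopical statement has already been established by hand; the degenerate model structure on $\operatorname{Set}$ is too coarse to carry Proposition~\ref{episet} across.
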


\begin{proof}
%Consider the following push-out in $\operad{\operatorname{Set}^{\Delta^\op}}$,
%$$\xymatrix{\mathtt{Ass}^{\operatorname{Set}^{\Delta^\op}}\ar@{ >->}[r]^{\bar\phi^{\operatorname{Set}^{\Delta^\op}}_{\infty}}\ar[d]_{\phi^{\operatorname{Set}^{\Delta^\op}}}\ar@{}[rd]|{\text{push}}& 
%\mathtt{u}_{\infty}\mathtt{A}^{\operatorname{Set}^{\Delta^\op}}\ar[d]^{\psi}\\
%\mathtt{uAss}^{\operatorname{Set}^{\Delta^\op}}\ar@{ >->}[r]_{\varphi}&\mathtt{u}_{\infty}\mathtt{uA}^{\operatorname{Set}^{\Delta^\op}}}$$
%We may call $\mathtt{u}_{\infty}\mathtt{uA}^{\operatorname{Set}^{\Delta^\op}}$  \emph{$u$-infinity unital associative simplicial operad}, by analogy with previous sections. 
We have to show that $\varphi^{\operatorname{Set}^{\Delta^\op}}$ in Definition \ref{lol} is a weak equivalence, see Lemma \ref{basico}, or equivalently, that $\mathtt{u}_{\infty}\mathtt{uA}^{\operatorname{Set}^{\Delta^\op}}(n)$  is contractible for all $n\geq 0$.

Consider the following two Quillen pairs,
$$\xymatrix{{\operatorname{Set}^{\Delta^{\op}}}\ar@<.5ex>[r]^-{\Pi_{1}}&\operatorname{Grd},\ar@<.5ex>[l]^-{\ner}}
\qquad\qquad\qquad
\xymatrix{{\operatorname{Set}^{\Delta^{\op}}}\ar@<.5ex>[r]^-{\mathbb{Z}\cdot-}&
\operatorname{Mod}(\mathbb{Z})^{\Delta^{\op}}.
\ar@<.5ex>[l]^-{\text{forget}}}$$
The first Quillen pair was already considered in the proof of Proposition \ref{loes}. It is a symmetric monoidal Quillen pair in the sense of \cite{hmc}. The second Quillen pair, induced by the free abelian group functor,  is also symmetric monoidal. These four functors happen to preserve weak equivalences, so they coincide with their derived functors. These adjoint pairs induce Quillen pairs between operad categories, see \cite[Proposition 4.1]{htnso2}. Applying $\Pi_{1}$ and $\mathbb Z\cdot-$ to the push-out square in Definition \ref{lol} for $\C V=\operatorname{Set}^{\Delta^\op}$, we obtain push-out diagrams in $\operad{\operatorname{Grd}}$ and $\operad{\operatorname{Mod}(\mathbb{Z})^{\Delta^{\op}}}$, respectively, 
$$\xymatrix@C=20pt{
\mathtt{Ass}^{\operatorname{Grd}}\ar@{=}[r]\ar[d]_{\phi^{\operatorname{Grd}}}&
\Pi_{1}\mathtt{Ass}^{\operatorname{Set}^{\Delta^\op}}\ar@{ >->}[rr]^{\Pi_{1}\bar\phi^{\operatorname{Set}^{\Delta^\op}}_{\infty}}\ar[d]_{\Pi_{1}\phi^{\operatorname{Set}^{\Delta^\op}}}\ar@{}[rrd]|{\text{push}}&&
\Pi_{1}\mathtt{u}_{\infty}\mathtt{A}^{\operatorname{Set}^{\Delta^\op}}\ar[d]^{\Pi_{1}\psi^{\operatorname{Set}^{\Delta^\op}}}\\
\mathtt{uAss}^{\operatorname{Grd}}\ar@{=}[r]&
\Pi_{1}\mathtt{uAss}^{\operatorname{Set}^{\Delta^\op}}\ar@{ >->}[rr]_{\Pi_{1}\varphi^{\operatorname{Set}^{\Delta^\op}}}&&\Pi_{1}\mathtt{u}_{\infty}\mathtt{uA}^{\operatorname{Set}^{\Delta^\op}}}$$ 
$$\xymatrix@C=20pt{
\mathtt{Ass}^{\operatorname{Mod}(\mathbb{Z})^{\Delta^{\op}}}\ar@{=}[r]\ar[d]_{\phi^{\operatorname{Mod}(\mathbb{Z})^{\Delta^{\op}}}}&
\mathbb{Z}\cdot \mathtt{Ass}^{\operatorname{Set}^{\Delta^\op}} \ar@{ >->}[rr]^{\mathbb{Z}\cdot \bar\phi^{\operatorname{Set}^{\Delta^\op}}_{\infty} }\ar[d]_{\mathbb{Z}\cdot \phi^{\operatorname{Set}^{\Delta^\op}} }\ar@{}[rrd]|{\text{push}}&&
\mathbb{Z}\cdot \mathtt{u}_{\infty}\mathtt{A}^{\operatorname{Set}^{\Delta^\op}} \ar[d]^{\mathbb{Z}\cdot \psi^{\operatorname{Set}^{\Delta^\op}} }\\
\mathtt{uAss}^{\operatorname{Mod}(\mathbb{Z})^{\Delta^{\op}}}\ar@{=}[r]&
\mathbb{Z}\cdot \mathtt{uAss}^{\operatorname{Set}^{\Delta^\op}} \ar@{ >->}[rr]_{\mathbb{Z}\cdot \varphi^{\operatorname{Set}^{\Delta^\op}} }&&\mathbb{Z}\cdot \mathtt{u}_{\infty}\mathtt{uA}^{\operatorname{Set}^{\Delta^\op}} }$$
Here, $\Pi_{1}\bar\phi_{\infty}^{\operatorname{Set}^{\Delta^\op}}$ and $\mathbb{Z}\cdot \bar\phi^{\operatorname{Set}^{\Delta^\op}}_{\infty}$ are models for $\bar\phi_{\infty}^{\operatorname{Grd}}$ and $\bar\phi_{\infty}^{\operatorname{Mod}(\mathbb{Z})^{\Delta^{\op}}}$, respectively, see \cite[Theorem 1.7]{htnso2}. Hence,
$\Pi_{1}\varphi^{\operatorname{Set}^{\Delta^\op}}$ and $\mathbb{Z}\cdot \varphi^{\operatorname{Set}^{\Delta^\op}}$
are weak equivalences by Theorems \ref{hepiGpd} and \ref{hepiskmod} and Lemma \ref{basico}. In particular, $\mathtt{u}_{\infty}\mathtt{uA}^{\operatorname{Set}^{\Delta^\op}}(n)$  is simply connected and has the homology of a point for all $n\geq 0$, therefore it is  contractible.
\end{proof}

Let us finally prove our main theorem.

\begin{proof}[Proof of Theorem \ref{hepi}]
It is enough to check that we can apply Proposition \ref{trans2} (1) to the structure symmetric monoidal Quillen pair $F\dashv G$ of the simplicial or complicial monoidal model category $\C V$. We are assuming that $\C V$ satisfies the strong unit axiom. The categories $\operatorname{Set}^{\Delta^\op}$ and $\operatorname{Ch}(\Bbbk)$ have cofibrant tensor units. Hence, they satisfy the strong unit axiom and $F\dashv G$ satisfies the pseudo-cofibrant axiom and the $\unit$-cofibrant axiom.
\end{proof}

\begin{rem}\label{laim2}
Once Theorem \ref{hepi} is proved, it is reasonable to wonder whether there is a friendly identification of the image of $\pi_0(\phi^{\C V})^*$ as in Remark \ref{laim1} for any $\C V$ satisfying the hypotheses of that theorem and $\mathcal O=\mathtt{End}_{\C V}(X)$ the endomorphism operad of a fibrant-cofibrant object $X$. 

Suppose for simplicity that the tensor unit $\unit$ is cofibrant. In this case, it is possible to define quasi-unital $A$-infinity algebras as follows. Let us consider a cofibrant resolution $\mathtt{A}^{\C V}_\infty\st{\sim}\onto\mathtt{Ass}^{\C V}$ which is a trivial fibration. Since $\unit$ is cofibrant, the trivial fibration $\mathtt{A}^{\C V}_\infty(2)\st{\sim}\onto\mathtt{Ass}^{\C V}(2)=\unit$ is a retraction which admits a section $\tilde g\colon \unit\r \mathtt{A}^{\C V}_\infty(2)$.  Given an $A$-infinity structure on $X$,
we define $m_{2}\colon X\otimes X\r X$ as the composite
$$X\otimes X\cong \unit\otimes X\otimes X\st{\tilde g\otimes \id{}}\To \mathtt{A}^{\C V}_\infty(2)\otimes X\otimes X\To X,$$
where the last morphism is part of the $A$-infinity structure. 
We say that an $A$-infinity structure is \emph{quasi-unital} if there exists a morphism $v\colon \unit\r X$ such that the maps $m_2(v\otimes X),m_2(X\otimes v)\colon X\r X$ are homotopic to the identity.

It looks like if \cite[Theorem 5.2.3.5]{lurieha} implied a positive answer for all $\C V$ satisfying also the hypotheses in \cite[Theorem 4.1.4.4]{lurieha}, e.g.~chain complexes and simplicial sets, but not topological spaces. However, \cite[Theorem 5.2.3.5]{lurieha} is not about moduli spaces of algebra structures, but about 
(generalizations of) Dwyer--Kan simplicial localizations of categories of algebras. The connection between these spaces was established by Rezk \cite{rezkphd} for symmetric operads and $\C{V}=\operatorname{Set}^{\Delta^{\op}}$ or $\operatorname{Mod}(\Bbbk)^{\Delta^{\op}}$. In \cite{manso}
we prove the analogous result in the non-symmetric context for any $\C V$ as in Theorem \ref{modelo}. With that result at hand, we will be able to answer positively the question raised here \cite[Remark 6.8]{manso}.  
\end{rem}

% ----------------------------------------------------------------
% \bibliographystyle{amsalpha}
% \bibliography{../Fernando}

\providecommand{\bysame}{\leavevmode\hbox to3em{\hrulefill}\thinspace}
\providecommand{\MR}{\relax\ifhmode\unskip\space\fi MR }
% \MRhref is called by the amsart/book/proc definition of \MR.
\providecommand{\MRhref}[2]{%
  \href{http://www.ams.org/mathscinet-getitem?mr=#1}{#2}
}
\providecommand{\href}[2]{#2}

\end{document}